\documentclass{article}
\usepackage{amsmath}
\usepackage{amsfonts}
\usepackage{amssymb}


\usepackage[dvipsnames,svgnames,table]{xcolor}
\usepackage[colorlinks=true,linkcolor=blue,citecolor=ForestGreen]{hyperref}

\usepackage{array}
\usepackage{xspace}

\usepackage{geometry}
\usepackage{amsthm}
\usepackage[capitalise]{cleveref}

\newtheorem{theorem}{Theorem}[section]

\newtheorem{proposition}[theorem]{Proposition}
\newtheorem{corollary}[theorem]{Corollary}
\newtheorem{observation}[theorem]{Observation}
\newtheorem{conjecture}[theorem]{Conjecture}
\newtheorem{lemma}[theorem]{Lemma}
\theoremstyle{definition}
\newtheorem{definition}[theorem]{Definition}
\theoremstyle{remark}

\title{On a Generalisation of a Function of Ron Graham's}
\author{Sarosh Adenwalla }
\date{October 2024}

\begin{document}

\maketitle
\begin{abstract}
Ron Graham introduced a function, $g(n)$, on the non-negative integers, in the 1986 Issue $3$ Problems column of \textit{Mathematical Magazine}: For each non-negative integer $n$, $g(n)$ is the least integer $s$ so that the integers $n + 1, n + 2, \ldots , s-1, s$ contain a subset of integers, the product of whose members with $n$ is a square. Recently, many results about $g(n)$ were proved in [Kagey and Rajesh, ArXiv:2410.04728, 2024] and they conjectured a characterization of which $n$ satisfied $g(n)=2n$. For $m\geq 2$, they also introduced generalizations of $g(n)$ to $m$-th powers to explore. In this paper, we prove their conjecture and provide some results about these generalisations. 
\end{abstract}

\section{Introduction}
For non-negative integers $n$, define $g(n)$ to be the smallest integer $s\geq n$ such that there exist distinct integers $a_1,\ldots,a_t\in [n,s]$ where $a_1=n$ and $\prod_{i=1}^t a_i$ is a square. For example, $g(2)=6$ as $2\cdot 3\cdot 6=6^2$.

The function $g(n)$ was introduced by Ron Graham in \cite{doi:10.1080/0025570X.1986.11977243} who asked whether it was a bijection. This was shown to be true by Michael Reid in \cite{efdab2f9-940e-3325-88c5-7e0949aa0d3d}, where he proved that $g(n)$ was a bijection between $\mathbb{N}$ and $\mathbb{N}\backslash\mathbb{P}$, where we say $0\in\mathbb{N}$ and $\mathbb{P}$ are the primes. 

In \cite{Erdos}, Selfridge proved very strong upper bounds on $g(n)$, and this work led the authors in \cite{MR4706772} to investigate the relation between $g(n)-n$ and the largest prime factor of $n$, denoted $P(n)$. They also provided lower bounds for $g(n)-n$ for non-square $n$. See Section $4$ for further discussion on these bounds.

A survey of known properties of the function is given by Kagey and Rajesh in \cite{kagey2024conjecture}, and further properties are shown therein. The authors made the following conjecture.

\begin{conjecture}[\cite{kagey2024conjecture}]
    For all $n\in\mathbb{N}$, $g(n)=2n$ if and only if:
    \begin{enumerate}
        \item $n>3$ is prime

        \item $n=0$

        \item $n=6$.
    \end{enumerate}
\end{conjecture}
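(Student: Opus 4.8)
The plan is to prove both implications using throughout the standard reformulation via squarefree parts. For a positive integer $k$ let $v_k\in\mathbb{F}_2^{(\mathbb{P})}$ record the parities of the exponents in the prime factorisation of $k$, so that $v_k=0$ exactly when $k$ is a perfect square. A subset $S\subseteq[n,s]$ with $n\in S$ has square product if and only if $\sum_{k\in S}v_k=0$, hence $g(n)\le s$ precisely when $v_n$ lies in the $\mathbb{F}_2$-span of $\{v_m : n<m\le s\}$. In particular $g(n)=n$ whenever $n$ is a perfect square, and $g(n)>n$ otherwise.

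For the ``if'' direction, the cases $n=0$ and $n=6$ are immediate: $g(0)=0$ since the one-element product $0$ is a square, and a short check gives $g(6)=12$ (the prime $3$ divides only $6$ and $9$ in $[6,11]$, with total exponent $3$, so no square is possible below $12$, while $6\cdot 8\cdot 12=24^2$). Now let $n>3$ be prime. The prime $n$ divides no integer in $[n+1,2n-1]$, so any admissible product omitting $2n$ has odd $n$-exponent and cannot be a square; hence $g(n)\ge 2n$. For the reverse inequality I would locate an integer of the form $2j^2$ in the open interval $(n,2n)$, which amounts to an integer $j$ with $n/2<j^2<n$. Since $(\sqrt{n/2},\sqrt n)$ has length $\sqrt n\,(1-1/\sqrt2)>1$ for $n\ge 12$, such $j$ exists for all large $n$, the primes $5,7,11$ being checked by hand ($j=2,2,3$). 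As $n$ is an odd non-square, $2j^2$ differs from both $n$ and $2n$, and $\{n,\,2j^2,\,2n\}$ has product $n\cdot 2j^2\cdot 2n=(2nj)^2$; thus $g(n)=2n$.

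For the ``only if'' direction I must show $g(n)\ne 2n$ for every other $n$; equivalently, for composite non-square $n$ it suffices to exhibit a square product inside $[n,2n-1]$, so that $g(n)<2n$ (the perfect squares give $g(n)=n$, and the values $n\le 7$ are disposed of directly). Writing $n=d\,a^2$ with $d$ squarefree, the key first reduction is that when $a\ge 3$ the single partner $m=d(a+1)^2$ satisfies $n<m<2n$ (because $(a+1)^2<2a^2$) and $v_m=v_d=v_n$, so $nm=(da(a+1))^2$ is already a square; this clears every $n$ whose square part is at least $9$. It remains to treat composite $n$ with $a\in\{1,2\}$, that is $n$ squarefree or $n=4d$, where two or three auxiliary factors are needed. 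The model constructions are: if $n=ab$ with comparable factors $b<2a$, then for an integer $c$ with $b<c<2a$ the triple $\{ab,ac,bc\}$ has product $(abc)^2$; and for $n=2p$ the triple $\{2p,3p,6k^2\}$ works once $6k^2\in(2p,4p)$, giving product $(6pk)^2$. In general I would fix a prime $p\mid n$, cancel it using the next multiple of $p$ in $(n,2n)$, thereby replacing $v_n$ by a strictly smaller squarefree target, and finish with a number of the form $c\,k^2$ ($c$ the residual squarefree part) lying in the interval, organising the remaining work as a case analysis on the factorisation of $n$.

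The main obstacle is uniformity. For most $n$ the interval $(n,2n)$ is long enough that the required completing numbers --- whether of the form $c\,k^2$ for a fixed small squarefree $c$, or a suitable divisor of $n$ times a square --- are guaranteed by an interval-length or counting estimate; but the argument becomes tight exactly when $n$ has a dominant large prime factor, the extreme case being $n=2p$, where one is reduced to finding a number of a prescribed shape in a comparatively short interval. For these I expect to lean on the explicit families above together with the quantitative bounds relating $g(n)-n$ to $P(n)$ from \cite{MR4706772,kagey2024conjecture}, supplemented by a finite computation to settle the small cases that the length estimates leave open. Confirming that these constructions genuinely cover \emph{every} composite non-square $n$, with no sporadic exception masquerading as a second ``$n=6$'', is the crux of the whole proof.
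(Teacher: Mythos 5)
Your ``if'' direction is essentially sound: the lower bound $g(p)\ge 2p$ from the fact that $p$ divides nothing in $(p,2p)$, the completion $\{p,\,2j^2,\,2p\}$ with $p/2<j^2<p$, and the direct checks for $n=0$ and $n=6$ all work (though note the paper simply quotes this half from Kagey--Rajesh; the new content is the other direction). The ``only if'' direction, however, has a genuine gap, and it is the one you yourself flag as ``the crux of the whole proof.'' Your reduction $n=da^2\mapsto m=d(a+1)^2$ correctly disposes of every $n$ whose square part is at least $9$, but the residual cases ($n$ squarefree, or four times a squarefree number) are precisely the hard ones, and the constructions you propose there do not cover them: the triple $\{ab,ac,bc\}$ requires $b<c<2a$ and so fails whenever the two factors of $n$ are far apart (e.g.\ $n=2p$ with $p$ large), while the triple $\{2p,3p,6k^2\}$ requires an integer $k$ with $p/3<k^2<2p/3$, which already fails for $p=5$ and is only guaranteed for large $p$. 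Falling back on the results of \cite{MR4706772} will not rescue this, as those are density statements and lower bounds rather than the uniform upper bound $g(n)<2n$ needed for \emph{every} composite non-square $n$.

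The missing idea is Selfridge's elementary lemma (\cite[Lemma 1]{Erdos}, reproved as Lemma \ref{lemma:ab} in the paper): for \emph{any} factorisation $n=ab$ with $a<b$, the four distinct integers $ab<a(b+1)<(a+1)b<(a+1)(b+1)$ all lie in $[n,(a+1)(b+1)]$ and have product $a^2(a+1)^2b^2(b+1)^2$, a perfect square, whence $g(n)\le (a+1)(b+1)$. Since $(a+1)(b+1)<2ab$ exactly when $(a-1)(b-1)>2$, taking any nontrivial factorisation $1<a\le b<n$ of a composite $n$ yields $g(n)<2n$ except when $(a,b)\in\{(2,2),(2,3)\}$, i.e.\ $n=4$ (a square, so $g(4)=4<8$) or $n=6$ (excluded). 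This single four-term construction needs no interval-length or counting estimates, no case analysis on the shape of $n$, and no external input; combined with $g(1)=1$, $g(2)=6\ne 4$ and $g(3)=8\ne 6$, it closes the ``only if'' direction completely. You should replace your sketched case analysis with this lemma.
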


They proved the `if' direction in their paper. We prove the `only if' direction of the conjecture. They also suggested a generalization of $g(n)$ to $m$-th powers for $m\geq 2$. 

We prove and conjecture many analogous results for these generalized functions, $g_m(n)$. For instance, we prove upper bounds for $g_m(n)$ and conjecture for which $n$ this is tight. We also show that if $m$ is prime, then $g_m(n)$ is a bijection between $\mathbb{N}$ and $\mathbb{N}\backslash\mathbb{P}$. We then prove results supporting our conjecture that, when $m$ is composite, $g_m(n)$ is neither injective nor surjective onto $\mathbb{N}\backslash\mathbb{P}$. Finally we investigate related functions introduced in \cite{kagey2024conjecture}.  

\section{Preliminaries}
Here we define some notation before formally introducing the generalized function $g_m(n)$ for $m\geq 2$ and $n\in\mathbb{N}$. 

Let $\mathbb{P}$ be the set of primes. If we say $a=x\mod m$, then $a\in\{0,1,\ldots,m-1\}$ such that $a=x+km$ for some $k\in \mathbb{Z}$. We also say $p^k||T$ if $p^k|T$ and $p^{k+1}\nmid T$. 

 An integer, $w$, \textit{appears} $s$ times in a sequence $a_1,\ldots,a_t$ if there exist distinct $i_1,\ldots,i_s$ such that $a_{i_j}=w$ for $1\leq j\leq s$ and there is no other $i$ such that $a_i=w$. We say an integer \textit{appears} in a sequence if it appears $s\geq 1$ times in the sequence.

\begin{definition} A sequence of integers, $a_1\leq a_2\leq\ldots\leq a_t$, is a \textit{$m$-product sequence} if there exists $R\in\mathbb{N}$ such that $\prod_{i=1}^t a_i=R^m$ and no integer appears more than $m-1$ times in the sequence.
\end{definition}

\begin{definition} For $n\in\mathbb{N}$, we say \textit{$g_m(n)$} is the least integer $s$ such that there exists an $m$-product sequence starting at $n$ and ending at $s$, that is $$n=a_1\leq\ldots\leq a_t=s.$$
\end{definition}

We call any $m$-product sequence that starts at $n$ and ends with $g_m(n)$ a \textit{corresponding sequence} for $g_m(n)$. If $a_1\leq a_2\leq \dots\leq a_t$ is a sequence where $a_{i+1}=\ldots=a_{i+k}$ then we can write this sequence as $a_1,a_2,\ldots,a_i,a_{i+1}^{\{k\}},a_{i+k+1},\ldots,a_t$.

The existence of a $m$-product sequence starting at $n$ and with largest term $a_t$ provides an upper bound for $g_m(n)$, that is $g_m(n)\leq a_t$.

Note that $g_2(n)=g(n)$, the function studied in \cite{kagey2024conjecture}. The sequence $(g_2(n))_{n=0}^\infty$ is found in the OEIS as A006255 \cite{citation-key}.

It is useful to observe the following: 

\begin{lemma}\label{lemma:m-th power}
    For any $m\geq 2$ and $n\in\mathbb{N}$, $g_m(n)=n$ if and only if $n=k^d$ for some $k,d\in\mathbb{N}$ where $\gcd(d,m)>1$.
\end{lemma}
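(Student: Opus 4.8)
The plan is to prove both directions of the biconditional separately, working with the prime factorisation of $n$ throughout. Write $n=\prod_p p^{e_p}$. The key observation is that $g_m(n)=n$ exactly when the singleton sequence $a_1=n$ is itself an $m$-product sequence, since $g_m(n)$ is the least possible largest term and always satisfies $g_m(n)\geq n$; any sequence ending before $n$ is impossible, and the only sequence with largest term equal to $n$ whose smallest term is $n$ is the one-term sequence $n$ (no integer may appear more than $m-1$ times, and $m\geq 2$, so in principle $n$ could appear up to $m-1$ times, but all such terms equal $n$ and contribute $n^k$; I would note any such multi-term constant sequence reduces to the same divisibility condition). Thus $g_m(n)=n$ iff $n^k=R^m$ for some $R\in\mathbb{N}$ and some $1\leq k\leq m-1$, i.e.\ iff some power $n^k$ with $1\leq k\leq m-1$ is a perfect $m$-th power.

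For the forward direction, I would translate $n^k=R^m$ into the exponent condition: for every prime $p$, $m \mid k\,e_p$. Setting $d=\gcd(\{e_p : e_p>0\})$ (or treating $n=1=1^1$, $\gcd(1,m)$, handling the trivial case separately), I would argue that $n=(\,\prod_p p^{e_p/d}\,)^d = k_0^{\,d}$ where $k_0=\prod_p p^{e_p/d}$, so $n$ is automatically a perfect $d$-th power. The substantive step is to show $\gcd(d,m)>1$. Suppose for contradiction $\gcd(d,m)=1$. Then since $m\mid k e_p$ for all $p$ and $m$ is coprime to each relevant structure, I would show $m\mid k\gcd_p e_p = kd$, hence $m\mid k$ (using $\gcd(d,m)=1$); but $1\leq k\leq m-1$ forces $k<m$, contradicting $m\mid k$ unless $k=0$. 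This yields $\gcd(d,m)>1$, as required.

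For the reverse direction, suppose $n=k^d$ with $\gcd(d,m)>1$. Let $g=\gcd(d,m)>1$ and write $d=g d'$, $m=g m'$. I would exhibit the exponent $k^*=m/g=m'$, which satisfies $1\leq m' \leq m-1$ precisely because $g>1$. Then $n^{m'}=k^{d m'}=k^{g d' m'}$, and since $m=gm'$ we have $k^{dm'}=(k^{d'})^{gm'}=(k^{d'})^{m}$, a perfect $m$-th power. Hence the sequence consisting of $n$ repeated $m'$ times (which is $\leq m-1$ copies) is a valid $m$-product sequence starting and ending at $n$, giving $g_m(n)=n$.

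The main obstacle I anticipate is pinning down the \emph{exact} characterisation of when a constant multiset $\{n,n,\ldots,n\}$ (with at most $m-1$ copies) can be an $m$-product sequence, and confirming that no \emph{non-constant} sequence with largest term $n$ and smallest term $n$ exists — the latter is immediate from the ordering constraint $n=a_1\leq\cdots\leq a_t=s=n$, forcing every term to equal $n$. The slightly delicate number-theoretic point is the clean equivalence ``$\exists k\in[1,m-1]$ with $n^k$ an $m$-th power'' $\iff$ ``$\gcd(d,m)>1$'' where $d$ is the gcd of the exponents; I would make sure to handle the boundary cases $n=0$ and $n=1$ (where the gcd of exponents is vacuous or $n=1^1$ gives $d$ unbounded, so one takes the convention that $1$ is a perfect $m$-th power and the statement holds), and to verify the coprimality argument $m\mid kd \Rightarrow m\mid k$ is applied with the correct gcd rather than an individual exponent.
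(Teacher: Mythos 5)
Your proposal is correct and follows essentially the same route as the paper: both directions reduce to asking when a constant sequence $n^{\{k\}}$ with $1\leq k\leq m-1$ has $n^k$ a perfect $m$-th power, the forward direction deduces $m\mid kd$ (with $d$ the gcd of the exponents of $n$) and derives a contradiction from $\gcd(d,m)=1$, and the reverse direction exhibits the exponent $m/\gcd(d,m)$. The only cosmetic difference is that the paper passes through the divisibility $\frac{m}{\gcd(m,e_i)}\mid j$ for each prime separately before combining, whereas you take the gcd over all exponents at once; these are equivalent.
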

\begin{proof}
    If $g_m(n)=n$, there exists some $1\leq j\leq m-1$ such that $n^{\{j\}}$ is a corresponding sequence for $g_m(n)$. This implies that $n^j$ is a $m$-th power. Let $n=p_1^{e_1}\cdot\ldots\cdot p_t^{e_t}$, then $n^j=p_1^{e_1j}\cdot\ldots\cdot p_t^{e_tj}$ is an $m$-th power. So $m|je_i$ for all $1\leq i\leq t$ and therefore $\frac{m}{\gcd(m,e_i)}|j$. Let $d=\gcd(e_1,\ldots,e_t)$, so $n$ is a $d$-th power. Then it follows that $\frac{m}{\gcd(d,m)}|j$, so if $\gcd(d,m)=1$ then $m|j$ which is not possible. Therefore $\gcd(d,m)>1$.

    If $n=k^d$ where $\gcd(d,m)>1$, then $n^{\frac{m}{\gcd(d,m)}}$ is a $m$-th power. As $1\leq \frac{m}{\gcd(d,m)}\leq m-1$, we see that $n^{\{\frac{m}{\gcd(d,m)}\}}$ is a $m$-product sequence starting at $n$ and ending at $n$. Therefore $g_m(n)=n$.  
\end{proof}

\begin{lemma}\label{lemma:not m-th power}
    For every $n$ that is not an $m$-th power, there is a corresponding sequence for $g_m(n)$ that does not contain any $m$-th powers.
\end{lemma}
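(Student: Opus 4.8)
The plan is to prove Lemma~\ref{lemma:not m-th power} by a replacement/reduction argument: start with \emph{any} corresponding sequence for $g_m(n)$ and show that whenever an $m$-th power appears in it, we can delete or replace those terms to obtain another valid $m$-product sequence with the same endpoints $n$ and $g_m(n)$ but with strictly fewer $m$-th power terms. Iterating drives the count to zero. The key observation is that if $w=k^m$ is an $m$-th power appearing $j$ times (with $1\le j\le m-1$), then $w^j=(k^j)^m$ is already an $m$-th power on its own, so deleting all $j$ copies of $w$ from the product changes $\prod a_i$ only by an $m$-th power factor, and hence the remaining product is still an $m$-th power. Thus I would simply remove every occurrence of every $m$-th power term from the sequence.

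First I would set up the argument carefully. Let $n=a_1\le\cdots\le a_t=g_m(n)$ be a corresponding sequence, guaranteed to exist by the definition of $g_m(n)$ (and $g_m(n)$ is well-defined because $n$ is not an $m$-th power, so $g_m(n)>n$; I should confirm a corresponding sequence exists, which follows from $g_m(n)$ being finite). The crucial point to check is that deleting the $m$-th powers does not disturb the two endpoints. Since $n$ is \emph{not} an $m$-th power, $a_1=n$ is not deleted, so the sequence still starts at $n$. For the top endpoint, I must argue that $a_t=g_m(n)$ is not an $m$-th power either; this is where I would invoke Lemma~\ref{lemma:m-th power} or a direct minimality argument: if $a_t$ were an $m$-th power, then since $n$ is not an $m$-th power we have $a_t>n$, and the terms strictly between give a shorter-reaching sequence, contradicting the minimality of $g_m(n)=a_t$. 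Hence the largest term survives the deletion and the new sequence still ends at $g_m(n)$.

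I would then verify the remaining defining conditions of an $m$-product sequence for the pruned subsequence. The product of the surviving terms equals $\prod_{i=1}^t a_i$ divided by a product of $m$-th powers, so it remains an $m$-th power; the ordering $n=a_{i_1}\le\cdots\le a_{i_r}=g_m(n)$ is inherited; and since no integer appeared more than $m-1$ times in the original sequence, the same holds after deletion. Therefore the pruned sequence is a genuine $m$-product sequence starting at $n$, ending at $g_m(n)$, and free of $m$-th powers, which is exactly the claim.

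The main obstacle, and the only genuinely non-routine step, is justifying that the top term $g_m(n)$ is not itself an $m$-th power, since a careless deletion of the endpoint would ruin the conclusion. Everything else is bookkeeping, but this point must be argued cleanly via the minimality of $g_m(n)$ together with the hypothesis that $n$ is not an $m$-th power. A subtle sub-case to watch is whether removing $m$-th powers could leave the empty or a one-term product; but because $n$ itself is a surviving non-$m$-th-power term whose single appearance is not an $m$-th power, the product of survivors is a nontrivial non-$m$-th-power times the rest, so the construction cannot collapse degenerately.
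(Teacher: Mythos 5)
Your proposal is correct and follows essentially the same route as the paper: delete every occurrence of every $m$-th power, note that the removed terms multiply to an $m$-th power so the surviving product is still an $m$-th power (since $T^m\mid R^m$ gives $T\mid R$), and observe that $n$ survives while the minimality of $g_m(n)$ forces the largest term to survive as well. The only quibble is your parenthetical claim that $n$ not being an $m$-th power forces $g_m(n)>n$ --- this is false in general (e.g.\ $g_6(4)=4$ by Lemma~\ref{lemma:m-th power}) --- but it plays no role in the argument, since in that case every corresponding sequence consists only of copies of $n$ and the lemma is immediate.
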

\begin{proof}
    Assume that in every corresponding sequence for $g_m(n)$, there appears an $m$-th power. Take one of these sequences, $a_1,\ldots,a_t$, and note that $\prod_{i=1}^t a_i=R^m$ for some $R\in\mathbb{N}$. Let there be $s$ terms, $a_i$, that are $m$-th powers and label these $b_1,\ldots,b_s$. Then $\prod_{i=1}^s b_i=T^m$ for some $T\in\mathbb{N}$. (Note that $n$ is not an $m$-th power so $n\neq 0$. Therefore $a_i\geq n>0$ so $R,T>0$.) Additionally $$T^m=\prod_{i=1}^s b_i|\prod_{i=1}^t a_i=R^m,$$ so $T|R$ and let $R=kT$ for some $k\in\mathbb{N}$. Then if we remove all the terms that are $m$-th powers from $a_1,\ldots,a_t$, we see that the product of the remaining terms is $\frac{R^m}{T^m}=k^m$. Each term that appears is still between $n$ and $g_m(n)$, each integer appears less than $m$ times and the sequence still starts with $n$. So the remaining terms form a corresponding sequence for $g_m(n)$ and this does not contain any $m$-th powers. 
\end{proof}

We will prove that if we have two $m$-product sequences that have different smallest terms and the same largest term, then we can always construct a third $m$-product sequence that satisfies certain properties.

\begin{lemma}\label{lemma:product}
    Let $A:=(a_i)_{i=1}^t$ and $B:=(b_i)_{i=1}^{t'}$ be two $m$-product sequences satisfying $a_1<b_1$ and  $a_{t}=b_{t'}$. Let $1\leq s_1, s_t\leq  m-1$ be the number of appearances of $a_1$ and $a_t$ in $A$ respectively, and $1\leq v_{t'}\leq  m-1$ be the number of appearances of $b_{t'}$ in $B$. Then, there exists an $m$-product sequence, $C=(c_i)_{i=1}^{r}$, with the following properties.
    \begin{enumerate}
        \item $c_1=a_1$ appears $s_1$ times in $C$ and $c_{r}\leq a_t$,
        \item $a_t$ appears in $C$ exactly $s_t+v_{t'}\mod m$ times.
    \end{enumerate}
\end{lemma}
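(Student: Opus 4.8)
The plan is to build $C$ by merging $A$ and $B$ into a single multiset and then repairing the multiplicities. First I would form the concatenation of $A$ and $B$, viewed as a multiset and sorted in nondecreasing order. Its product is $R^m\cdot (R')^m=(RR')^m$, an $m$-th power, where $R'$ is the root witnessing that $B$ is an $m$-product sequence. Since $a_1<b_1$, the value $a_1$ occurs only among the terms of $A$, so it appears exactly $s_1$ times in the merged multiset; and since both sequences end at $a_t=b_{t'}$, the value $a_t$ appears exactly $s_t+v_{t'}$ times. The only way this merged multiset can fail to be an $m$-product sequence is that some value now appears $m$ or more times.

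To fix this, I would pass to the reduced multiset in which each distinct value $w$ occurring $c_w$ times is kept with multiplicity $c_w \bmod m$, discarding the rest; equivalently, delete $m$ equal terms at a time until every value appears at most $m-1$ times, and call the nondecreasing result $C$. Deleting $m$ copies of $w$ divides the product by $w^m$, so the product of $C$ equals $(RR')^m$ divided by a perfect $m$-th power. The key verification is that this quotient $N$ is itself a perfect $m$-th power: for every prime $p$ one has $v_p(N)=m\cdot\bigl(v_p(RR')-v_p(\text{deleted part})\bigr)$, a nonnegative multiple of $m$ because $N$ is a positive integer, so $N=(R'')^m$ for an integer $R''$. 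Hence $C$ is a genuine $m$-product sequence.

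It remains to check the two listed properties. Since $1\leq s_1\leq m-1$, the count of $a_1$ satisfies $s_1\bmod m=s_1$, so $a_1$ is never deleted and still appears exactly $s_1$ times; as $a_1$ is the smallest value present, it is $c_1$, giving property (1) together with $c_r\leq a_t$ (every surviving term came from the merged multiset, all of whose terms are $\leq a_t$). For property (2), the count of $a_t$ becomes $(s_t+v_{t'})\bmod m$ by construction. I expect the only genuinely delicate point to be the perfect-$m$-th-power check after deletion, which the prime-valuation argument settles; everything else is bookkeeping on multiplicities, using $1\leq s_1,s_t,v_{t'}\leq m-1$ (so in particular $s_t+v_{t'}\leq 2m-2$, meaning at most one block of $m$ copies of $a_t$ is ever removed).
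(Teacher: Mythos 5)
Your proposal is correct and follows essentially the same route as the paper: concatenate $A$ and $B$, observe the product is an $m$-th power, reduce each value's multiplicity modulo $m$, and track the counts of $a_1$ and $a_t$ using $a_1<b_1$ and $a_t=b_{t'}$. Your explicit prime-valuation check that the reduced product remains a perfect $m$-th power is a slightly more careful rendering of a step the paper states in one line, but the argument is the same.
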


\begin{proof}
Let $A$ and $B$ be as in the statement.

Consider the set $\{a_1,\ldots,a_t,b_1,\ldots,b_{t'}\}$.  Note that as $\prod_{i=1}^t a_i$ and $\prod_{j=1}^{t'}b_j$ are both non-zero $m$-th powers, it follows that $\prod_{i=1}^t a_i \cdot \prod_{j=1}^{t'}b_j$ is a non-zero $m$-th power. If an integer, $x>0$, appears $h$ times in $\{a_1,\ldots,a_t,b_1,\ldots,b_{t'}\}$, then remove $\lfloor \frac{h}{m}\rfloor m$ terms in the sequence that equal $x$. We do this for every integer that appears in $\{a_1,\ldots,a_t,b_1,\ldots,b_{t'}\}$. As the number of terms equal to $x$ that we are removing is a multiple of $m$, the product of the remaining terms is still an $m$-th power. Label the remaining terms $c_1,\ldots,c_{r}$ such that $c_1\leq \ldots\leq c_{r}$ and denote the sequence $C$. 

It is clear that any integer, $x$, appears in $C$ less than $m$ times as otherwise at least $m$ further terms equal to $x$ would have been removed. Note that $a_1$ is the smallest integer in $A$ and it does not appear in $B$, so $a_1$ appears $s_1$ times in $C$. It is then the smallest integer in $C$, so $c_1=a_1$. Therefore $C$ is a $m$-product sequence starting at $n$. As $a_t=b_{t'}$ is the largest term in $A$ or $B$, we have that $c_{r}\leq a_t$. In fact, $a_t$ appears in $C$ exactly $s_t+v_{t'}\mod m$ times.
\end{proof}

\begin{definition}
    Let $A,B$ and $C$ be as in Lemma \ref{lemma:product}. We say that $C=A\cdot B$. We write $B\cdot B$ as $B^2$. Note that $C\cdot B=A\cdot B\cdot B=A\cdot B^2$.
    
    Note that this operation is commutative.
\end{definition}

Repeating this operation gives us the ability to change the number of times $a_t$ appears in our final sequence.

\begin{corollary}\label{lemma:k product}
Let $A$ and $B$ be as in Lemma \ref{lemma:product}. Let $D=A\cdot B^k$ for some $k\in\mathbb{N}$ and denote the terms of $D$ as $d_1\leq\ldots\leq d_{r}$. Then $D$ is a $m$-product sequence where $d_1=a_1$ and $d_r\leq a_t$. Additionally $a_1$ appears $s_1$ times in $D$ and $a_t$ appears $s_t+kv_{t'}\mod m$ times in $D$.
\end{corollary}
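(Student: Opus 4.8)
The plan is to prove Corollary~\ref{lemma:k product} by induction on $k$, using Lemma~\ref{lemma:product} as the base case and repeatedly applying the product operation to fold in one more copy of $B$ at each step. The key observation is that the operation $A\cdot B$ is well-defined precisely because the two sequences being combined share the same largest term and have different smallest terms; so to iterate, I must check at each stage that the intermediate sequence $A\cdot B^{k}$ still satisfies the hypotheses of Lemma~\ref{lemma:product} when multiplied by one further copy of $B$.

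For the base case $k=1$, we have $D=A\cdot B$, which is exactly the sequence $C$ produced by Lemma~\ref{lemma:product}: its smallest term is $d_1=a_1$, its largest term satisfies $d_r\le a_t$, the term $a_1$ appears $s_1$ times, and $a_t$ appears $s_t+v_{t'}\bmod m$ times. This matches the claimed formula $s_t+kv_{t'}\bmod m$ when $k=1$. For the inductive step, suppose $D'=A\cdot B^{k}$ is an $m$-product sequence with smallest term $a_1$ (appearing $s_1$ times) and largest term $d'_{r'}\le a_t$, in which $a_t$ appears $s_t+kv_{t'}\bmod m$ times. I want to form $D=D'\cdot B$ and invoke Lemma~\ref{lemma:product} with $D'$ playing the role of $A$ and $B$ unchanged.

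The main obstacle I anticipate is verifying that $D'$ and $B$ genuinely satisfy the hypotheses of Lemma~\ref{lemma:product}, namely that they have the same largest term and that $D'$ has the strictly smaller smallest term. The smallest-term condition is immediate: $D'$ has smallest term $a_1<b_1$. The largest-term condition is the delicate point, because Lemma~\ref{lemma:product} only guarantees $d'_{r'}\le a_t$, not equality, so $D'$ need not end at $a_t=b_{t'}$. However, the operation as constructed in Lemma~\ref{lemma:product} combines the full multisets and removes multiples of $m$ copies of each value, so the proof actually goes through using the union $\{d'_1,\dots,d'_{r'},b_1,\dots,b_{t'}\}$ directly; the conclusion $c_r\le a_t$ only needs $a_t$ to be an upper bound for both sequences, which holds since $d'_{r'}\le a_t=b_{t'}$. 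Thus I would either restate the combination step for this slightly more general situation or observe that the argument of Lemma~\ref{lemma:product} never used equality of the largest terms, only that they share the common upper bound $a_t$.

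Finally, tracking the multiplicity of $a_t$ completes the induction. Since $a_t$ appears $s_t+kv_{t'}\bmod m$ times in $D'$ and $v_{t'}$ times in $B$, the combined count before reduction is $(s_t+kv_{t'})+v_{t'}=s_t+(k+1)v_{t'}$, and after removing multiples of $m$ we obtain $s_t+(k+1)v_{t'}\bmod m$ appearances in $D$, as required. The multiplicity of $a_1$ is unaffected by multiplication by $B$ (since $a_1<b_1$ means $a_1$ does not appear in $B$), so it remains $s_1$, and the largest term of $D$ is still bounded by $a_t$. This closes the induction and establishes the claim for all $k\in\mathbb{N}$.
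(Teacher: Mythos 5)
Your proof is correct and follows the same route as the paper, whose entire proof is the single sentence ``This follows from repeated application of Lemma \ref{lemma:product}.'' The value of your write-up is that you notice, and repair, a real imprecision that the paper's one-liner skates over: Lemma \ref{lemma:product} as stated requires the two sequences to have \emph{equal} largest terms, but after the first application the intermediate sequence $A\cdot B^{k}$ is only guaranteed to have largest term $\leq a_t$ (indeed $a_t$ may appear $0$ times in it, e.g.\ when $s_t+kv_{t'}\equiv 0 \bmod m$), so the hypotheses of the lemma are not literally met at the next step. Your observation that the construction in Lemma \ref{lemma:product} never uses equality of the largest terms --- only that $a_t$ is a common upper bound and that the smaller sequence's minimum $a_1$ is strictly below $b_1$ --- is exactly the right fix, and your bookkeeping of the multiplicities ($(s_t+kv_{t'})+v_{t'}\equiv s_t+(k+1)v_{t'}\bmod m$ for $a_t$, and $s_1$ unchanged for $a_1$ since $a_1$ never appears in $B$) is accurate. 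The only cosmetic point is that the paper takes $0\in\mathbb{N}$, so you should note the trivial case $k=0$, where $D=A$; otherwise your induction starting at $k=1$ covers everything.
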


\begin{proof}
    This follows from repeated application of Lemma \ref{lemma:product}.
\end{proof}

\section{Proof of Conjecture 1.1}

The "if" direction of the conjecture was shown in \cite{kagey2024conjecture}, where they prove that $g_2(0)=0$ and $g_2(6)=12$, as well as the following result.

\begin{lemma}[{\cite[Lemma 7]{kagey2024conjecture}}]
    Let $p>3$ be prime, then $g_2(p)=2p$.
\end{lemma}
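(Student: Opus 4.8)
The plan is to prove the two inequalities $g_2(p) \geq 2p$ and $g_2(p) \leq 2p$ separately; recall that for $m = 2$ a $2$-product sequence is simply a strictly increasing list of distinct integers whose product is a perfect square.

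For the lower bound I would argue by a parity count on the exponent of the prime $p$. Suppose there were a $2$-product sequence $p = a_1 < a_2 < \cdots < a_t = s$ with $s < 2p$. Every term lies in $[p, 2p)$, and the only multiple of $p$ in this interval is $p$ itself, since the next multiple $2p$ is excluded; moreover no term is divisible by $p^2$ because $p^2 > 2p$ for $p > 2$. Hence the exponent of $p$ in $\prod_{i=1}^t a_i$ is exactly $1$, which is odd, contradicting that the product is a perfect square. Therefore $s \geq 2p$, giving $g_2(p) \geq 2p$.

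For the upper bound I would exhibit an explicit short sequence. By the same exponent argument, any square-product set containing $p$ must also contain $2p$, and since $p \cdot 2p = 2p^2$, it suffices to find a single integer in the open interval $(p, 2p)$ whose squarefree part is $2$, i.e.\ an integer of the form $2k^2$: then $p, 2k^2, 2p$ has product $4p^2 k^2 = (2pk)^2$, the three terms are distinct since $p < 2k^2 < 2p$, and the sequence runs from $p$ to $2p$, so $g_2(p) \leq 2p$. The existence of such a $2k^2$ is the crux. Taking $k = \lfloor \sqrt{p/2} \rfloor$ so that $2k^2 \leq p < 2(k+1)^2$, I would show the next value $2(k+1)^2$ already lies below $2p$: the inequality $(k+1)^2 < p$ follows from $p \geq 2k^2$ together with $2k^2 > (k+1)^2$, which holds exactly when $k \geq 3$, i.e.\ for all primes $p \geq 19$. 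Since $2(k+1)^2$ is even it cannot equal the odd prime $p$, and it cannot equal $2p$ since $p$ is not a perfect square, so $2(k+1)^2 \in (p,2p)$ as required.

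The remaining work, and the main obstacle, is twofold: verifying the gap estimate carefully enough that the chosen $2k^2$ falls strictly inside $(p,2p)$ rather than landing on an endpoint, and disposing of the finitely many small primes $p \in \{5,7,11,13,17\}$ not covered by the $k \geq 3$ bound. For these I would simply check by hand that $(p,2p)$ contains one of $8 = 2\cdot 2^2$ or $18 = 2\cdot 3^2$ (indeed $8 \in (5,10)$ and $8 \in (7,14)$, while $18 \in (11,22)$, $18 \in (13,26)$, and $18 \in (17,34)$), completing the construction and hence the equality $g_2(p) = 2p$.
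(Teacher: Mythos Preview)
Your proof is correct. Note, however, that the paper does not actually prove this lemma: it is quoted from \cite{kagey2024conjecture} without argument, and the paper's own later result $g_m(p)=2p$ (Corollary~\ref{corollary:p}) obtains the $m=2$ upper bound by again citing the Kagey--Rajesh bound $g_2(n)\le 2n$ (Lemma~\ref{lemma:2n}). So your write-up is in fact more self-contained than anything the present paper provides for this statement.

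Comparing to the machinery the paper does develop: your lower bound is precisely the specialisation of Lemma~\ref{lemma:p}/Corollary~\ref{lemma:2p} to $m=2$. Your upper bound is a three-term construction $p,\,2(k{+}1)^2,\,2p$, which is leaner than the paper's general device in Lemma~\ref{lemma:r bound}: for $m=2$, $k=1$ that lemma would insert \emph{both} a square $l_1^2$ and a twice-square $2l_2^2$ into $(r,2r)$ to form a four-term sequence, whereas you observe that for prime $r=p$ the extra square is unnecessary since $p\cdot 2p$ already has squarefree part $2$. Both approaches rest on the same $\sqrt{p}$-gap estimate and require a short hand check for the finitely many small primes below the threshold.
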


We will prove the `only if' direction. 

We use a basic lemma from \cite{Erdos}, providing an upper bound for $g_2(n)$. We provide the proof for completeness.

\begin{lemma}[{\cite[Lemma 1]{Erdos}}]\label{lemma:ab}
If $n=ab$ for $a\leq b$, then $g_2(n)\leq (a+1)(b+1)$.
\end{lemma}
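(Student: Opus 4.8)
The plan is to exhibit an explicit $2$-product sequence that starts at $n$ and whose largest term is exactly $(a+1)(b+1)$; by the remark following the definitions, the existence of such a sequence immediately yields $g_2(n)\leq(a+1)(b+1)$. The whole argument rests on the algebraic identity
\[
ab\cdot a(b+1)\cdot(a+1)b\cdot(a+1)(b+1)=\bigl(ab(a+1)(b+1)\bigr)^2,
\]
which shows that the four integers $ab$, $a(b+1)$, $(a+1)b$, $(a+1)(b+1)$ have a perfect-square product. Since $n=ab$ is the smallest of the four and $(a+1)(b+1)$ is the largest, these four numbers are the natural candidate for a corresponding sequence.

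First I would dispose of the degenerate case $a=b$. Here $n=a^2$ is a perfect square, so taking $d=2$ in \cref{lemma:m-th power} (note $\gcd(2,2)=2>1$) gives $g_2(n)=n=a^2\leq(a+1)^2=(a+1)(b+1)$, and the bound holds trivially. The boundary case $a=0$ (hence $n=0$) is similar, using $g_2(0)=0$. This lets me assume $1\leq a<b$ in the main case.

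Next, for $1\leq a<b$ I would verify that the four integers are genuinely distinct and correctly ordered. Writing $a(b+1)=ab+a$ and $(a+1)b=ab+b$, the strict inequality $a<b$ gives
\[
ab<a(b+1)<(a+1)b<(a+1)(b+1).
\]
Distinctness is precisely what a $2$-product sequence demands, since for $m=2$ no integer may appear more than $m-1=1$ time. Hence
\[
ab,\ a(b+1),\ (a+1)b,\ (a+1)(b+1)
\]
is a $2$-product sequence starting at $n=ab$ and ending at $(a+1)(b+1)$, which establishes the claimed bound.

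The construction is short, so there is no serious analytic obstacle; the only genuine content is spotting the factorisation identity, and the only place where care is needed is the boundary case $a=b$ (together with the trivial $a=0$), where two of the four candidate terms collide and the four-term sequence degenerates. Handling that case separately via \cref{lemma:m-th power} is what makes the bound hold uniformly for all $a\leq b$.
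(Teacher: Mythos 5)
Your proof is correct and follows essentially the same route as the paper: dispose of the degenerate case $a=b$ (where $n$ is a square) and otherwise use the four-term sequence $ab<a(b+1)<(a+1)b<(a+1)(b+1)$, whose product is the square $\bigl(ab(a+1)(b+1)\bigr)^2$. Your extra attention to the case $a=0$ and to the distinctness of the four terms is a minor refinement of the same argument, not a different approach.
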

\begin{proof}
    If $a=b$, then $n=a^2$ and so $g_2(n)=a^2<(a+1)^2$. 
    
    If $a<b$, then consider the sequence $ab<a(b+1)<(a+1)b<(a+1)(b+1)$. Then clearly the product of these terms, $a^2(a+1)^2 b^2(b+1)^2$, is a square and so $g_2(n)\leq(a+1)(b+1)$. 
\end{proof}

Note that for any composite $n$, there exists $a,b\in\mathbb{N}$ such that $1<a\leq b<n$ such that $n=ab$.

\begin{lemma}
    If $n$ is not prime and $n\neq 0,6$, then $g_2(n)<2n$.
\end{lemma}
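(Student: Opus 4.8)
The plan is to split on whether $n$ is a perfect square, handling the non-square case through the elementary upper bound of Lemma~\ref{lemma:ab} with a judicious choice of factorization.

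First, if $n$ is a perfect square, say $n=a^2$, then by Lemma~\ref{lemma:m-th power} (taking $d=2$, so that $\gcd(d,2)>1$) we have $g_2(n)=n$; since $n\neq 0$ this gives $g_2(n)=n<2n$ immediately. This disposes of every square $n\geq 1$ in one stroke, in particular the potentially awkward small values $n=1$ and $n=4$.

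Second, suppose $n$ is composite but not a perfect square. Let $p$ be the smallest prime factor of $n$ and set $b=n/p$, so that $1<p\leq b<n$ and $n=pb$. By Lemma~\ref{lemma:ab}, $g_2(n)\leq (p+1)(b+1)$, so it suffices to prove $(p+1)(b+1)<2n=2pb$. Expanding $(p+1)(b+1)=pb+p+b+1$ and rearranging, this inequality is equivalent to $(p-1)(b-1)>2$. The reason for choosing $a=p$ to be the smallest prime factor is precisely that it makes the factor $b$, and hence the product $(p-1)(b-1)$, as large as possible, giving the strongest version of the bound.

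It then remains to verify $(p-1)(b-1)>2$ for every composite non-square $n\neq 6$. If $p\geq 3$, then $b\geq p\geq 3$, so $(p-1)(b-1)\geq 4>2$. If $p=2$, then $(p-1)(b-1)=b-1$, and the inequality $b-1>2$ fails only when $b\leq 3$, i.e. $n\in\{4,6\}$; here $n=4$ is a perfect square already handled in the first case, and $n=6$ is excluded by hypothesis. Thus in all remaining cases $(p-1)(b-1)>2$ holds and $g_2(n)<2n$. The only real content — the ``obstacle'' being merely bookkeeping — is recognizing that the smallest-prime-factor factorization yields the sharpest bound and confirming that the inequality degenerates exactly at $n=4$ and $n=6$, both of which are accounted for; no deeper structural information about $g_2$ is required beyond the upper bound of Lemma~\ref{lemma:ab}.
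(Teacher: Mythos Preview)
Your proof is correct and follows essentially the same route as the paper's: apply Lemma~\ref{lemma:ab} to a nontrivial factorization $n=ab$, reduce the inequality $(a+1)(b+1)<2ab$ to $(a-1)(b-1)>2$, and observe this fails only at $n=4,6$, with $n=4$ dispatched as a square. One minor correction to your commentary (which does not affect the argument): choosing $a=p$ the smallest prime factor does \emph{not} maximize $(a-1)(b-1)$; since $(a-1)(b-1)=n+1-(a+b)$ for fixed $n=ab$, it is largest when $a$ and $b$ are close, so your choice actually gives the \emph{weakest} bound among factorizations---but it is still strong enough.
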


\begin{proof}
    Note that $1$ is a square, so $g_2(1)=1$. Otherwise, if $n\neq 0$ is not a prime, then $n$ is composite. So let $n=ab$ such that $1<a\leq b<n$. Then by Lemma \ref{lemma:ab}, we can see that $g_2(n)\leq (a+1)(b+1)$. Then $g_2(n)\leq (a+1)(b+1)<2ab=2n$ if and only if $(a-1)(b-1)=ab-a-b+1>2$. If $a=2$ and $b\geq 4$ or $a,b\geq 3$ then this is satisfied. Therefore it remains to consider $a=2, b=2$ and $a=2,b=3$. These correspond to $n=4$ and $n=6$, however we do not consider $n=6$. As $n=4$ is a square, we see that $g_2(4)=4<8$.   
\end{proof}

Together with the fact that $g_2(2)=6>4$ and $g_2(3)=8>6$ (as shown in the Appendix), this completes the result.

\begin{theorem}\label{conj}
    For all $n\in\mathbb{N}$, $g_2(n)=2n$ if and only if:
    \begin{enumerate}
        \item $n>3$ is prime

        \item $n=0$

        \item $n=6$.
    \end{enumerate}
\end{theorem}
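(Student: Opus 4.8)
The plan is to prove the two directions separately, assembling the results already established above. For the ``if'' direction I would verify each of the three listed cases directly: the equalities $g_2(0)=0=2\cdot 0$ and $g_2(6)=12=2\cdot 6$ are recorded in \cite{kagey2024conjecture}, while $g_2(p)=2p$ for every prime $p>3$ is exactly the content of the quoted Lemma 7 of \cite{kagey2024conjecture}. Hence whenever $n$ satisfies one of conditions (1)--(3) we have $g_2(n)=2n$.

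For the ``only if'' direction I would argue by contrapositive: assuming $n$ is not a prime greater than $3$ and $n\neq 0$ and $n\neq 6$, I show $g_2(n)\neq 2n$. I split on primality of $n$. If $n$ is not prime, then since also $n\neq 0,6$, the preceding Lemma gives $g_2(n)<2n$ and in particular $g_2(n)\neq 2n$; this simultaneously disposes of $n=1$, $n=4$, and every composite other than $6$. The only values left are the primes that fail to exceed $3$, namely $n=2$ and $n=3$, and for these I would invoke the direct computations $g_2(2)=6$ and $g_2(3)=8$ carried out in the Appendix, which yield $g_2(2)=6\neq 4$ and $g_2(3)=8\neq 6$. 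As these cases exhaust every $n$ outside the stated list, the contrapositive, and hence the ``only if'' direction, is complete.

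Since the genuine mathematical work has already been isolated into the lemmas, the theorem itself is essentially a synthesis, and the one point deserving care is confirming that the case split is exhaustive. The preceding Lemma is stated only for non-prime $n$, so it says nothing about the primes $2$ and $3$, which are precisely the primes excluded from condition (1); I would therefore make explicit that the set of $n$ lying outside the list decomposes exactly as the non-prime $n$ with $n\neq 0,6$ together with $n\in\{2,3\}$, so that no value of $n$ slips through. The most delicate ingredient feeding this argument is the composite bound $g_2(n)\leq (a+1)(b+1)$ of Lemma \ref{lemma:ab} combined with the strict inequality $(a+1)(b+1)<2ab$, whose only borderline factorizations correspond to $n=4$ and $n=6$; but as both the bound and its use are already proved above, the theorem requires nothing beyond this bookkeeping.
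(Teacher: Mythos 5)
Your proposal is correct and follows essentially the same route as the paper: the ``if'' direction is delegated to the results of \cite{kagey2024conjecture}, the ``only if'' direction is handled by the preceding lemma for non-prime $n\neq 0,6$, and the remaining primes $n=2,3$ are settled by the Appendix computations $g_2(2)=6$ and $g_2(3)=8$. Your explicit check that the case split is exhaustive is a welcome clarification but does not change the argument.
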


\section{Upper bounds of \texorpdfstring{$g_m(n)$}{gm(n)}}
We will first prove some results that will come in useful later.

\begin{lemma}\label{lemma:p}
    For any $m\geq 2$ and prime $p$, let $kp^j$ be the smallest multiple of $p$ that appears in a $m$-product sequence $a_1,\ldots, a_t$ where $\gcd(k,p)=1$ and $j\geq 1$. Then if $\gcd(j,m)=1$, a distinct term, $rp^s$, such that $\gcd(r,p)=1$ and $m\nmid s$ must appear in the sequence. So $a_t\geq kp^j+p$. 
\end{lemma}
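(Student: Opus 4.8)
The plan is to argue through the exponent of $p$ in the total product, which must be divisible by $m$ since an $m$-product sequence has product $R^m$. First I would fix notation: for each term $a_i$, let $v_p(a_i)$ be the exponent for which $p^{v_p(a_i)}\,||\,a_i$, and record the divisibility $\sum_{i=1}^t v_p(a_i)\equiv 0 \pmod m$. Because $kp^j$ is the smallest multiple of $p$ appearing and $\gcd(k,p)=1$, every term that is a multiple of $p$ is at least $kp^j$, and each copy of $kp^j$ contributes exactly $j$ to this sum. Note also that $\gcd(j,m)=1$ with $j\geq1$ forces $m\nmid j$, so the copies of $kp^j$ already have $p$-valuation not divisible by $m$; the content of the lemma is that a \emph{different}-valued such term exists.

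Next I would proceed by contradiction: suppose no distinct term $rp^s$ with $\gcd(r,p)=1$ and $m\nmid s$ appears. Then every multiple of $p$ whose value differs from $kp^j$ has $p$-valuation divisible by $m$, hence contributes $0$ modulo $m$. Let $c$ be the number of times $kp^j$ appears; since we have an $m$-product sequence and $kp^j$ does appear, $1\leq c\leq m-1$. Collecting contributions modulo $m$, only the copies of $kp^j$ survive, giving $\sum_{i=1}^t v_p(a_i)\equiv cj \pmod m$. Combined with $\sum_{i=1}^t v_p(a_i)\equiv 0 \pmod m$, this yields $m\mid cj$.

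The crux is then the coprimality step: since $\gcd(j,m)=1$, $m\mid cj$ forces $m\mid c$, which is impossible for $1\leq c\leq m-1$. This contradiction establishes the existence of the required distinct term $rp^s$. Finally, since $rp^s$ is a multiple of $p$ whose value differs from the minimal multiple $kp^j$, and both are multiples of $p$, their difference is a positive multiple of $p$; hence $rp^s\geq kp^j+p$, and as $a_t$ is the largest term and $rp^s$ appears, $a_t\geq rp^s\geq kp^j+p$.

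I expect the main subtlety—rather than a genuine obstacle—to lie in the modular bookkeeping: justifying that, under the contradiction hypothesis, every multiple of $p$ other than the copies of $kp^j$ contributes $0\pmod m$, and carefully invoking the defining constraint of an $m$-product sequence to pin down $1\leq c\leq m-1$ so that $m\nmid c$. Once the count is set up correctly, the coprimality conclusion and the final spacing argument $a_t\geq kp^j+p$ are immediate.
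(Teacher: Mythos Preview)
Your proposal is correct and follows essentially the same argument as the paper: both compute the $p$-valuation of the product modulo $m$, observe under the contradiction hypothesis that only the $c$ (the paper calls it $l$) copies of $kp^j$ contribute nontrivially, deduce $m\mid cj$, and use $\gcd(j,m)=1$ together with $1\le c\le m-1$ to reach a contradiction. Your write-up additionally spells out the final inequality $a_t\ge kp^j+p$ via the spacing of multiples of $p$, which the paper asserts without comment.
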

\begin{proof}
    Let $kp^j$ be the smallest multiple of $p$ in a $m$-product sequence, $0<a_1,\ldots,a_t$, where $\gcd(k,p)=\gcd(j,m)=1$. As there exists $R\in\mathbb{N}$ such that $\prod_{i=1}^t a_i=R^m$, we have that $p|R^m$ and so $p|R$. Therefore $p^s||R$ for some $s\geq 1$ and $p^{sm}||R^m$. Let $1\leq l \leq m-1$ be the number of times $kp^j$ appears in the sequence.
    
    Assume all terms other than $kp^j$ in the sequence are of the form $rp^{vm}$ for some $r,v\in\mathbb{N}$ and $\gcd(r,p)=1$. As $\prod_{i=1}^t a_i=R^m$, we see that $p^{jl+wm}||R^m$ for some $w\in\mathbb{N}$. Therefore $sm=jl+wm$ so $m|jl$ and as $\gcd(j,m)=1$ we see that $m|l$. However this is a contradiction, as $0<l<m$.
\end{proof}

\begin{corollary}\label{lemma:n+p}
    If $p$ is a prime such that $p^j||n$ where $j\geq 1$ and $\gcd(j,m)=1$ then $g_m(n)\geq n+p$. 
\end{corollary}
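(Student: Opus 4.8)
The plan is to apply Lemma \ref{lemma:p} directly to a corresponding sequence for $g_m(n)$. Fix such a sequence $n = a_1 \leq \ldots \leq a_t = g_m(n)$; by definition this is an $m$-product sequence whose smallest term is $n$. Since $p^{j}\| n$ with $j \geq 1$, the prime $p$ divides $n$, so $n$ is a multiple of $p$. Because every term of the sequence is at least $a_1 = n$, the term $n$ is the smallest multiple of $p$ occurring in the sequence. Writing $k = n/p^j$, we have $\gcd(k,p) = 1$ and $n = kp^j$, so $n = kp^j$ plays exactly the role of the smallest multiple of $p$ in the hypothesis of Lemma \ref{lemma:p}.

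With $\gcd(j,m) = 1$ by assumption, Lemma \ref{lemma:p} then applies and yields $a_t \geq kp^j + p = n + p$. Since $a_t = g_m(n)$, this gives $g_m(n) \geq n + p$, as required. The extra term $rp^s$ guaranteed by Lemma \ref{lemma:p} is a multiple of $p$ (as $m \nmid s$ forces $s \geq 1$) that is distinct from $n$, hence strictly larger than the smallest multiple $n$; being a multiple of $p$ exceeding $n$, it is at least $n + p$, which is what pushes the largest term up to the claimed bound.

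There is essentially no obstacle beyond correctly matching the hypotheses of Lemma \ref{lemma:p}: the only point requiring care is the identification of $n$ itself as the smallest multiple of $p$ in the sequence, which is immediate from the fact that $n$ is the minimum term and $p \mid n$. It is worth recording that the hypotheses also exclude the degenerate case $g_m(n) = n$: by Lemma \ref{lemma:m-th power} that would force $n = \ell^d$ with $\gcd(d,m) > 1$, and then $p^{j}\| n$ would give $d \mid j$, so that $\gcd(j,m) > 1$, contradicting $\gcd(j,m) = 1$. Thus every corresponding sequence is genuinely nontrivial and the appeal to Lemma \ref{lemma:p} is legitimate.
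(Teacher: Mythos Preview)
Your proof is correct and follows essentially the same approach as the paper: identify $n$ as the smallest multiple of $p$ in any corresponding sequence and invoke Lemma~\ref{lemma:p} to obtain $a_t \geq n + p$. Your additional remark about the degenerate case $g_m(n) = n$ is a valid observation but not strictly needed, since the contradiction argument inside the proof of Lemma~\ref{lemma:p} already covers sequences with a single distinct term.
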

\begin{proof}
    All corresponding sequences for $g_m(n)$ must begin with $n$, so all of the sequences have $n$ as the smallest multiple of $p$ appearing in them. By Lemma \ref{lemma:p}, the largest term of all these sequences must be greater than or equal to $n+p$. Therefore $g_m(n)\geq n+p$.
\end{proof}

\begin{corollary}\label{lemma:2p}
    For all primes $p$, we have $g_m(p)\geq 2p$.
\end{corollary}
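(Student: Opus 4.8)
The plan is to read off this statement as an immediate special case of Corollary \ref{lemma:n+p}, so almost all of the work has already been done. The only thing to observe is that a prime $p$ has the trivial factorisation $p = p^1$, so that the prime appearing in Corollary \ref{lemma:n+p} can be taken to be $p$ itself, and the relevant exponent is $j = 1$.

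Concretely, I would set $n = p$ and note $p^1 \| p$, hence $j = 1$. The hypothesis of Corollary \ref{lemma:n+p} requires $\gcd(j, m) = 1$; but $\gcd(1, m) = 1$ for every $m \geq 2$, so this condition is satisfied automatically. Applying Corollary \ref{lemma:n+p} then yields $g_m(p) \geq n + p = p + p = 2p$, which is exactly the claimed bound.

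There is essentially no obstacle here beyond checking that the exponent condition $\gcd(j,m)=1$ holds, and this is trivial once one notices that the exponent of a prime in its own factorisation is $1$. In other words, the entire content of this corollary is the bookkeeping observation that the general lower bound $g_m(n) \geq n + p$, valid whenever $p^j \| n$ with $\gcd(j,m)=1$, specialises cleanly to the case $n = p$. I would therefore keep the proof to a single short paragraph invoking Corollary \ref{lemma:n+p} with $n = p$ and $j = 1$.
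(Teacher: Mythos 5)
Your proof is correct and matches the paper's argument exactly: the paper also derives this as an immediate consequence of Corollary \ref{lemma:n+p}, and your explicit check that $j=1$ gives $\gcd(j,m)=1$ is just the detail the paper leaves implicit.
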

\begin{proof}
    This follows from Corollary \ref{lemma:n+p}
\end{proof}

We now consider upper bounds for $g_m(n)$. There have been significant results proven for $g_2(n)$. In \cite{Erdos}, Selfridge proved that, letting $P(n)$ is the largest prime factor of $n$, if $P(n)>\sqrt{2n}+1$ then $g_2(n)=n+P(n)$ and if $P(n)\leq \sqrt{2n}+1$ then $$g_2(n)\leq n+ \frac{3}{4}(\sqrt{8n+1}+1),$$ for $n\neq 2,3,8,10,32$. By considering $n=p(p-1)$ for some prime $p$, we see that as $$p(p-1)\cdot (p-1)(p+1)\cdot p(p+1)$$ is a square, $g_2(p(p-1))\leq p(p+1)$. By Lemma \ref{lemma:p}, we see that $g_2(p(p-1))\geq p(p+1)$. Therefore $g_2(n)=n+2p>n+2\sqrt{n}$. So the order of magnitude of Selfridge's bound cannot be improved. Additionally, if both $p$ and $2p+1$ are primes, then, letting $n=p(2p-1)$, as $$p(2p-1)\cdot 2p^2\cdot (p+1)(2p-1)\cdot p(2p+2)$$ is a square, we see that $g_2(p(2p-1))\leq p(2p+2)$. By Lemma \ref{lemma:p}, we see that either $p(2p+1)$ is in the sequence, or $g_2(p(2p-1))\geq p(2p+2)$. However if $p(2p+1)$ is in the sequence, then Lemma \ref{lemma:p} implies that $$g_2(p(2p-1))\geq (p+1)(2p+1)>p(2p+2).$$ Therefore $g_2(p(2p-1))\geq p(2p+2)$ and so $g_2(n)=n+3p=n+\frac{3}{4}(\sqrt{8n+1}+1)$. It follows that, if there are infinitely many primes $p$ such that $2p+1$ is a prime (a conjecture which has been widely studied \cite{dubner1996large}), then Selfridge's upper bound is tight.  

This inspired the authors in \cite{MR4706772} to investigate the relation between $g_2(n)-n$ and $P(n)$, leading them to show that, for any $c\in(0,1]$, $$\lim_{x\rightarrow \infty}\frac{\{n\leq x \,:\, g_2(n)-n<n^c\}}{x}=\lim_{x\rightarrow \infty}\frac{\{n\leq x \,:\, P(n)<n^c\}}{x}.$$ They also showed that for any $c>0$, there are infinitely many $n$ such that $g_2(n)-n<n^c$ and they provided lower bounds for $g_2(n)-n$ for non-square $n$.

It was stated without proof by Peter Kagey in \cite{citation-key} that for fixed $k$, we have $g_2(kp)=(k+1)p$ for sufficiently large primes $p$. The lower bound follows from Corollary \ref{lemma:n+p}.
We provide an upper bound for $g_m(kr)$ where $r$ is sufficiently large and not necessarily a prime.

 \begin{lemma}\label{lemma:r bound}
 For each $m\geq 2$ and $k\geq 1$, there exists $C(m,k)$ such that $g_m(kr)\leq (k+1)r$ for $r> C(m,k)$.
\end{lemma}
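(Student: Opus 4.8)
The plan is to exhibit, for all sufficiently large $r$, an explicit $m$-product sequence that starts at $kr$ and whose largest term is $(k+1)r$; by the remark that such a sequence bounds $g_m$, this gives $g_m(kr)\le (k+1)r$. The starting observation is that the only multiples of $r$ in the interval $[kr,(k+1)r]$ are the two endpoints $kr$ and $(k+1)r$, since consecutive multiples of $r$ differ by exactly $r$. Hence any mechanism that forces the (arbitrary, possibly large) prime factors of $r$ to occur to exponents divisible by $m$ must be built out of $kr$ and $(k+1)r$ alone. The natural choice is to take $kr$ once and $(k+1)r$ with multiplicity $m-1$, which is permitted since $m-1\le m-1$, and which yields
$$ kr\cdot\big((k+1)r\big)^{m-1}=k(k+1)^{m-1}\,r^m. $$
The factor $r^m$ is already a perfect $m$-th power, so the entire dependence on $r$ has been absorbed, and the $m$-th-power class of this partial product equals that of the fixed integer $k(k+1)^{m-1}$.

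Write $f$ for the $m$-th-power-free part of $k(k+1)^{m-1}$, a quantity depending only on $k$ and $m$ and bounded independently of $r$. It then suffices to adjoin one further term $N\in(kr,(k+1)r)$ whose $m$-th-power class cancels that of $f$. Concretely, let $g$ be the $m$-th-power-free part of $f^{m-1}$ (again a fixed, bounded integer) and seek $N$ of the form $N=g\,w^m$. For such an $N$ the total product
$$ kr\cdot\big((k+1)r\big)^{m-1}\cdot N = r^m w^m\, k(k+1)^{m-1} g $$
has $m$-th-power class equal to $\operatorname{class}(f)+\operatorname{class}(g)=\operatorname{class}(f)+(m-1)\operatorname{class}(f)=0$, since $r^mw^m$ is a perfect $m$-th power; hence the product is itself a perfect $m$-th power, as required.

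It remains to find $w$ with $g\,w^m\in(kr,(k+1)r)$, and this is where the only genuine work lies. The point is that the interval has length $r$, whereas consecutive values satisfy $g\big((w+1)^m-w^m\big)=O\!\big(g w^{m-1}\big)=O\!\big(r^{(m-1)/m}\big)=o(r)$ in the relevant range $w\asymp (r/g)^{1/m}$. Thus once $r$ exceeds a threshold $C(m,k)$ depending only on the bounded quantities $g$ and $m$, some value $g\,w^m$ lands strictly inside $(kr,(k+1)r)$, and for large $r$ such an $N$ is automatically distinct from both endpoints. Finally I would verify that, after sorting, $kr$, $N$, and $(k+1)r$ form a legitimate $m$-product sequence: every integer appears at most $m-1$ times, the smallest term is $kr$ and the largest is $(k+1)r$, giving $g_m(kr)\le (k+1)r$. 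The main obstacle is precisely this existence step, and it is resolved entirely by the gap estimate above together with the uniform boundedness of $g$; all remaining bookkeeping reduces to the single identity $kr\cdot((k+1)r)^{m-1}=k(k+1)^{m-1}r^m$.
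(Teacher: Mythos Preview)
Your argument is correct and takes a genuinely different route from the paper's. You start from the partial product $kr\cdot((k+1)r)^{m-1}=k(k+1)^{m-1}r^m$ and then insert a \emph{single} correction term $N=g\,w^m$, where $g$ is the $m$-th-power-free part of $\big(k(k+1)^{m-1}\big)^{m-1}$, to cancel the residual class. The paper instead inserts \emph{two} correction terms of the special shapes $t_1=k\,l_1^m$ and $t_2=(k+1)\,l_2^m$, taking the sequence $kr,\ t_1^{\{m-1\}},\ t_2,\ ((k+1)r)^{\{m-1\}}$; the product $k^m l_1^{m(m-1)}(k+1)^m l_2^m r^m$ is visibly an $m$-th power without any appeal to power-free parts. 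Your construction is more economical (one insertion instead of two) and the class-cancellation viewpoint is clean. The paper's construction, by contrast, yields an explicit closed-form threshold $C(m,k)=\bigl(1-(k/(k+1))^{1/m}\bigr)^{-m}$ that does not depend on factoring $k(k+1)^{m-1}$; this explicit value is actually used downstream (e.g.\ the bound $C(3,1)<114$ in the proof that $g_3(n)\le 2n$), which is something your implicit $C(m,k)$, depending on $g$, does not directly provide.
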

\begin{proof}
     If we can find two integers $t_1=kl_1^m$ and $t_2=(k+1)l_2^m$ where $kr<t_1,t_2<(k+1)r$, then $kr,t_1^{\{m-1\}},t_2,((k+1)r)^{\{m-1\}}$ forms an $m$-product sequence starting at $kr$. Such integers exist if there are some $l_1,l_2\in\mathbb{N}$ such that $r<l_1^m<\frac{k+1}{k}r$ and $\frac{k}{k+1}r<l_2^m<r$, or equivalently, $r^{\frac{1}{m}}<l<\left(\frac{k+1}{k}r\right)^{\frac{1}{m}}$ and $\left(\frac{k}{k+1}r\right)^{\frac{1}{m}}<l_2<r^{\frac{1}{m}}$. These $l_1,l_2$ is guaranteed to exist if both $$r^{\frac{1}{m}}\left(\left(\frac{k+1}{k}\right)^{\frac{1}{m}}-1\right)>1 \text{ and } r^{\frac{1}{m}}\left(1-\left(\frac{k}{k+1}\right)^\frac{1}{m}\right)>1.$$ These can be rearranged to $$r>\left(\left(\frac{k+1}{k}\right)^\frac{1}{m}-1\right)^{-m} \text{ and } r>\left(1-\left(\frac{k}{k+1}\right)^{\frac{1}{m}}\right)^{-m}.$$ 
     
     As $$\left(1-\left(\frac{k}{k+1}\right)^{\frac{1}{m}}\right)^{-m}\geq\frac{k+1}{k}\left(\left(\frac{k+1}{k}\right)^\frac{1}{m}-1\right)^{-m}>\left(\left(\frac{k+1}{k}\right)^\frac{1}{m}-1\right)^{-m},$$ if $r>\left(1-\left(\frac{k}{k+1}\right)^{\frac{1}{m}}\right)^{-m}:=C(m,k)$ we see that $g_m(kr)\leq (k+1)r$.
\end{proof}

The following lemma was shown in \cite{kagey2024conjecture}.

\begin{lemma}[{\cite[Lemma 6]{kagey2024conjecture}}]\label{lemma:2n}
   For all $n\geq 4$, we have $g_2(n)\leq 2n$. 
\end{lemma}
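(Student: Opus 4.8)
The plan is to handle $n\geq 4$ by splitting on whether $n$ is prime or composite; since every integer at least $2$ is one or the other, these two cases are exhaustive.

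For composite $n$, I would write $n=ab$ with $1<a\leq b<n$ (such a factorization always exists for composite $n$) and apply Lemma~\ref{lemma:ab} to obtain $g_2(n)\leq (a+1)(b+1)$. The desired inequality $(a+1)(b+1)\leq 2ab=2n$ rearranges to $(a-1)(b-1)\geq 2$, which holds whenever $a,b\geq 2$ unless $a=b=2$. Hence every composite $n\neq 4$ satisfies $g_2(n)\leq 2n$; this includes $n=6$, where $(a-1)(b-1)=2$ forces exactly $g_2(6)\leq 12=2\cdot 6$. The single exception is $n=4$, which is a perfect square, so Lemma~\ref{lemma:m-th power} (taking $m=d=2$) gives $g_2(4)=4\leq 8$ directly.

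For prime $n$, the hypothesis $n\geq 4$ forces $n>3$, so the result $g_2(p)=2p$ of \cite[Lemma 7]{kagey2024conjecture} yields $g_2(n)=2n\leq 2n$. Combining this with the composite case covers all $n\geq 4$ and completes the argument.

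I expect no serious obstacle: the proof is a short case analysis resting entirely on tools already established, principally Lemma~\ref{lemma:ab}. The only genuinely delicate point is the perfect-square exception $n=4$, for which Lemma~\ref{lemma:ab} alone gives merely $g_2(4)\leq 9>8$; there one must instead invoke that perfect squares satisfy $g_2(n)=n$ via Lemma~\ref{lemma:m-th power}. It is also worth verifying explicitly that the prime/composite split leaves no integer $\geq 4$ uncovered and that the factorization with $1<a\leq b<n$ is available for every composite $n$.
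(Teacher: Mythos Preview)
The paper does not supply its own proof of this lemma; it is quoted from \cite[Lemma~6]{kagey2024conjecture} without argument, so there is no in-paper proof to compare against. Your treatment of composite $n$ is correct and in fact parallels the paper's own proof of the nearby sharper statement (that $g_2(n)<2n$ for composite $n\neq 0,6$), which likewise applies Lemma~\ref{lemma:ab} and the rearrangement $(a+1)(b+1)\leq 2ab\iff (a-1)(b-1)\geq 2$.

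The prime case, however, risks circularity. You invoke \cite[Lemma~7]{kagey2024conjecture} (that $g_2(p)=2p$ for primes $p>3$), but that result is numbered \emph{after} Lemma~6 in the cited source, and its upper bound $g_2(p)\leq 2p$ is precisely the special case of Lemma~6 you are trying to establish. It is very likely that Kagey--Rajesh obtain the upper bound in their Lemma~7 by appealing to their Lemma~6, in which case your argument is circular. To repair this, give a direct construction for primes $p>3$: for instance, there is always an integer of the form $2k^2$ with $p<2k^2<2p$ (the interval $(\sqrt{p/2},\sqrt{p})$ has length exceeding $1$ once $p\geq 12$, and $p=5,7,11$ are covered by $2k^2=8,8,18$ respectively), and then $p\cdot(2k^2)\cdot(2p)=(2kp)^2$ exhibits $g_2(p)\leq 2p$ without reference to the lemma being proved.
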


We will prove that an analogous statement holds for all $m$.

We prove this for $m=3$ first.

\begin{lemma}\label{lemma:3}
    We have $g_3(n)\leq 2n$ if and only if $n\neq 4$.
\end{lemma}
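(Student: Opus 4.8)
The plan is to prove the two directions separately, treating $n=4$ as the sole exception. For the forward (contrapositive) direction I must show that $n=4$ forces $g_3(4)>8$; for the reverse direction I must exhibit, for every $n\neq 4$, a $3$-product sequence contained in $[n,2n]$ that starts at $n$.

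First I would dispose of $n=4$. In the window $[4,8]=\{4,5,6,7,8\}$ any corresponding sequence for a value $\le 8$ may use each integer at most twice and must have product a cube. The primes $5$ and $7$ occur only in $5$ and $7$, so their exponents cannot reach a multiple of $3$; hence $5$ and $7$ are excluded. The prime $3$ occurs only in $6$, so $6$ is excluded as well. Only $4=2^2$ and $8=2^3$ remain, and with $4$ appearing $a\in\{1,2\}$ times and $8$ appearing $b\in\{0,1,2\}$ times the exponent of $2$ is $2a+3b\equiv 2a\pmod 3$, which is never $0$ for $a\in\{1,2\}$. Thus no cube product exists in $[4,8]$, so $g_3(4)>8$, giving the $\Rightarrow$ direction.

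For the reverse direction I split on the shape of $n$. If $n$ is a perfect cube then Lemma \ref{lemma:m-th power} gives $g_3(n)=n\le 2n$ (this also covers $n=0,1$). For non-cube $n$ that are large enough I use the $k=1$ instance of the construction in Lemma \ref{lemma:r bound}: writing $l=\lfloor n^{1/3}\rfloor$, I would show that $l^3\in(n/2,n)$ and $(l+1)^3\in(n,2n)$ whenever $l\ge 4$, that is $n\ge 65$. Both inclusions follow from the single inequality $3l^2+3l+1\le l^3$ (valid for $l\ge4$): it yields $(l+1)^3<2l^3$, and combined with $l^3<n<(l+1)^3$ it gives $n<2l^3$ and $(l+1)^3<2l^3<2n$. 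The sequence
\[
n,\ \bigl((l+1)^3\bigr)^{\{2\}},\ 2l^3,\ (2n)^{\{2\}}
\]
then lies in $[n,2n]$, is strictly increasing in its distinct values, uses each integer at most twice, and has product $8n^3l^3(l+1)^6=\bigl(2nl(l+1)^2\bigr)^3$, a cube; hence $g_3(n)\le 2n$ for all non-cube $n\ge 65$. The strict inequalities above also guarantee that $(l+1)^3$, $2l^3$ and $2n$ are pairwise distinct, so the multiplicity bound is respected.

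It remains to treat the finitely many non-cube $n$ with $2\le n\le 63$ and $n\neq 4$. Several of these are already covered by the construction above, namely those $n$ admitting a cube in each of $(n/2,n)$ and $(n,2n)$ (for example $n\in\{14,15\}\cup\{33,\dots,53\}$), and the rest I would dispatch by exhibiting explicit $3$-product sequences, for instance $2,4$ for $n=2$; $3,4^{\{2\}},6^{\{2\}}$ for $n=3$; $5^{\{2\}},6^{\{2\}},9^{\{2\}},10$ (product $90^3$) for $n=5$; and $6,9^{\{2\}},12$ (product $18^3$) for $n=6$. I expect this finite verification to be the main obstacle: unlike the large-$n$ regime there is no single uniform pattern, the half-intervals around small $n$ frequently contain no cubes, and one must instead combine nearby squares and smooth numbers case by case, which is most naturally organised as a short table or a computer check. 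Pinning down the exact threshold $n\ge 65$ and confirming that the boundary sequences keep all multiplicities at most $2$ are the remaining technical points.
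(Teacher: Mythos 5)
Your overall architecture is sound and two of the three pieces are complete: the exclusion argument showing $g_3(4)>8$ is correct (the paper's Appendix reaches the same conclusion, $g_3(4)=9$, by a slightly different route via Lemmas \ref{lemma:p} and \ref{lemma:not m-th power}), and your large-$n$ construction $n,\ ((l+1)^3)^{\{2\}},\ 2l^3,\ (2n)^{\{2\}}$ with $l=\lfloor n^{1/3}\rfloor$ is a valid $3$-product sequence inside $[n,2n]$ once $l\ge 4$; it is essentially the $k=1$ case of the paper's Lemma \ref{lemma:r bound}, and your threshold $n\ge 65$ is even a little sharper than the paper's $C(3,1)<114$. The genuine gap is the middle range: after removing cubes and the values caught by your ``cube in each of $(n/2,n)$ and $(n,2n)$'' criterion, roughly thirty non-cube values of $n$ (namely $7$, $9$--$13$, $16$--$26$, $28$--$32$, $54$--$63$) are left to an unspecified ``short table or computer check''. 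As written the proof is therefore incomplete, and your remark that there is ``no single uniform pattern'' for these cases is exactly where you diverge from the paper.

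The paper closes this range with a uniform device you did not find: one does not need a cube in each half-interval, only a single integer of the form $2k^3$ or $4k^3$ lying in $[n,2n]$, because $n\cdot(2k^3)\cdot(2n)^2=(2kn)^3$ and $n^2\cdot(4k^3)\cdot(2n)=(2kn)^3$, and each of these repeats no integer more than twice. The five numbers $16=2\cdot2^3$, $32=4\cdot2^3$, $54=2\cdot3^3$, $108=4\cdot3^3$ and $128=2\cdot4^3$ then cover all of $9\le n\le 128$ in five one-line identities, which is precisely the finite verification you deferred ($n=7$ still needs its own sequence, e.g.\ $7^2\cdot 9\cdot 12\cdot 14=42^3$, as in the Appendix). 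Substituting this observation, or an explicit table, for your pending check would make your proof complete.
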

\begin{proof}
    By Lemma \ref{lemma:r bound}, we see that $C(3,1)<114$. Therefore $g_3(n)\leq 2n$ for all $n\geq 114$. In the Appendix, we have shown that $g_3(n)\leq 2n$ for $n\leq 7$ and $n\neq 4$. We also show that $g_3(4)=9>8$. It remains to show the result holds for $8\leq n\leq 113$.
    
    By Lemma \ref{lemma:m-th power}, we see that the result holds for $8$. For $9\leq n\leq 16$ we have $n\cdot 16\cdot (2n)^2=(4n)^3$. For $17\leq n\leq 31$ we have $n^2\cdot 32\cdot 2n=(4n)^3$. For $32\leq n\leq 54$ we have $n\cdot 54\cdot (2n)^2=(6n)^3$. For $55\leq n\leq 107$ we have $n^2\cdot 108\cdot 2n=(6n)^3$. For $108\leq n\leq 128$ we have $n\cdot 128\cdot (2n)^2=(8n)^3$. Therefore $g_3(n)\leq 2n$ if $8\leq n\leq 113$. 
    \end{proof}

We will now prove this for prime $m\geq 5$ and split the proof into cases based on whether $n$ is a power of $2$ or not.

\begin{lemma}\label{lemma:2^k}
    Let $p\geq 5$ be a prime. Then $g_p(2^k)\leq 2^{k+1}$ for all $k\geq 0$.
\end{lemma}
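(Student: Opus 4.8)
The plan is to exhibit, for each $k$, an explicit $p$-product sequence that starts at $2^k$, never exceeds $2^{k+1}$, and uses only $2$- and $3$-smooth numbers. Since the only powers of $2$ in $[2^k,2^{k+1}]$ are $2^k$ and $2^{k+1}$, I would first ask when copies of just these two suffice: taking $a$ copies of $2^k$ and $b$ copies of $2^{k+1}$ yields $2$-exponent $ka+(k+1)b$, so the product is a $p$-th power exactly when $ka+(k+1)b\equiv 0\pmod p$. This reduces the lemma to a congruence in $a,b$, and I would split on the residue of $k$ modulo $p$.

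First, the two easy cases. If $p\mid k$ then $2^k$ is itself a $p$-th power, so Lemma \ref{lemma:m-th power} gives $g_p(2^k)=2^k\le 2^{k+1}$. If $p\nmid k$ and $p\nmid(k+1)$, I would take $b=1$ and solve $a\equiv -k^{-1}(k+1)\pmod p$; since $p\nmid(k+1)$ this forces $a\not\equiv 0$, so a valid $a\in\{1,\dots,p-1\}$ exists, and the sequence $(2^k)^{\{a\}},2^{k+1}$ is a $p$-product sequence witnessing $g_p(2^k)\le 2^{k+1}$.

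The one genuinely obstructed case — and the main difficulty — is $k\equiv -1\pmod p$, where the congruence $ka+(k+1)b\equiv 0$ degenerates to $a\equiv 0\pmod p$, so no combination of $2^k,2^{k+1}$ works. Here I would bring in the two $3$-smooth numbers $3\cdot 2^{k-1}$ and $27\cdot 2^{k-4}$, both of which lie strictly inside $(2^k,2^{k+1})$ (they equal $\tfrac32 2^k$ and $\tfrac{27}{16}2^k$), using that $k\ge p-1\ge 4$ guarantees $k-4\ge 0$. I would take one copy of $2^k$, $p-3$ copies of $3\cdot 2^{k-1}$, and one copy of $27\cdot 2^{k-4}$. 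The $3$-exponent is then $(p-3)+3=p$, and a short computation gives the $2$-exponent as $(k-1)p-(k+1)$; both are divisible by $p$ precisely because $p\mid(k+1)$, so the product is a $p$-th power, and the largest term $27\cdot 2^{k-4}<2^{k+1}$ gives $g_p(2^k)<2^{k+1}$.

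The key point making the hard case work is the choice of exponents $1$ and $3$ on the factor $3$: the seemingly natural pair $3\cdot 2^{k-1}$ and $9\cdot 2^{k-3}$ is useless because $(3\cdot 2^{k-1})^2=2^{k+1}\cdot(9\cdot 2^{k-3})$ makes it linearly dependent on the powers of $2$ modulo $p$, again forcing $a\equiv 0$. Passing to $27\cdot 2^{k-4}$ breaks this dependence and frees up $a$. The remaining routine checks are that all multiplicities lie in $\{1,\dots,p-1\}$ (which needs $p\ge 5$, so that $p-3\ge 2$) and that the three values are distinct and correctly ordered; I would sanity-check the construction on a small instance such as $p=5,k=4$, where the sequence $16,24,24,27$ has product $12^5$.
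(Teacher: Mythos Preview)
Your proof is correct and follows the same three-way split on $k\bmod p$ as the paper, with the same idea of inserting $3$-smooth numbers in the obstructed case $k\equiv -1\pmod p$. The only difference is cosmetic: in that case the paper uses the four distinct values $2^k,\ 9\cdot 2^{k-3},\ 3\cdot 2^{k-1},\ 27\cdot 2^{k-4}$ with multiplicities $1,1,p-5,1$, whereas your three-term sequence $2^k,\ (3\cdot 2^{k-1})^{\{p-3\}},\ 27\cdot 2^{k-4}$ is a slightly cleaner witness.
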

\begin{proof}
If $k\equiv 0\mod p$, then $g_p(2^k)=2^k<2^{k+1}$ by Lemma \ref{lemma:m-th power}.

If $k\equiv -1\mod p$, then let $k=sp-1$ for $s\geq 1$. It follows that $$2^{sp-1}<3^2\cdot 2^{sp-4}<3\cdot 2^{sp-2}<3^3\cdot 2^{sp-5}<2^{sp}.$$ Then as $(2^{sp-1})\cdot (3^2\cdot 2^{sp-4})\cdot (3\cdot 2^{sp-2})^{p-5}\cdot (3^3\cdot 2^{sp-5})$ is a $p$-th power, we see that $g_p(2^k)< 2^{k+1}$.

If $k\not\equiv 0\mod p$ and $k\not\equiv -1\mod p$, then let $k\equiv a\mod p$. As $(2^k)^{a+1}\cdot (2^{k+1})^{p-a}$ is a $p$-th power and $a+1\not\equiv 0\mod p$, we see that $g_p(2^k)\leq 2^{k+1}$.

\end{proof}

\begin{lemma}\label{lemma:not 2^k}
    Let $p\geq 5$ be a prime and $n$ not a power of $2$. Then $g_p(n)\leq 2n$.
\end{lemma}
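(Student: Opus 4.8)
The plan is to reduce the whole statement to one flexible construction and then to exhibit a suitable ``helper'' term inside $(n,2n)$. Write $n = 2^a b$ with $b$ odd (so $b\ge 3$, as $n$ is not a power of $2$). Call an integer $M\in(n,2n)$ \emph{admissible} if $v_q(M)\equiv 0\pmod p$ for every odd prime $q$ and $v_2(M)=:\gamma\not\equiv 0\pmod p$. Given an admissible $M$, choose multiplicities $i,j,k\in\{1,\dots,p-1\}$ with $i+j=p$ and $j+\gamma k\equiv 0\pmod p$; the latter is solvable because $\gamma$ is invertible mod $p$. In $n^{i}M^{k}(2n)^{j}$ every odd prime then occurs to a multiple of $p$ (the contributions of $n$ and $2n$ combine to $v_q(b)(i+j)=v_q(b)\,p$, and $M$ contributes a multiple of $p$ by admissibility), while the exponent of $2$ equals $ai+(a+1)j+\gamma k\equiv j+\gamma k\equiv 0\pmod p$; hence the product is a perfect $p$-th power. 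Since $M$ is distinct from $n$ and $2n$, the sorted sequence $n^{\{i\}},M^{\{k\}},(2n)^{\{j\}}$ is a $p$-product sequence starting at $n$ with largest term $2n$, so $g_p(n)\le 2n$. Thus the lemma reduces entirely to finding an admissible $M$.

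Since $n$ is not a power of $2$, the interval $(n,2n)$ contains exactly one power of two $2^{t}$, with $2^{t-1}<n<2^{t}<2n$. \emph{Main case:} if $t\not\equiv 0\pmod p$ then $M=2^{t}$ is admissible and we are done at once. This is the generic situation and the easy part.

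\emph{Bad case:} $t\equiv 0\pmod p$, so the available power of two is itself a $p$-th power and is useless ($\gamma\equiv 0$). Here I mimic the $m=3$ argument, where in the analogous ranges the helper was $54=2\cdot 3^{3}$ or $108=2^{2}\cdot 3^{3}$, i.e.\ a number of the form $2^{\gamma}3^{p}$. Set $\gamma_0=t-\lfloor p\log_2 3\rfloor$; then $\log_2\!\big(2^{\gamma_0}3^{p}\big)=t+\{p\log_2 3\}\in(t,t+1)$, so $2^{\gamma_0}3^{p}\in(2^{t},2^{t+1})$ and $2^{\gamma_0-1}3^{p}\in(2^{t-1},2^{t})$. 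Exactly one of these lands in $(n,2n)$: if $2^{\gamma_0-1}3^{p}>n$ it lies in $(n,2^{t})\subseteq(n,2n)$, and otherwise $2^{\gamma_0}3^{p}=2\cdot 2^{\gamma_0-1}3^{p}\le 2n$ lies in $(2^{t},2n)$. The essential point is that whichever one is used is admissible: its $2$-exponent is $\gamma_0\equiv -\lfloor p\log_2 3\rfloor$ or $\gamma_0-1\equiv-\lfloor p\log_2 3\rfloor-1\pmod p$, and because $\log_2 3=1.58\ldots$ and $p\ge 5$ there is no integer $c$ with $c\le\log_2 3<c+\tfrac1p$ or $c-\tfrac1p\le\log_2 3<c$, forcing $\lfloor p\log_2 3\rfloor\not\equiv 0$ and $\not\equiv -1\pmod p$. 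Hence both candidate exponents are nonzero mod $p$, and the construction of the first paragraph applies.

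The main obstacle is the thin edge family where this helper degenerates, namely $n=2^{c}3^{p}$ (so that $2^{\gamma_0-1}3^{p}=n$ and $2^{\gamma_0}3^{p}=2n$). I would dispatch it directly. If $p\mid c$ then $n=(2^{c/p}\cdot 3)^{p}$ is a $p$-th power, so $g_p(n)=n<2n$ by Lemma \ref{lemma:m-th power}. Otherwise the odd part $3^{p}$ of $n$ is already a $p$-th power, so in $n^{i}(2n)^{j}=2^{ci+(c+1)j}\,3^{p(i+j)}$ the exponent of $3$ is automatically a multiple of $p$, and it suffices to solve $ci+(c+1)j\equiv 0\pmod p$ with $i,j\in\{1,\dots,p-1\}$; taking $j\equiv c$, $i\equiv-(c+1)$ works whenever $c\not\equiv 0,-1\pmod p$. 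The single remaining class $c\equiv -1\pmod p$ I would treat with a helper of the form $2^{\gamma}5^{p}$, the same floor computation with $\log_2 5=2.32\ldots$ guaranteeing a usable $2$-exponent for $p\ge 5$. Verifying the placement and admissibility of the helper in the bad case, together with these finitely many degenerate residues, is where the real work lies; everything else is the uniform construction, and for sufficiently large $n$ the bound is in any case subsumed by Lemma \ref{lemma:r bound} with $k=1$.
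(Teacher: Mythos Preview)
Your reduction to a single admissible helper $M$ and the main case $t\not\equiv 0\pmod p$ are correct, and the idea of using $2^{\gamma}3^{p}$ in the bad case is natural. However, there is a genuine gap when $t=p$, the smallest value allowed in the bad case. There $\gamma_0 = t - \lfloor p\log_2 3\rfloor = p - \lfloor p\log_2 3\rfloor < 0$ (since $\log_2 3>1$), so neither $2^{\gamma_0}3^{p}$ nor $2^{\gamma_0-1}3^{p}$ is an integer. Worse, your whole admissible-helper framework cannot cover this range: an admissible $M$ must have odd part a perfect $p$-th power, and for $p\ge 5$ the smallest such odd part exceeding $1$ is $3^{p}>2^{p+1}$, so the only candidates in $(n,2n)\subset(2^{p-1},2^{p+1})$ are pure powers of $2$, and the unique one there is $2^{p}$, which fails $\gamma\not\equiv 0$. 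Hence for every non-power-of-$2$ with $2^{p-1}<n<2^{p}$ your argument produces nothing. Your edge-case analysis does not help (it only treats $n$ of the special shape $2^{c}3^{p}$), nor does the appeal to Lemma~\ref{lemma:r bound}, whose threshold $C(p,1)=(1-2^{-1/p})^{-p}$ is vastly larger than $2^{p}$. The same defect afflicts your proposed $2^{\gamma}5^{p}$ fallback.

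The paper sidesteps this by dropping the requirement that the helper's odd part be a $p$-th power. It uses \emph{two} helpers at once, of the forms $3\cdot 2^{k-1}$ (or $3\cdot 2^{k-2}$) and $3^{3}\cdot 2^{k-4}$ (or $3^{3}\cdot 2^{k-5}$), with multiplicities $a$ and $(p-a)/3$ (where $a\equiv p\pmod 3$) so that their combined contribution to the $3$-exponent is exactly $a+3\cdot\frac{p-a}{3}=p$. These helpers require only $k\ge 5$, which is automatic since $k\ge p\ge 5$. Balancing the odd-prime exponent across two small helpers, rather than insisting a single helper carry a full $p$-th power in its odd part, is the idea your argument is missing.
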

\begin{proof}
    There exists a unique $k$ such that $n<2^k<2n$. 
    
    If $k\neq 0\mod p$, then letting $k\equiv a\mod p$, we see that $n^a\cdot 2^k\cdot (2n)^{p-a}$ is a $p$-th power. Therefore $g_p(n)\leq 2n$. 

    So let $k\equiv 0\mod p$, which implies that $k\geq p\geq 5$. Either $n<3\cdot2^{k-1}<2n$ or $n\leq 3\cdot2^{k-2}<2n$. Similarly, either $n<3^3\cdot2^{k-4}<2n$ or $n\leq3^3\cdot2^{k-5}<2n$. 

    \textbf{Case 1: $n<3\cdot 2^{k-1}<3^3\cdot2^{k-4}<2n$.}

    Let $a\equiv p\mod 3$, with $a\in\{1,2\}$ and $b\equiv \frac{a-p}{3}\mod p$ with $b\in\{1,2,\ldots,p\}$. As $a\not\equiv 0\mod p$, we see that $b\equiv\frac{a-p}{3}\not \equiv 0\mod p$. Then $n^b\cdot (3\cdot 2^{k-1})^a\cdot (3^3\cdot 2^{k-4})^{\frac{p-a}{3}}\cdot (2n)^{p-b}$ is a $p$-th power, so it follows that $g_p(n)\leq 2n$.

    \textbf{Case 2: $n\leq3^3\cdot2^{k-5}<3\cdot 2^{k-1}<2n$.}

    Let $a\equiv p\mod 3$, with $a\in\{1,2\}$ and $b\equiv \frac{2a-2p}{3}\mod p$ with $b\in\{1,2,\ldots,p\}$. Similarly to Case 1, as $a\not\equiv 0\mod p$ we have that $b\equiv \frac{2a-2p}{3}\not\equiv 0\mod p$ and $\frac{p-a}{3}+b\equiv\frac{a-p}{3}\not\equiv 0\mod p$. Then $n^b\cdot (3^3\cdot 2^{k-5})^{\frac{p-a}{3}}\cdot (3\cdot 2^{k-1})^a\cdot (2n)^{p-b}$ is a $p$-th power, so it follows that $g_p(n)\leq 2n$.

    \textbf{Case 3: $n\leq3\cdot 2^{k-2}<3^3\cdot2^{k-4}<2n$.}

    This is not possible as $n\leq 3\cdot 2^{k-2}$ implies that $2n\leq 3\cdot 2^{k-1}<3^3\cdot2^{k-4}<2n$ which is a contradiction.

    \textbf{Case 4: $n\leq3\cdot 2^{k-2}<3^3\cdot2^{k-5}<2n$.}

    Let $a\equiv p\mod 3$, with $a\in\{1,2\}$ and $b\equiv -\frac{a+2p}{3}\mod p$ with $b\in\{1,2,\ldots,p\}$. Similarly to above, as $a\not\equiv 0\mod p$ we see that $b\equiv -\frac{a+2p}{3}\not\equiv 0\mod p$ and $a+b\equiv \frac{2a-2p}{3}\not\equiv 0\mod p$. Then $n^b\cdot (3\cdot 2^{k-2})^a\cdot (3^3\cdot 2^{k-5})^{\frac{p-a}{3}}\cdot (2n)^{p-b}$ is a $p$-th power, so it follows that $g_p(n)\leq 2n$.
\end{proof}

We will use the following lemma to prove this for composite $m$.

\begin{lemma}\label{lemma:divisible}
    If $l|m$, then $g_m(n)\leq g_l(n)$.
\end{lemma}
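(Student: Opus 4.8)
The plan is to take a corresponding sequence witnessing $g_l(n)$ and turn it into an $m$-product sequence with the same smallest and largest terms by simply repeating it the right number of times. Since $l \mid m$, write $m = lq$ with $q = m/l \in \mathbb{N}$ and $q \geq 1$. Let $a_1 \leq a_2 \leq \ldots \leq a_t$ be a corresponding sequence for $g_l(n)$, so that $a_1 = n$, $a_t = g_l(n)$, the product $\prod_{i=1}^t a_i = R^l$ for some $R \in \mathbb{N}$, and no integer appears more than $l-1$ times.

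First I would form the sequence $C$ consisting of $q$ copies of $a_1,\ldots,a_t$, re-sorted into non-decreasing order. Its product is $\bigl(\prod_{i=1}^t a_i\bigr)^q = (R^l)^q = R^m$, which is an $m$-th power, and its smallest and largest terms are still $n$ and $g_l(n)$ respectively. The one condition that must be verified is the multiplicity constraint for an $m$-product sequence: no integer may appear more than $m-1$ times.

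The key (and essentially only) step is this multiplicity check. If a value $v$ appears $\mu_v$ times in the original sequence, then $\mu_v \leq l-1$, and in $C$ it appears exactly $q\mu_v$ times. Using $q \geq 1$, we get
$$q\mu_v \leq q(l-1) = lq - q \leq lq - 1 = m-1,$$
so every integer appears at most $m-1$ times in $C$. Hence $C$ is a genuine $m$-product sequence.

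Putting this together, $C$ is an $m$-product sequence starting at $n$ and ending at $g_l(n)$, so by the definition of $g_m$ as a least value we conclude $g_m(n) \leq g_l(n)$. I do not anticipate a real obstacle here: the inequality $q(l-1) \leq m-1$ is the whole content, and it holds precisely because $q \geq 1$, which is exactly the divisibility hypothesis $l \mid m$ in force.
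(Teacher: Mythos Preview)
Your proof is correct and is essentially identical to the paper's: both take a corresponding $l$-product sequence for $g_l(n)$, replicate each term $m/l$ times, and verify that the resulting sequence is an $m$-product sequence via the same product and multiplicity checks (the paper writes the bound as $(l-1)\tfrac{m}{l}<m$, which is your $q(l-1)\leq m-1$).
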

\begin{proof}
    If $g_l(n)=k$, then there exists a corresponding sequence for $g_l(n)$, namely $n=a_1\leq \ldots\leq a_t=k$. Then consider the sequence $n=a_1^{\{\frac{m}{l}\}}, a_2^{\{\frac{m}{l}\}},\ldots,a_t^{\{\frac{m}{l}\}}=k$ and label the terms of the sequence $n=b_1\leq \ldots \leq b_{t'}=k$. We claim that this is a $m$-product sequence starting at $n$ and ending at $k$, therefore showing that $g_m(n)\leq k=g_l(n)$. First note that all the integers in the sequence are contained in $[n,k]$ as they are the same integers in the corresponding sequence for $g_l(n)$. Additionally, $\prod_{i=1}^{t'} b_i=(\prod_{i=1}^t a_i)^{\frac{m}{l}}=(R^l)^{\frac{m}{l}}=R^m$ for some $R\in\mathbb{N}$. Finally, as each integer appeared at most $l-1$ times in $a_1,\ldots,a_t$, we see that each integer appears at most $(l-1)\frac{m}{l}<m$ times in $b_1,\ldots,b_{t'}$. 
\end{proof}

On a side note, as $C(m,k)=\left(1-\left(\frac{k}{k+1}\right)^{\frac{1}{m}}\right)^{-m}$ from Lemma \ref{lemma:r bound} is a strictly increasing function in $m$, we can use Lemma \ref{lemma:divisible} to achieve a smaller bound for composite $m$. Letting $p$ be the smallest prime that divides $m$, for $n>C(p,k)$ we have that $g_m(kn)\leq g_p(kn)\leq (k+1)n$.

Lemma \ref{lemma:divisible} allows us to prove the following generalisation to Lemma \ref{lemma:2n}. 
\begin{theorem}\label{theorem:upper}
    For $m\geq 2$ and $n\in\mathbb{N}$, $g_{m}(n)\leq 2n$ if and only if $(m,n)\neq\{(2,2),(2,3),(3,4)\}$.
\end{theorem}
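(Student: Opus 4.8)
The plan is to combine the special cases $m=2$ and $m=3$ already established (Lemmas~\ref{lemma:2n} and~\ref{lemma:3}) with the prime case $m\geq 5$ (Lemmas~\ref{lemma:2^k} and~\ref{lemma:not 2^k}) and then bootstrap to all composite $m$ via the divisibility relation of Lemma~\ref{lemma:divisible}. First I would dispose of the three exceptional pairs by citing the computations already in the text: $g_2(2)=6>4$, $g_2(3)=8>6$, and $g_3(4)=9>8$, so the ``only if'' direction is immediate once we show these are the \emph{only} failures. The bulk of the argument is the ``if'' direction, i.e.\ that $g_m(n)\leq 2n$ in all other cases.

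For the forward direction I would organise by the value of $m$. For $m=2$, Lemma~\ref{lemma:2n} gives $g_2(n)\leq 2n$ for all $n\geq 4$, and the small values $n=0,1$ are squares (or trivial) so $g_2(n)=n\leq 2n$; the only genuine exceptions are $n=2,3$. For $m=3$, Lemma~\ref{lemma:3} states precisely that $g_3(n)\leq 2n$ iff $n\neq 4$, handling that row completely. For prime $m=p\geq 5$, I would split on whether $n$ is a power of $2$: Lemma~\ref{lemma:2^k} covers $n=2^k$ and Lemma~\ref{lemma:not 2^k} covers all other $n$, together giving $g_p(n)\leq 2n$ with \emph{no} exceptions. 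Thus for every prime $m$, the bound $g_m(n)\leq 2n$ holds except for the listed pairs.

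The composite case is then a one-line consequence of Lemma~\ref{lemma:divisible}. If $m$ is composite, let $p$ be a prime divisor of $m$ with $p\leq m$; since $m\geq 4$ is composite, such a $p$ exists and moreover we may choose $p\geq 2$. Applying $g_m(n)\leq g_p(n)$ from Lemma~\ref{lemma:divisible} reduces the composite bound to a prime bound. The only subtlety is that if $2\mid m$ we inherit the potential failures of $g_2$ at $n=2,3$, and if $3\mid m$ (but $4\nmid m$, say) we might inherit the failure of $g_3$ at $n=4$; so I must check these do not produce new exceptional pairs for $m\geq 4$. Concretely, for any composite $m\geq 4$ one can pick a prime divisor $p$ whose exceptional set is avoided, or verify directly that the bound still holds at the small $n$ in question---this is where I expect the only real care to be needed.

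The main obstacle, then, is not any single hard estimate but rather the bookkeeping of exceptional values when passing through Lemma~\ref{lemma:divisible}: I must confirm that for every composite $m\geq 4$ and every $n$, some prime divisor $p\mid m$ satisfies $g_p(n)\leq 2n$, so that the only surviving exceptions across all $m$ are exactly $(2,2),(2,3),(3,4)$. For instance, at $n=4$ with $3\mid m$ I would use a different prime divisor of $m$ if available, or note $g_m(4)\leq g_2(4)=4\leq 8$ whenever $2\mid m$; and at $n=2,3$ with $2\mid m$ I would instead route through an odd prime divisor of $m$ if one exists. Handling $m$ that are prime powers (where only one prime divisor is available, e.g.\ $m=4,8,9,\dots$) is the delicate sub-case, and I would dispatch those by direct inspection of the relevant small $n$, confirming that none reproduces an exceptional pair. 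Once this verification is complete, collecting the cases yields the stated biconditional.
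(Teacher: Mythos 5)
Your proposal is correct and follows essentially the same route as the paper: establish the bound for prime $m$ via Lemmas~\ref{lemma:2n}, \ref{lemma:3}, \ref{lemma:2^k} and \ref{lemma:not 2^k}, reduce composite $m$ to a prime divisor via Lemma~\ref{lemma:divisible}, and settle the small values $n\leq 4$ (where the inherited prime bound can fail) by direct computation. The paper does exactly this, always routing through the smallest prime divisor of $m$ and citing the Appendix for $g_m(n)$ with $0\leq n\leq 4$, which is the ``direct inspection'' fallback you describe.
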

\begin{proof}
    Let $p$ be the smallest prime that divides $m$. For all $n\geq 5$ we have $g_{m}(n)\leq g_p(n)\leq 2n$ by Lemmas \ref{lemma:2n}, \ref{lemma:3}, \ref{lemma:2^k}, \ref{lemma:not 2^k} and \ref{lemma:divisible}. We show in the Appendix that $g_{m}(0)=0$ and $g_{m}(1)=1$ for all $m\geq 2$, that $g_{m}(2)=4$ and $g_{m}(3)=6$ for $m\geq 3$ and that $g_m(4)\leq 8$ for $m\neq 3$.

    The Appendix also includes the proofs that $g_2(2)=6, g_2(3)=8$ and $g_3(4)=9$.
\end{proof}

\begin{corollary}\label{corollary:p}
    For any $m\geq 2$ and prime $p$, we have $g_m(p)=2p$ if and only if $(m,p)\neq \{(2,2),(2,3)\}$.
\end{corollary}

\begin{proof}
    This follows from Theorem \ref{theorem:upper} and Corollary \ref{lemma:2p}. 
\end{proof}

Theorems \ref{conj} and \ref{theorem:upper}, and calculations of $g_m(n)$ for small $n$, leads us to conjecture the following:

\begin{conjecture}
For all $m\geq 2$ and all $n\in\mathbb{N}$, we have $g_m(n)=2n$ if and only if:
\begin{enumerate}
    \item $n$ is prime and $(m,n)\neq\{(2,2),(2,3)\}$

    \item $n=0$

    \item $n=4$ for odd $m>3$ 

    \item $(m,n)=\{(2,6),(3,6),(6,6)\}$.
\end{enumerate}
\end{conjecture}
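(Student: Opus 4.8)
The plan is to prove both directions simultaneously by observing, via Theorem~\ref{theorem:upper}, that $g_m(n)=2n$ can only fail because $g_m(n)<2n$; thus the whole statement reduces to deciding, for each $(m,n)$, whether a strict inequality holds. I would first dispose of the easy cases. Condition~1 is exactly Corollary~\ref{corollary:p}, settling all prime $n$ in both directions. Condition~2 is $g_m(0)=0$, and $g_m(1)=1\neq 2$. For the three pairs $(2,2),(2,3),(3,4)$ that Theorem~\ref{theorem:upper} excludes, the known values $6,8,9$ give $g_m(n)>2n$, so these are correctly omitted. This already proves the `only if' direction for prime $n$, for $n\in\{0,1\}$, and for the exceptional pairs.

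The core is a uniform lift of Lemma~\ref{lemma:ab} to all $m$. Given $n=ab$ with $1<a\le b$, put $g_1=ab,\ g_2=a(b+1),\ g_3=(a+1)b,\ g_4=(a+1)(b+1)$ and note the identity $g_1g_4=g_2g_3$. If $a<b$ these are four distinct increasing integers, and the multiset taking $g_2,g_3$ each $m-1$ times and $g_1,g_4$ once has product $(g_1g_4)(g_2g_3)^{m-1}=(g_1g_4)^m$, an $m$-th power; hence $n=g_1,g_2^{\{m-1\}},g_3^{\{m-1\}},g_4$ is an $m$-product sequence and $g_m(n)\le(a+1)(b+1)$. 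If $a=b$ (so $n$ is a prime square, the only case with no $a<b$ factorisation), I would instead use $g_1^{\{m-1\}},(a(a+1))^{\{2\}},((a+1)^2)^{\{m-1\}}$, whose product is $\big(a^2(a+1)^2\big)^m$; this needs only $m\ge 3$, the remaining perfect-square case $m=2$ being immediate from Lemma~\ref{lemma:m-th power}. In both constructions the largest term is $(a+1)(b+1)$, and $(a+1)(b+1)<2ab$ is equivalent to $(a-1)(b-1)>2$ — exactly the condition from the $m=2$ proof. Since every composite $n\notin\{4,6\}$ admits a factorisation with $(a-1)(b-1)>2$, this yields $g_m(n)<2n$ for all such $n$ and all $m$, finishing the `only if' direction for composite $n$ other than $4,6$.

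What remains are $n=4$ and $n=6$, the only composites all of whose factorisations have $(a-1)(b-1)\le 2$. For $n=4=2^2$: if $m$ is even then $g_m(4)=4$ by Lemma~\ref{lemma:m-th power}; if $m=3$ then $g_3(4)=9$; and for odd $m>3$ I would combine the upper bound $g_m(4)\le 8$ from Theorem~\ref{theorem:upper} with a matching lower bound, noting that among $4,5,6,7$ the primes $5,7$ must be absent and (as $m$ is odd) the exponent of $2$ contributed by copies of $4$ cannot be made divisible by $m$ with fewer than $m$ copies, so no $m$-product sequence ends below $8$. This is precisely condition~3. For $n=6$ the only usable terms in $[6,11]$ are $6=2\cdot 3,\ 8=2^3,\ 9=3^2$, since $7,11$ and the prime $5$ inside $10$ can never be balanced; writing a candidate as $6^a8^b9^c$, the $m$-th power conditions are $a+3b\equiv 0$ and $a+2c\equiv 0\pmod m$ with $1\le a\le m-1$, and these are solvable (for $b,c\in[0,m-1]$) iff some admissible $a$ is divisible by both $\gcd(2,m)$ and $\gcd(3,m)$, i.e. by $\gcd(6,m)$. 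Such an $a\le m-1$ exists iff $m\nmid 6$, so $g_m(6)\le 9<12$ for $m\notin\{2,3,6\}$, while for $m\in\{2,3,6\}$ no sequence reaches below $12$ and the upper bound $g_m(6)\le 12$ (from Theorem~\ref{theorem:upper}, or Lemma~\ref{lemma:divisible} when $m=6$) gives $g_m(6)=12$. This is condition~4.

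The main obstacle is discovering the right generalisation of Selfridge's square identity. The key observation is that the ``grid'' values satisfy $g_1g_4=g_2g_3$, so repeating the two middle entries $m-1$ times turns their product into an exact $m$-th power for every exponent $m$ at once; this collapses the whole composite case to the single arithmetic condition $(a-1)(b-1)>2$ and avoids a separate residue analysis in $m$ for each $n$. Once this construction and its perfect-square variant are in hand, the only genuinely special inputs are $n=4$ and $n=6$, and even these reduce to finite, self-contained checks — the $\gcd(6,m)$ dichotomy for $n=6$ and a short support argument for $n=4$.
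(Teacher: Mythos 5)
You should first note that the paper does not actually prove this statement: it is recorded as a conjecture, and the author establishes only the ``if'' direction (Corollary~\ref{corollary:p} together with the Appendix computations of $g_m(4)$ and $g_m(6)$) plus the full equivalence for even moduli (the theorem on $g_{2m}$, which uses $g_{2m}(n)\le g_2(n)<2n$ via Lemma~\ref{lemma:divisible}). The ``only if'' direction for odd $m\ge 3$ and composite $n$ is left open, so your proposal goes beyond the paper rather than merely taking a different route. As far as I can check, your key new lemma is correct: for $n=ab$ with $1<a<b$, the sequence $ab,\,(a(b+1))^{\{m-1\}},\,((a+1)b)^{\{m-1\}},\,(a+1)(b+1)$ has product $\bigl(ab\cdot(a+1)(b+1)\bigr)\cdot\bigl(a(b+1)\cdot(a+1)b\bigr)^{m-1}=\bigl(ab(a+1)(b+1)\bigr)^{m}$, its four terms are distinct, lie in $[n,(a+1)(b+1)]$, and each appears at most $m-1$ times, so $g_m(ab)\le(a+1)(b+1)$ for every $m\ge 2$; the prime-square variant $(p^2)^{\{m-1\}},(p(p+1))^{\{2\}},((p+1)^2)^{\{m-1\}}$ is likewise valid for $m\ge 3$, and $m=2$ is covered by Lemma~\ref{lemma:m-th power}. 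Since every composite $n\notin\{4,6\}$ admits a factorisation with $(a-1)(b-1)>2$ (taking $a$ the least prime factor, or the square-variant when $n=p^2$, $p\ge 3$), this yields $g_m(n)<2n$ for all composite $n\notin\{0,4,6\}$ and all $m$, which is precisely the missing half of the conjecture; your finite analyses of $n=4$ (only powers of $4$ can occur below $8$, and $m\nmid 2k$ for $1\le k\le m-1$ when $m$ is odd) and of $n=6$ (support $\{6,8,9\}$ and solvability iff $\gcd(6,m)\mid a$ for some $1\le a\le m-1$, i.e.\ $m\nmid 6$) agree with the Appendix. The one caution I would give is procedural rather than mathematical: because the paper itself leaves this open, you should write the four-term construction out in full, confirm the multiplicity bound $m-1$ is respected in every case (it is exactly attained by the two middle terms), and test the resulting bounds against computed values of $g_m(n)$ for small odd $m$ before asserting that you have resolved the conjecture; I could not find a gap.
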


We have shown the if direction of the conjecture holds by combining Corollary 4.11 with the explicit small cases covered in the Appendix

We can prove that the full conjecture holds for $2m$ when $m\geq 2$.

\begin{theorem}
    For all $m\geq 2$ and all primes $p$, we have $g_{2m}(p)=2p$ if and only if $n$ is prime, $n=0$ or $(2m,n)=(6,6)$.
\end{theorem}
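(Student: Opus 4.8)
\section*{Proof proposal}

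I read the statement as asserting, for all $m\ge 2$ and all $n\in\mathbb N$ (the "if and only if" conditions refer to $n$, not to a prime $p$), that $g_{2m}(n)=2n$ exactly when $n$ is prime, $n=0$, or $(2m,n)=(6,6)$. The plan is to reduce everything to two facts already in hand: the universal upper bound $g_{2m}(n)\le 2n$, which holds because $2m\ge 4$ and so none of the exceptional pairs $(2,2),(2,3),(3,4)$ of Theorem \ref{theorem:upper} can equal $(2m,n)$; and the divisibility bound $g_{2m}(n)\le g_2(n)$ from Lemma \ref{lemma:divisible}, using $2\mid 2m$. Because of the upper bound, proving $g_{2m}(n)=2n$ is the same as ruling out $g_{2m}(n)<2n$, so the whole statement splits into showing equality in the three listed cases and strict inequality otherwise.

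For the ``if'' direction I would argue as follows. If $n$ is prime then $(2m,n)\ne(2,2),(2,3)$ since $2m\ge 4$, so Corollary \ref{corollary:p} gives $g_{2m}(n)=2n$ at once. The case $n=0$ is the boundary value $g_{2m}(0)=0$ recorded in the Appendix. The genuinely special case is $(2m,n)=(6,6)$: the upper bound $g_6(6)\le g_2(6)=12$ follows from Lemma \ref{lemma:divisible} together with the known value $g_2(6)=12$, and for the matching lower bound I would show that no $6$-product sequence can live inside $[6,11]$. The key observation is that among $6,7,8,9,10,11$ the integers $7,10,11$ carry the primes $7,5,11$, each of which occurs in only one of these integers; since no term may be repeated $6$ or more times, the exponent of such a prime in the product is at most $5$ and so can be a multiple of $6$ only if that term is absent. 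Hence a sequence in $[6,11]$ uses only $6,8,9$, and writing $6^a8^b9^c=2^{a+3b}3^{a+2c}$ reduces the question to solving $a+3b\equiv 0$ and $a+2c\equiv 0\pmod 6$ with $1\le a\le 5$. The second congruence forces $a$ even, the first forces $a\equiv 0$ or $3\pmod 6$, and these are incompatible for $1\le a\le 5$; so no such sequence exists and $g_6(6)\ge 12$, giving $g_6(6)=12$.

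For the ``only if'' direction I would prove the contrapositive: assuming $n$ is not prime, $n\ne 0$, and $(2m,n)\ne(6,6)$, show $g_{2m}(n)<2n$. When $n\ne 6$ this is immediate, since $g_{2m}(n)\le g_2(n)$ by Lemma \ref{lemma:divisible} and $g_2(n)<2n$ holds for every non-prime $n\notin\{0,6\}$ by the lemma preceding Theorem \ref{conj} (which also covers $n=1$, where $g_2(1)=1$). The remaining case is $n=6$ with $2m\ne 6$, where I would exhibit the explicit sequence $6^a8^b9^c$ again. I want multiplicities with $a+3b\equiv 0$ and $a+2c\equiv 0\pmod{2m}$, $1\le a\le 2m-1$, and $0\le b,c\le 2m-1$. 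The congruence for $c$ is solvable exactly when $a$ is even, and the congruence for $b$ exactly when $\gcd(3,2m)\mid a$. If $3\nmid 2m$ take $a=2$; if $3\mid 2m$ take $a=6$, which is admissible because $3\mid 2m$ with $2m$ even forces $6\mid 2m$, so $2m\ne 6$ yields $2m\ge 12$ and hence $a=6\le 2m-1$. In either case $b$ and $c$ can be chosen in range, producing a $2m$-product sequence whose largest term is at most $9$, so $g_{2m}(6)\le 9<12=2\cdot 6$.

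The main obstacle is concentrated entirely in the $n=6$ analysis, which is precisely where the exceptional pair $(6,6)$ lives. Both the lower bound $g_6(6)\ge 12$ and the construction for $2m\ne 6$ rest on the same arithmetic, namely the solvability of the two congruences arising from $6^a8^b9^c=2^{a+3b}3^{a+2c}$, and the reason $(6,6)$ is genuinely exceptional is exactly that the only admissible exponent $a=6$ is forbidden there by $a\le 2m-1=5$. Everything else collapses to the $m=2$ results through Lemma \ref{lemma:divisible}, so the delicate part is making the congruence bookkeeping for $\{6,8,9\}$ airtight across the two divisibility-by-$3$ cases and verifying that the constructed exponents indeed fall within the allowed ranges.
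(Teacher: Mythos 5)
Your proposal is correct and follows essentially the same route as the paper: reduce every non-prime $n\notin\{0,1,6\}$ to the $m=2$ case via Lemma \ref{lemma:divisible} and the bound $g_2(n)<2n$, and settle $n=6$ by the congruence analysis of $6^a8^b9^c=2^{a+3b}3^{a+2c}$ (the paper relegates exactly this computation, together with $g_m(1)=1$ and $g_6(6)=12$, to its Appendix). Your explicit choices $a=2$ or $a=6$ are just a spelled-out version of the paper's sequence $6^6\cdot 8^{m-2}\cdot 9^{m-3}$ read modulo $2m$, so there is nothing substantively different to flag.
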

\begin{proof}
    We only need to show that for all $n$ not mentioned in the statement, $g_{2m}(n)< 2n$.
    
    For any $n\geq 4$ not equal to a prime, $0$ or $6$, by Theorem \ref{conj} and Lemmas \ref{lemma:2n} and \ref{lemma:divisible} we see that $g_{2m}(n)\leq g_2(n)<2n$. We show that $g_m(1)=1$ for all $m$ and that $g_{2m}(6)=9$ when $2m\neq 6$ in the Appendix.
\end{proof}

\section{Bijectivity of \texorpdfstring{$g_m(n)$}{gm(n)}}
We prove that $g_m(n)$ is a bijection between $\mathbb{N}$ and $\mathbb{N}\backslash\mathbb{P}$ when $m$ is prime by generalising the method outlined in $\cite{kagey2024conjecture}$ for $m=2$, which follows Michael Reid's proof \cite{efdab2f9-940e-3325-88c5-7e0949aa0d3d}. 
\begin{theorem}\label{theorem:injective}
The function $g_m(n)$ is injective for all prime $m$.
\end{theorem}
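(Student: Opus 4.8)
The plan is to prove injectivity by contradiction: suppose $g_m(n_1) = g_m(n_2) = s$ for two distinct inputs $n_1 < n_2$, and derive a contradiction by constructing an $m$-product sequence that is \emph{smaller} than one of the witnessing corresponding sequences. The key tool is Lemma~\ref{lemma:product} (and its iterate, Corollary~\ref{lemma:k product}), which lets me combine two $m$-product sequences sharing the same largest term $s$ but with different smallest terms into a new sequence $C = A \cdot B$ whose smallest term is the smaller of the two inputs and whose largest term is at most $s$. The strategy is to use the freedom in the exponent $k$ of $B^k$ to arrange that the largest term $s$ appears $0 \bmod m$ times in the combined sequence, so that all copies of $s$ can be deleted, yielding a legitimate $m$-product sequence ending strictly below $s$ and starting at $n_1$ — contradicting the minimality of $g_m(n_1) = s$.

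First I would fix corresponding sequences $A$ for $g_m(n_1)$ and $B$ for $g_m(n_2)$, both ending at $s$. Using Lemma~\ref{lemma:not m-th power} I may assume (when the relevant $n_i$ is not an $m$-th power) that these contain no $m$-th powers, which will matter for the prime case. Let $s_t$ be the number of appearances of $s=a_t$ in $A$ and $v_{t'}$ the number in $B$. By Corollary~\ref{lemma:k product}, the sequence $D = A \cdot B^k$ starts at $n_1$, has largest term at most $s$, and has $s$ appearing exactly $s_t + k v_{t'} \bmod m$ times. The heart of the argument is that since $m$ is \emph{prime} and $1 \le v_{t'} \le m-1$, we have $\gcd(v_{t'}, m) = 1$, so the map $k \mapsto s_t + k v_{t'} \bmod m$ hits every residue class; in particular I can choose $k$ making this quantity equal to $0$. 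Then $s$ appears $0 \bmod m$ times in $D$, and removing all copies of $s$ (a multiple of $m$ many) preserves the $m$-th-power property and leaves a sequence starting at $n_1$ with largest term strictly less than $s$. This forces $g_m(n_1) < s$, the desired contradiction.

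The main obstacle I anticipate is handling the degenerate cases where the combined sequence $D$ becomes trivial or where $s$ does not actually appear in $D$ after the construction. Specifically, after deleting the copies of $s$ and any other terms equal to $s$, I must verify that a nonempty sequence genuinely starting at $n_1$ and ending below $s$ survives; the possibility that \emph{all} terms collapse needs ruling out, using that $n_1$ itself (with its $s_1$ appearances, which Lemma~\ref{lemma:product} guarantees are preserved) remains in the sequence and is strictly less than $s$ whenever $g_m(n_1) = s > n_1$. I would also separately dispose of the edge case $n_1 = s$, i.e. $g_m(n_1) = n_1$: by Lemma~\ref{lemma:m-th power} this happens exactly when $n_1$ is a suitable perfect power, and then $n_2 > n_1 = s$ contradicts $n_2 \le g_m(n_2) = s$ outright, so this case is immediate.

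To make the argument airtight I would structure it as follows: assume $n_1 < n_2$ with $g_m(n_1) = g_m(n_2) = s$; observe $n_2 \le s$ forces $s > n_1$, so $n_1$ is not an $m$-th power and I may take $A$ free of $m$-th powers; form $D = A \cdot B^k$ with $k$ chosen so the top term $s$ appears $0 \bmod m$ times; delete all copies of $s$ to obtain an $m$-product sequence starting at $n_1$ with maximum strictly below $s$; conclude $g_m(n_1) < s$, a contradiction. The primality of $m$ enters crucially and only through the invertibility of $v_{t'}$ modulo $m$ — this is exactly where a composite $m$ would break the argument, since then some appearance counts $v_{t'}$ sharing a common factor with $m$ could leave the residue $s_t + k v_{t'} \bmod m$ unable to reach $0$, which is consistent with the paper's later claim that $g_m$ fails to be injective for composite $m$.
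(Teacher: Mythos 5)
Your proposal is correct and follows essentially the same route as the paper: take corresponding sequences $A$ and $B$ ending at the common value $s$, form $A\cdot B^{k}$ via Corollary~\ref{lemma:k product}, and use the primality of $m$ (invertibility of $v_{t'}$ modulo $m$) to choose $k$ so that $s$ appears $0 \bmod m$ times, contradicting the minimality of $g_m(n_1)$. Your explicit treatment of the degenerate case $g_m(n_1)=n_1$ is a sound (and slightly more careful) supplement to the paper's brief handling of $n=0$.
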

\begin{proof} 
First, note that as $n\leq g_m(n)$, there cannot exist non-zero $n\in\mathbb{N}$ such that $g_m(n)=0$.

Let $0<n<n'$ and assume $g_m(n)=g_m(n')$. Then there exists a corresponding sequence for $g_m(n)$, $$n=a_1\leq \ldots\leq a_t=g_m(n),$$ where $\prod_{i=1}^t a_i=R^m$ for some $R\in\mathbb{N}$.

There similarly exists a corresponding sequence for $g_m(n')$, $$n'=b_1\leq\ldots\leq b_{t'}=g_m(n')=g_m(n),$$ where $\prod_{i=1}^{t'}b_i=S^m$ for some $S\in\mathbb{N}$.

Let these sequences be $A$ and $B$ respectively. Let $n=a_1$ appear $1\leq s_1\leq m-1$ times in $A$ and $g_m(n)=a_t$ appear $1\leq s_t\leq m-1$ times in $A$. Let $g_m(n)=b_{t'}$ appear $1\leq v_{t'}\leq m-1$ times in $B$. 

Now consider the $m$-product sequence, $C_k=A\cdot B^k$ for $1\leq k\leq m$, whose terms are $c_1^k\leq\ldots\leq c_{s}^k$. Then, by Corollary \ref{lemma:k product}, $c_1^k=n$ appears $s_1$ times in $C_k$, $c_s^k\leq g_m(n)$ and $g_m(n)$ appears $s_t+kv_{t'}\mod m$ times in $C$.

As $m$ is prime and $1\leq v_{t'}\leq m-1$, we see that $kv_{t'}\equiv lv_{t'}\mod m$ if and only if $k\equiv l\mod m$. So for $k,l\in[1,m]$ and $k\neq l$, it is clear that $s_t+kv_{t'}\not\equiv s_t+lv_{t'}\mod m$. By the pigeonhole principle, there must then exist a $1\leq r\leq m$ such that $s_t+rv_{t'}\mod m = 0$. However then there is a $m$-product sequence, $C_r=A\cdot B^r$, starting at $n$ whose largest term is less than or equal to $g_m(n)$ and $g_m(n)$ appears $s_t+rv_{t'}\mod m = 0$ times in the sequence. Therefore $g_m(n)$ does not appear in the sequence and the largest term of the $m$-product sequence starting at $n$ is strictly less than $g_m(n)$. This contradicts the definition of $g_m(n)$ and so $g_m(n)$ is injective. 
\end{proof}

We split the proof of surjectivity over several lemmas. 

\begin{lemma}\label{lemma:range}
For all $m\geq 2$ and $n\in\mathbb{N}$, we have $g_m(n)\in \mathbb{N}\backslash \mathbb{P}$.
\end{lemma}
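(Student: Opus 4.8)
The plan is to show that $g_m(n)$ can never be prime by using the characterisation of when $g_m(n) = n$ (Lemma~\ref{lemma:m-th power}) together with Lemma~\ref{lemma:not m-th power}. First I would dispose of the trivial cases: if $n = 0$ then $g_m(0) = 0 \notin \mathbb{P}$, and if $g_m(n) = n$, then by Lemma~\ref{lemma:m-th power} $n = k^d$ with $\gcd(d,m) > 1$, so $n$ is a nontrivial perfect power and hence not prime (a prime has $d = 1$). This handles the case where the sequence collapses to a single value.

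The main case is when $g_m(n) > n$, and I would argue by contradiction: suppose $g_m(n) = q$ is prime. Take a corresponding sequence $n = a_1 \leq \ldots \leq a_t = q$ with $\prod_{i=1}^t a_i = R^m$. The key observation is that every term $a_i$ lies in $[n, q]$, and since $q$ is prime, the only multiples of $q$ in this interval that could appear are $q$ itself (as $2q > q$ and already $q = g_m(n) > n \geq 1$ forces the next multiple $2q$ to exceed the largest term). Hence $q$ divides the product $R^m$ exactly $l$ times where $l$ is the number of appearances of $q$ in the sequence, and $1 \leq l \leq m-1$. But $q \mid R^m$ forces $q \mid R$, so $q^m \mid R^m$, meaning the exponent of $q$ in $\prod a_i$ must be a multiple of $m$. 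This contradicts $1 \leq l \leq m-1$, so $g_m(n)$ cannot be prime.

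I expect the main obstacle to be handling the appearance-count bookkeeping cleanly, specifically confirming that no term other than $q$ contributes a factor of $q$: I need that no $a_i$ with $a_i < q$ is divisible by $q$ (immediate, since such an $a_i \geq n \geq 1$ would have to be a positive multiple of $q$ strictly less than $q$, impossible) and that $q$ itself appears at most $m-1$ times (this is exactly the defining property of an $m$-product sequence). The argument here is essentially the specialisation of Lemma~\ref{lemma:p} to the prime $p = q = g_m(n)$ with $j = 1$, so an alternative phrasing would invoke that lemma directly: if $q$ is prime and appears in an $m$-product sequence as the largest term, then since $\gcd(1, m) = 1$, Lemma~\ref{lemma:p} would require a further distinct multiple of $q$ of the form $r q^s$ with $m \nmid s$ to appear, forcing a term at least $q + q = 2q > g_m(n)$, which is impossible.

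In writing this up I would likely present the contradiction argument directly rather than citing Lemma~\ref{lemma:p}, since the self-contained version is short and avoids matching the somewhat intricate hypotheses of that lemma. The only subtlety worth stating explicitly is why the largest term equalling a prime $q$ prevents any \emph{other} multiple of $q$ from sneaking in below it, which follows purely from the interval constraint $a_i \in [n, q]$ together with primality of $q$.
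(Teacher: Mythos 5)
Your proposal is correct and takes essentially the same approach as the paper: the paper's proof is a one-line appeal to Lemma~\ref{lemma:p} (the largest term being a prime $q$ would force a further distinct multiple of $q$, exceeding $q$), and your self-contained valuation argument is exactly the specialisation of that lemma to $j=1$, as you note yourself. Your additional explicit handling of $n=0$ and of the $g_m(n)=n$ case via Lemma~\ref{lemma:m-th power} is harmless but not needed, since the valuation contradiction already covers those situations.
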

\begin{proof}
    If $g_m(n)=p$ then all terms of any corresponding sequence of $g_m(n)$ are positive and less than or equal to $p$. However this contradicts Lemma \ref{lemma:p}.
\end{proof}

Define the function $\overline{g_m}:\mathbb{N}\rightarrow \mathbb{N}$ such that $\overline{g_m}(n)$ is the greatest integer, $k$, such that there exists a $m$-product sequence starting at $k$ with largest term equalling $n$, $$k=a_1\leq \ldots\leq a_t=n.$$

\begin{lemma}\label{lemma:overline}
    If $n\in\mathbb{N}\backslash\mathbb{P}$, $n\neq 0$, then $\overline{g_m}(n)>0$.
\end{lemma}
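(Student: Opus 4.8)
The plan is to reduce the claim to the existence of a single suitable sequence. By definition $\overline{g_m}(n)$ is the greatest starting value $k$ of an $m$-product sequence whose largest term equals $n$, so it suffices to exhibit \emph{one} $m$-product sequence with largest term $n$ and smallest term at least $1$; the maximum over all valid starting values is then a positive integer (bounded above by $n$). I would dispose of $n=1$ at once: the one-term sequence $(1)$ has product $1=1^m$ and largest term $1=n$ while repeating no integer $m$ times, so $\overline{g_m}(1)\ge 1$.

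For composite $n$ the idea is to pad $n$ with its own prime factors until the product becomes an $m$-th power. Writing $n=\prod_{p\mid n}p^{e_p}$, I would set $j_p:=(-e_p)\bmod m\in\{0,1,\dots,m-1\}$ for each prime $p\mid n$, and take the multiset consisting of $n$ once together with $j_p$ copies of each $p$. Its product is $\prod_{p\mid n}p^{e_p+j_p}$, which is an $m$-th power precisely because $e_p+j_p\equiv 0\pmod m$ for every $p$; after sorting, this is a candidate $m$-product sequence with largest term $n$.

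The remaining work is to check the three defining constraints, and this is where the only real subtlety lies. First, each prime $p$ is used at most $m-1<m$ times and $n$ exactly once, so no integer appears $m$ or more times, and there is no collision since a composite $n$ is distinct from each of its prime factors. Second, I must confirm $n$ really is the largest term: for composite $n$ every prime factor satisfies $p\le n/2<n$, so all padding terms lie in $[2,n)$ and $n$ is the strict maximum. Third, the smallest term is either the least prime factor of $n$ or, when every $j_p=0$, the integer $n$ itself, in both cases a positive integer, which gives a positive starting value and hence $\overline{g_m}(n)>0$. I would also remark that the degenerate case in which $n$ is a perfect $m$-th power is absorbed automatically: then all $j_p=0$ and the sequence collapses to $(n)$, consistent with Lemma \ref{lemma:m-th power}. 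The main obstacle is thus not difficult but bookkeeping-flavoured, namely verifying that the prime factors used as padding never force $n$ out of the maximal position and never exceed multiplicity $m-1$.
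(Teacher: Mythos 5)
Your proof is correct and follows essentially the same route as the paper: pad $n$ with enough copies of each of its prime factors (namely $(-e_p)\bmod m$ copies) to make the product an $m$-th power, note each prime factor is strictly less than the composite $n$ and appears at most $m-1$ times, and handle $n=1$ separately. The only cosmetic difference is that the paper lists only the primes whose exponents are not divisible by $m$, which is the same multiset you obtain since your $j_p=0$ for the others.
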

   \begin{proof}
      Clearly $\overline{g_m}(1)=1$, so consider $n>1$ composite. Then $n=p_1^{e_{p_1}}\ldots p_r^{e_{p_r}}$ and let $q_1<q_2<\ldots<q_l$ be the primes in the factorisation such that $m\nmid e_{q_i}$ for $1\leq i\leq l$. Let $e_{q_i}\equiv s_{q_i} \mod m$ and note that $s_{q_i}$ is not $0$ for any $i$. Then $n\prod_{i=1}^l q_i^{M-s_{q_i}}$ is a $m$-th power. Note that $q_i<n$ for all $1\leq i\leq k$. Therefore the sequence $$q_1^{\{M-s_{q_1}\}},q_2^{\{M-s_{q_2}\}},\ldots,q_i^{\{M-s_{q_i}\}},\ldots,q_l^{\{M-s_{q_l}\}},n$$ is a $m$-product sequence as $M-s_{q_i}<M$ for all $1\leq i\leq l$, so we see that $\overline{g_m}(n)\geq q_1>0$.
   \end{proof} 

   \begin{lemma}\label{lemma:inverse}
       Let $m\in\mathbb{P}$, $n\in\mathbb{N}\backslash\mathbb{P}$, $n\neq 0$. Then $g_m(\overline{g_m}(n))= n$. 
   \end{lemma}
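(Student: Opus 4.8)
The plan is to show that $k:=\overline{g_m}(n)$ is itself a preimage of $n$ under $g_m$, which is exactly what surjectivity needs. First I would record the easy inequality. The set of admissible starting points for sequences ending at $n$ is nonempty by Lemma \ref{lemma:overline} and is contained in $\{1,\dots,n\}$ (any start is at most the largest term $n$), so it is finite and its maximum $k$ is attained by some $m$-product sequence $B$ running from $k$ to $n$. The mere existence of $B$ gives $g_m(k)\le n$. Hence the entire content of the lemma is the reverse inequality $g_m(k)\ge n$, and I would reduce to ruling out $g_m(k)<n$.

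For that I would argue by contradiction, mimicking the cancellation idea of the injectivity proof (Theorem \ref{theorem:injective}) but now applied ``from above''. Suppose $g_m(k)=n'<n$ and let $A$ be a corresponding sequence for $g_m(k)$, that is, an $m$-product sequence from $k$ to $n'$. Let $k$ appear $s$ times in $A$ and $v$ times in $B$, and let $n$ appear $w$ times in $B$, with $1\le s,v,w\le m-1$. I would then build a new sequence $C$ from the multiset union of $B$ together with $j$ copies of $A$, deleting $\lfloor h/m\rfloor m$ copies of each integer occurring $h$ times, exactly as in the proof of Lemma \ref{lemma:product}; the product of the survivors remains an $m$-th power and every multiplicity is $<m$, so $C$ is a genuine $m$-product sequence. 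The crucial choice is $j$: since $m$ is prime and $1\le s\le m-1$, the residue $s$ is invertible modulo $m$, so I can select $j\in\{1,\dots,m-1\}$ with $v+js\equiv 0\pmod m$.

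With this choice, $k$ occurs $v+js\equiv 0\pmod m$ times in the union and is therefore entirely deleted, whereas $n$ — which appears in $B$ but not in $A$, since $\max A=n'<n$ — occurs $w\not\equiv 0\pmod m$ times and so survives. As every term of $A$ and of $B$ lies in $[k,n]$, the sequence $C$ has all terms strictly greater than $k$ (every copy of the minimum $k$ was removed) and largest term exactly $n$. Thus $C$ is an $m$-product sequence ending at $n$ whose starting term exceeds $k=\overline{g_m}(n)$, contradicting the maximality in the definition of $\overline{g_m}(n)$; hence $g_m(k)=n$, proving the claim. I expect the main obstacle to be the bookkeeping that guarantees $n$ is preserved while $k$ is cancelled: this is precisely where primality of $m$ enters (to invert $s$ and force a clean cancellation of $k$), and I would also flag the minor technical point that $A$ and $B$ share the smallest term $k$, so Corollary \ref{lemma:k product} does not apply verbatim and the combination must be carried out directly as above.
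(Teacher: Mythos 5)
Your proof is correct and follows essentially the same route as the paper's: establish $g_m(\overline{g_m}(n))\le n$ from the defining sequence, then derive a contradiction with the maximality of $\overline{g_m}(n)$ by combining the two sequences so that all copies of the common minimum cancel while $n$ survives. The only cosmetic difference is that you pick the multiplier $j$ directly by inverting a residue modulo the prime $m$, whereas the paper iterates the product construction and invokes the pigeonhole principle to find it; your closing remark that Lemma \ref{lemma:product} does not apply verbatim (since the two sequences share their smallest term) matches the paper's decision to carry out the cancellation by hand.
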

    \begin{proof}
    It is clear that $0<g_m(\overline{g_m}(n))\leq n$ as by the definition of $\overline{g_m}(n)$ and Lemma \ref{lemma:overline}, there exists a $m$-product sequence, $A$, starting at $\overline{g_m}(n)$, $$0<\overline{g_m}(n)=a_1\leq \ldots \leq a_t=n.$$
    
    We will show that $g_m(\overline{g_m}(n))=n$. Assume that $g_m(\overline{g_m}(n))<n$. Then there exists a $m$-product sequence, $B$, starting at $\overline{g_m}(n)$, $$0<\overline{g_m}(n)=b_1\leq \ldots \leq b_{t'}<n$$. Similarly to the proof of the injectivity of $g_m$, let $1\leq r\leq m-1$ be the number of times $n$ appears in $A$, let $1\leq s\leq m-1$ be the number of times $\overline{g_m}(n)$ appears in $A$ and let $1\leq v\leq m-1$ be the number of times $\overline{g_m}(n)$ appears in $B$. 

    Now consider the product $\prod_{i=1}^t a_i\prod_{j=1}^{t'}b_j$. This is a non-zero $m$-th power as both $\prod_{i=1}^t a_i$ and $\prod_{j=1}^{t'}b_j$ are non-zero $m$-th powers. If an integer, $x$, appears $h$ times in $\{a_1,\ldots,a_t,b_1,\ldots,b_{t'}\}$, then remove $\lfloor\frac{h}{m}\rfloor m$ terms in the sequence that equal $x$. As this is a multiple of $m$, the product of the remaining terms is still an $m$-th power. We needed $\overline{g_m}(n)>0$ for this part as otherwise, if $0$ appeared $w$ times where $m|w$, removing all $0$s from the sequence would not guarantee that the product of the remaining terms was still an $m$-th power.
    
    Label the remaining terms $c_1,\dots,c_{t_1}$ such that $c_1\leq\ldots\leq c_{t_1}$. Denote the sequence $C$. It is clear that any integer, $x$, appears in $C$ less than $m$ times as otherwise at least $m$ further terms equalling $x$ would have been removed. Note that $\overline{g_m}(n)$ is the smallest integer in $A$ and the smallest integer in $B$, so $c_1\geq\overline{g_m}(n)$ and in fact, $\overline{g_m}(n)$ appears $s+v\mod m$ times in $C$. As $n$ is the largest term in $A$, and does not appear in $B$, we have that $c_{t_1}=n$ and $n$ appears in $C$ exactly $r$ times. Therefore $C$ is a $m$-product sequence starting at $c_1\geq\overline{g_m}(n)$ and the greatest integer in $C$ is $n$.

    We can replace $A$ by $C$ and repeat this process, creating a $m$-product sequence, $d_1,\ldots,d_{t_2}$ where $n=d_{t_2}$ appears $r$ times, $d_1\geq \overline{g_m}(n)$ and $\overline{g_m}(n)$ appears $s+2v\mod m$ times. By repeating this $1\leq k\leq m$ times in total, we will get a $m$-product sequence with these properties except that $\overline{g_m}(n)$ appears $s+kv\mod m$ times. As $m$ is prime and $1\leq v_\leq m-1$, we see that $kv\equiv lv\mod m$ if and only if $k\equiv l\mod m$. So for $k,l\in[1,m]$ and $k\neq l$, it is clear that $s+kv\not\equiv s+lv\mod m$. By the pigeonhole principle, there must then exist a $1\leq j\leq m$ such that $s+jv\mod m = 0$. However then there is a $m$-product sequence with largest term $n$ and smallest term greater than or equal to $\overline{g_m}(n)$ where $\overline{g_m}(n)$ appears $s+jv\mod m = 0$ times. Therefore $\overline{g_m}(n)$ does not appear in the sequence and the smallest term of the $m$-product sequence is strictly greater than $\overline{g_m}(n)$. This contradicts the definition of $\overline{g_m}(n)$ so $g_m(\overline{g_m}(n))=n$.
    \end{proof}

    \begin{theorem}\label{theorem:surjective}
    The function $g_m(n)$ is surjective on $\mathbb{N}\backslash\mathbb{P}$ for all prime $m$.
    \end{theorem}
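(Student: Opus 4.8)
The plan is to show that every element of $\mathbb{N}\setminus\mathbb{P}$ arises as $g_m(k)$ for some $k\in\mathbb{N}$; combined with Lemma \ref{lemma:range}, which already guarantees that the image of $g_m$ lies inside $\mathbb{N}\setminus\mathbb{P}$, this yields that $g_m$ maps $\mathbb{N}$ exactly onto $\mathbb{N}\setminus\mathbb{P}$. Since Lemmas \ref{lemma:overline} and \ref{lemma:inverse} have done the substantive work, the argument reduces to assembling these pieces and dispatching a single boundary case.

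I would fix an arbitrary target $n\in\mathbb{N}\setminus\mathbb{P}$ and seek a preimage. First I would split off the case $n=0$, which is deliberately excluded from Lemma \ref{lemma:inverse} because the deletion argument in its proof needs $\overline{g_m}(n)>0$ in order to remove blocks of $m$ equal terms without destroying the $m$-th-power property. For $n=0$ one simply invokes $g_m(0)=0$ (established in the Appendix), so $0$ is its own preimage and thus lies in the image.

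For $n\neq 0$, Lemma \ref{lemma:overline} tells us that $\overline{g_m}(n)$ is a well-defined positive integer, and Lemma \ref{lemma:inverse}, which crucially uses the hypothesis that $m$ is prime, gives $g_m(\overline{g_m}(n))=n$. Hence $\overline{g_m}(n)$ is an explicit preimage of $n$ under $g_m$. As $n$ ranged over all of $\mathbb{N}\setminus\mathbb{P}$, every such $n$ has a preimage, and therefore $g_m$ is surjective onto $\mathbb{N}\setminus\mathbb{P}$.

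The only place demanding care is the bookkeeping around $n=0$: Lemma \ref{lemma:inverse} is stated only for $n\neq 0$, so the zero case must be handled by hand rather than folded into the general argument. Beyond that there is no genuine obstacle — the essential difficulty, namely showing that $\overline{g_m}(n)$ actually inverts $g_m$ through the repeated $A\cdot B^k$ construction and the pigeonhole step, has already been absorbed into Lemma \ref{lemma:inverse}, and its reliance on $m$ being prime (so that $k\mapsto kv \bmod m$ is a bijection on nonzero residues) is precisely what confines the theorem to prime $m$.
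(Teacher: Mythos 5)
Your proposal is correct and follows essentially the same route as the paper: the author likewise obtains a preimage for each nonzero $n\in\mathbb{N}\setminus\mathbb{P}$ by applying Lemma \ref{lemma:inverse} to $k=\overline{g_m}(n)$ and handles $n=0$ separately via $g_m(0)=0$. Your additional remarks on why $n=0$ must be split off and where primality of $m$ enters are accurate but not needed beyond what the cited lemmas already provide.
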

    \begin{proof}
    For any $n\in\mathbb{N}\backslash\mathbb{P}$, $n>0$, we see that for $k=\overline{g}(n)$, we have $g(k)=n$ by Lemma \ref{lemma:inverse}. Additionally, $g_m(0)=0$. It follows that $g_m(n)$ is surjective on $\mathbb{N}\backslash\mathbb{P}$.
    \end{proof}

We believe that the converse is also true.

\begin{conjecture}\label{conjecture:composite}
     For all composite $m$, $g_m(n)$ is neither injective nor surjective.
\end{conjecture}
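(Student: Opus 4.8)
The plan is to attack the two failure modes separately, since each exploits a different way in which the exponent of a prime can be manipulated when $m$ is composite. For both, I would fix a prime $p \mid m$ and write $m = pm'$ with $m' > 1$; the key structural fact is that $g_{m}(n) \le g_{p}(n)$ by Lemma~\ref{lemma:divisible}, together with the freedom granted by Lemma~\ref{lemma:m-th power}: an integer of the form $k^{d}$ with $\gcd(d,m)>1$ now has $g_m(k^d)=k^d$, whereas for the prime exponent $p$ such a collapse only happens on $p$-th powers. This mismatch is the engine behind non-surjectivity.

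For non-surjectivity, the plan is to produce an element of $\mathbb{N}\setminus\mathbb{P}$ that is not in the image of $g_m$. The natural candidate is a perfect power whose exponent shares a factor with $m$ but is not itself a multiple of $m$: for instance take $n = q^{p}$ for a suitably large prime $q$, where $p \mid m$ but $p < m$. By Lemma~\ref{lemma:m-th power}, $g_m(q^{p}) = q^{p}$ since $\gcd(p,m) = p > 1$, so $q^p$ is a fixed point. I would then argue that the only integer that could map to such an $n$ under $g_m$ is $n$ itself, and since $n$ is already a fixed point, either $g_m$ fails to be injective (if some other $k$ also satisfies $g_m(k)=n$) or the relevant target value just below is skipped. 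The cleaner route is to exhibit a composite value $N$ with $\overline{g_m}(N)$ undefined in the sense needed for surjectivity: concretely, I would look for $N$ such that every $m$-product sequence topping out at $N$ can have its top term removed (because $N$ can appear a number of times that is a nontrivial multiple forced by $\gcd(m,\cdot)>1$), so that no $k$ has $g_m(k)=N$. Identifying the exact such $N$ and proving nothing maps to it is where the real work lies.

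For non-injectivity, the plan is to find two distinct starting points with the same value of $g_m$, and here the divisor structure helps directly. If $l \mid m$ with $1 < l < m$, then by Lemma~\ref{lemma:divisible} any $m$-product sequence can be built by repeating each term of an $l$-product sequence $m/l$ times, and this flexibility means the appearance-count argument that drove the injectivity proof in Theorem~\ref{theorem:injective} breaks down: there, primality of $m$ was used precisely to ensure $kv_{t'} \equiv lv_{t'} \pmod m \iff k \equiv l \pmod m$, so that the combined sequence $A \cdot B^{k}$ could always be tuned to eliminate the shared top term. When $m$ is composite this congruence argument fails whenever $\gcd(v_{t'},m) > 1$, so the contradiction in Theorem~\ref{theorem:injective} cannot be derived. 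I would turn this obstruction into a construction: choose a target value $s$ that is an $m$-th power times a controlled factor, and build two genuinely different corresponding sequences --- one starting at some $n$ and another at $n' \neq n$ --- both ending at $s$ and both minimal, using terms whose exponents mod $m$ cannot be cleared because they are multiples of $m/l$.

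The main obstacle I expect is not the heuristic above but the \emph{minimality} verification: to show $g_m(n) = g_m(n') = s$ I must certify that no smaller $m$-product sequence exists from either $n$ or $n'$, and similarly for non-surjectivity I must rule out \emph{all} possible predecessors of the skipped value. Establishing these lower bounds is exactly the delicate part, since Corollary~\ref{lemma:n+p} and Lemma~\ref{lemma:p} only give lower bounds keyed to primes $p$ with $\gcd(j,m)=1$, and the troublesome cases are precisely those where $\gcd(j,m) > 1$, which those lemmas do not constrain. I would therefore expect to need an explicit small family (perhaps parametrised by a large prime $q$, with the perfect-power fixed points of Lemma~\ref{lemma:m-th power} as the seed) where minimality can be checked by a finite case analysis on which primes below $s$ can appear, rather than a fully general argument --- and pinning down that family so that both lower bounds are simultaneously provable is the crux of the conjecture.
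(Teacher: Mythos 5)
The statement you are addressing is stated in the paper as a \emph{conjecture}: the paper does not prove it, and explicitly records that the smallest composite $m$ for which the injectivity and surjectivity of $g_m$ remain unresolved is $13^2$. Your proposal does not close it either, and the gaps are concrete. For non-surjectivity, your candidate $n=q^{p}$ with $p\mid m$ is a fixed point of $g_m$ by Lemma~\ref{lemma:m-th power}, so it lies \emph{in} the image of $g_m$ and witnesses nothing on its own; the claim that ``the only integer that could map to such an $n$ is $n$ itself'' is unsupported, and you defer exactly the hard part, namely exhibiting a specific skipped value $N$ and ruling out every possible preimage. For non-injectivity, observing that the congruence argument of Theorem~\ref{theorem:injective} breaks down when $\gcd(v_{t'},m)>1$ only shows that one particular proof no longer applies; it does not produce a collision. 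You would still need the minimality certificates you flag at the end, and, as you yourself note, Lemma~\ref{lemma:p} and Corollary~\ref{lemma:n+p} are silent precisely when $\gcd(j,m)>1$.

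It is worth contrasting this with what the paper actually proves. Rather than constructing collisions directly, Theorem~\ref{Theorem:composite} shows that a \emph{single} point $k$ with $g_m(k)<g_p(k)$ for some prime $p\mid m$ already forces non-injectivity, via an infinite descent that leans on the bijectivity of $g_p$ onto $\mathbb{N}\setminus\mathbb{P}$ (Theorems~\ref{theorem:injective} and~\ref{theorem:surjective}) together with the monotonicity $g_m\le g_p$ of Lemma~\ref{lemma:divisible}; no minimality analysis of the colliding sequences is ever needed. When $m$ has two distinct prime factors $p\neq q$, the point $k=r^{q}$ supplies the required discrepancy (it is fixed by $g_m$ but not by $g_p$), settling non-injectivity for every $m$ that is not a prime power; a companion corollary shows that non-surjectivity would itself imply non-injectivity, and Lemma~\ref{lemma:check} handles further cases by computation. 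The genuinely open case is $m=p^{j}$ with $j\ge 2$ and $p\ge 13$, where no discrepancy point is known, and your proposal offers no mechanism for it: every tool you invoke (Lemmas~\ref{lemma:m-th power}, \ref{lemma:divisible} and \ref{lemma:p}) degenerates there in exactly the way it does for the authors. If you want to make progress, the productive target is the paper's reduction: exhibit one $r$ with $g_{p^{2}}(r)<g_{p}(r)$, for instance the conjectured $r\neq 2^{p}$ with $g_{p^{2}}(r)=2^{p}$.
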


The following results almost entirely resolve whether $g_m(n)$ can be injective for composite $m$. 

\begin{theorem}\label{Theorem:composite}
    Let $p$ be a prime such that $p|m$. If there exists $k$ such that $g_m(k)\neq g_p(k)$, then $g_m(n)$ is not injective.
\end{theorem}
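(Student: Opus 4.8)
The plan is to combine the bijectivity of $g_p$ (Theorems~\ref{theorem:injective} and~\ref{theorem:surjective}, which are available precisely because $p$ is prime) with the pointwise domination $g_m(n)\leq g_p(n)$ coming from Lemma~\ref{lemma:divisible}, and then to conclude by a counting argument rather than by constructing an explicit collision. First I would sharpen the hypothesis: since $p\mid m$, Lemma~\ref{lemma:divisible} gives $g_m(k)\leq g_p(k)$, so the assumption $g_m(k)\neq g_p(k)$ forces the strict inequality $g_m(k)<g_p(k)$. Set $N:=g_m(k)$, so that $N<g_p(k)$.

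Next I would introduce the two sets $A:=\{n\in\mathbb{N}: g_m(n)\leq N\}$ and $B:=\{n\in\mathbb{N}: g_p(n)\leq N\}$. Both are finite, because $n\leq g_m(n)$ and $n\leq g_p(n)$ force every member to lie in $\{0,1,\ldots,N\}$. The key structural claim is that $B\subsetneq A$. The inclusion $B\subseteq A$ is immediate from Lemma~\ref{lemma:divisible}: if $g_p(n)\leq N$ then $g_m(n)\leq g_p(n)\leq N$. The inclusion is strict because $k$ itself lies in $A$ (as $g_m(k)=N$) but not in $B$ (as $g_p(k)>N$); hence $|A|>|B|$.

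Finally I would convert this size gap into a collision. Since $g_p$ is a bijection of $\mathbb{N}$ onto $\mathbb{N}\setminus\mathbb{P}$ and every value of $g_p$ already lies in $\mathbb{N}\setminus\mathbb{P}$, we have $B=g_p^{-1}(\{w\in\mathbb{N}\setminus\mathbb{P}: w\leq N\})$, so $|B|=|\{w\in\mathbb{N}\setminus\mathbb{P}: w\leq N\}|$. On the other hand, Lemma~\ref{lemma:range} guarantees that every value $g_m(n)$ lies in $\mathbb{N}\setminus\mathbb{P}$, so restricting $g_m$ to $A$ yields a map $g_m|_A\colon A\to\{w\in\mathbb{N}\setminus\mathbb{P}: w\leq N\}$ into a set of size $|B|<|A|$. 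By the pigeonhole principle $g_m|_A$ cannot be injective, and therefore $g_m$ is not injective.

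I do not anticipate a serious obstacle here; the delicate points are merely checking that $A$ and $B$ are genuinely finite (so that pigeonhole applies) and that the codomain of $g_m|_A$ is exactly the finite set whose cardinality equals $|B|$. It is worth remarking that the argument leans entirely on $p$ being prime, which the hypothesis supplies, and that the case $m=p$ is vacuous, since then $g_m=g_p$ and no witness $k$ can exist.
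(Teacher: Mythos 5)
Your proof is correct, but it reaches the conclusion by a genuinely different mechanism than the paper. Both arguments share the same three ingredients — the pointwise bound $g_m(n)\leq g_p(n)$ from Lemma \ref{lemma:divisible}, the fact that $g_m$ takes values in $\mathbb{N}\setminus\mathbb{P}$ from Lemma \ref{lemma:range}, and the bijectivity of $g_p$ from Theorems \ref{theorem:injective} and \ref{theorem:surjective} — but the paper assumes $g_m$ is injective and runs an infinite descent: starting from the witness $k$, it uses surjectivity of $g_p$ to find $n_1$ with $g_p(n_1)=g_m(k)$, argues via the assumed injectivity of $g_m$ that $g_m(n_1)<g_p(n_1)$, and iterates to build an infinite strictly decreasing sequence $g_p(n_1)>g_p(n_2)>\cdots$ of non-negative integers, which is absurd. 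You instead make a single finite count: with $N=g_m(k)$, the set $A=\{n: g_m(n)\leq N\}$ strictly contains $B=\{n: g_p(n)\leq N\}$ (since $k\in A\setminus B$), while the bijectivity of $g_p$ identifies $|B|$ with the number of non-primes at most $N$, which is exactly the size of the codomain into which $g_m|_A$ maps; pigeonhole then yields a collision outright. Your route buys a shorter, non-inductive argument that exhibits non-injectivity directly rather than by contradiction, and it sidesteps the paper's bookkeeping about the distinctness and termination of the chain $n_1,n_2,\ldots$; the paper's descent is more explicit about where the colliding pair comes from, tracing it along a concrete chain of preimages. All the steps you flag as delicate (finiteness of $A$ and $B$ via $n\leq g_m(n)$, and the identification of the codomain of $g_m|_A$ with a set of cardinality $|B|$) check out, and your closing remark that the hypothesis is vacuous when $m=p$ is also correct.
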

\begin{proof}
    Note that $g_m(n)\in\mathbb{N}\backslash\mathbb{P}$ for all $n\in\mathbb{N}$ by Lemma \ref{lemma:range}. Also, $g_p(n)$ is injective and surjective on $\mathbb{N}\backslash\mathbb{P}$ by Theorems \ref{theorem:surjective} and \ref{theorem:injective}.
    
    Then assume $g_m(n)$ is injective. 

    Let $g_m(k)\neq g_p(k)$. Then by Lemma \ref{lemma:divisible}, we see that $g_m(k)<g_p(k)$. As $g_p(n)$ is surjective, there exists $n_1\in \mathbb{N}$ such that $g_p(n_1)=g_m(k)<g_p(k)$. It follows that $n_1\neq k$ and so using the injectivity of $g_m(n)$, we see that $g_m(n_1)\neq g_m(k)$. With Lemma \ref{lemma:divisible} this implies that $g_m(n_1)<g_p(n_1)=g_m(k)$. 
    
    As $g_p(n)$ is surjective, there exists $n_2\neq n_1$ such that $g_p(n_2)=g_m(n_1)<g_p(n_1)$. By Lemma \ref{lemma:divisible}, we see that either $g_m(n_2)=g_p(n_2)=g_m(n_1)$, in which case $g_m(n)$ is not injective, or $g_m(n_2)<g_p(n_2)=g_m(n_1)$. So the latter case must hold.
    
    As $g_p(n)$ is surjective, there exists $n_3\not \in \{n_1, n_2\}$ such that $g_p(n_3)=g_m(n_2)<g_p(n_2)<g_p(n_1)$. By Lemma \ref{lemma:divisible}, we see that either $g_m(n_3)=g_p(n_3)=g_m(n_2)$, in which case $g_m(n)$ is not injective, or $g_m(n_3)<g_p(n_3)=g_m(n_2)$. So the latter case must be true.

    As $g_p(n)$ is surjective, there exists $n_4\not \in \{n_1, n_2,n_3\}$ such that $g_p(n_4)=g_m(n_3)<g_p(n_3)<g_p(n_2)<g_p(n_1)$. By Lemma \ref{lemma:divisible}, we see that either $g_m(n_4)=g_p(n_4)=g_m(n_3)$, in which case $g_m(n)$ is not injective, or $g_m(n_4)<g_p(n_4)=g_m(n_3)$. So the latter case is true.
    
    We can repeat this and generate an infinite sequence of distinct terms $n_1,n_2,n_3\ldots,$ such that $g_p(n_1)>g_p(n_2)>g_p(n_3)>\ldots$ and $g_m(n_i)<g_p(n_i)$ for all $i$. However as $g_p(n)\in\mathbb{N}$ for all $n\in\mathbb{N}$, we see that $g_p(n)\geq 0$ for all $n$ and $g_p(n_i)-1\geq g_p(n_{i+1})$. Then there must exist $i\leq g_p(n_1)+1$ such that $g_p(n_i)=0$. So $g_p(n_{i+1})\geq g_p(n_i)=0$, which is a contradiction. Therefore $g_m(n)$ is not injective. 
\end{proof}

With Theorem \ref{theorem:injective}, this implies that if $p|m$ then $g_m(n)$ is injective if and only if $g_m(n)=g_p(n)$ for all $n\geq 0$. We show that this cannot be the case if $m$ has more than one prime factor.  

\begin{corollary}
    If $m$ is not a prime power, then $g_m(n)$ is not injective.
\end{corollary}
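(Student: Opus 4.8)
The plan is to combine Theorem~\ref{Theorem:composite} with the observation (already recorded after that theorem) that, for any prime $p\mid m$, the function $g_m(n)$ is injective if and only if $g_m(n)=g_p(n)$ for all $n\geq 0$. Since $m$ is not a prime power, it has at least two distinct prime divisors; fix two of them, say $p$ and $q$ with $p<q$. If I can exhibit a single $n$ for which $g_m(n)\neq g_p(n)$, then Theorem~\ref{Theorem:composite} immediately yields that $g_m(n)$ is not injective, and I am done. So the whole problem reduces to finding one witness $n$ where the $m$-product constraint genuinely beats the $p$-product constraint.

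The natural witness to try is a prime power, where Lemma~\ref{lemma:m-th power} gives very precise control. First I would recall that $g_p(r)=r$ exactly when $r$ is a $d$-th power with $\gcd(d,p)>1$, i.e.\ (since $p$ is prime) when $r$ is a perfect $p$-th power; and similarly $g_m(r)=r$ exactly when $r$ is a $d$-th power with $\gcd(d,m)>1$. The key point is that the set of exponents $d$ with $\gcd(d,m)>1$ is strictly larger than the set with $\gcd(d,p)>1$, because $q\mid m$ but $q\nmid p$. Concretely, I would take $n=t^{q}$ for a suitable base $t$ chosen so that $t^q$ is not a perfect $p$-th power (for instance $t$ a prime distinct from the relevant primes, so that $n=t^q$ has all exponents equal to $q$, and $\gcd(q,p)=1$). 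Then $\gcd(q,m)\geq q>1$, so Lemma~\ref{lemma:m-th power} gives $g_m(n)=n$, whereas $\gcd(q,p)=1$ and $n$ is not a $p$-th power, so $g_p(n)>n$. Hence $g_m(n)<g_p(n)$, providing exactly the witness needed.

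I would then invoke Theorem~\ref{Theorem:composite} with this prime $p$ and this $n$ to conclude that $g_m(n)$ is not injective. It remains only to verify the small technical requirements of Lemma~\ref{lemma:m-th power}: that $n=t^{q}$ really has $\gcd(q,p)=1$ (true since $p\neq q$ are distinct primes) and is not itself a $p$-th power (true because its unique nonzero exponent $q$ is not divisible by $p$), so that $g_p(n)>n=g_m(n)$ strictly. Writing $d=\gcd$ of the exponents of $n$, here $d=q$, and checking $\gcd(d,p)=1$ while $\gcd(d,m)>1$ is the entire content.

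I expect the main (and only mild) obstacle to be bookkeeping around the edge cases of Lemma~\ref{lemma:m-th power}: one must ensure the chosen exponent $q$ really satisfies $\gcd(q,m)>1$ but $\gcd(q,p)=1$, which is automatic once $p,q$ are taken to be two distinct prime divisors of $m$ and $n$ is built as a pure $q$-th power of a prime. No delicate estimation or sequence construction is required, since all the heavy lifting has been done in Theorem~\ref{Theorem:composite} and Lemma~\ref{lemma:m-th power}; the corollary is essentially a matter of producing the right $n$ and quoting those two results.
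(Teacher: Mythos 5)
Your proposal is correct and follows essentially the same route as the paper: the paper also picks two distinct prime divisors $p,q$ of $m$, takes the witness $n=r^q$ for a prime $r$, applies Lemma \ref{lemma:m-th power} to get $g_m(r^q)=r^q<g_p(r^q)$, and then invokes Theorem \ref{Theorem:composite}. No substantive differences.
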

\begin{proof}
    Let $p$ and $q$ be distinct primes such that $p|m$ and $q|m$. Let $p^k||m$ for $k\geq 1$. Then for any prime $r$, it follows from Lemma \ref{lemma:m-th power} that $g_m(r^q)=r^{q}<g_p(r^q)$. Then Theorem \ref{Theorem:composite} implies that $g_m(n)$ is not injective.
\end{proof}

We can also show that if $g_m(n)$ is injective, then it is surjective. We do this by proving the contrapositive.

\begin{corollary}
    If $g_m(n)$ is not surjective on $\mathbb{N}\backslash\mathbb{P}$ then $g_m(n)$ is not injective.
\end{corollary}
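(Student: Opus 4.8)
The plan is to prove the contrapositive directly: assume $g_m(n)$ is \emph{surjective} onto $\mathbb{N}\backslash\mathbb{P}$ and show it must be \emph{injective}. The key structural fact I would lean on is that for any prime $p\mid m$, Lemma \ref{lemma:divisible} gives $g_m(n)\leq g_p(n)$ for all $n$, and by Theorems \ref{theorem:injective} and \ref{theorem:surjective} the function $g_p$ is a bijection between $\mathbb{N}$ and $\mathbb{N}\backslash\mathbb{P}$. So fix such a prime $p$. The whole argument hinges on comparing $g_m$ against the well-understood bijection $g_p$.

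First I would observe, exactly as in the proof of Theorem \ref{Theorem:composite}, that if there existed some $k$ with $g_m(k)\neq g_p(k)$ (hence $g_m(k)<g_p(k)$), then one can run the descent argument of that theorem. That argument repeatedly uses surjectivity of $g_p$ to produce an infinite strictly decreasing sequence $g_p(n_1)>g_p(n_2)>\cdots$ of non-negative integers, which is impossible, \emph{unless} at some stage the alternative $g_m(n_{i+1})=g_p(n_{i+1})$ occurs. Under the injectivity assumption of Theorem \ref{Theorem:composite} that alternative directly yielded non-injectivity; here, working contrapositively, I instead want to conclude that no such $k$ can exist in the first place, i.e. $g_m(n)=g_p(n)$ for all $n$. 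The cleanest route is to note that Theorem \ref{Theorem:composite} already tells us: if some $k$ has $g_m(k)\neq g_p(k)$, then $g_m$ is not injective, which is exactly the conclusion we want. So the real content is to handle the remaining case where $g_m\equiv g_p$ everywhere, and show that this case is incompatible with $g_m$ being non-surjective.

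So the main case split is: either there exists $k$ with $g_m(k)\neq g_p(k)$, in which case Theorem \ref{Theorem:composite} immediately gives that $g_m$ is not injective and we are done; or $g_m(n)=g_p(n)$ for all $n$. In the latter case, since $g_p$ is surjective onto $\mathbb{N}\backslash\mathbb{P}$, $g_m$ would also be surjective onto $\mathbb{N}\backslash\mathbb{P}$, contradicting our hypothesis that $g_m$ is not surjective. Hence the latter case cannot occur, so the former holds and $g_m$ is not injective. This is the entire logical skeleton, and it is short precisely because Theorem \ref{Theorem:composite} has done the heavy lifting.

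The step I expect to require the most care is making sure the dichotomy is genuinely exhaustive and that each branch is logically tight. Concretely, I must verify that ``not surjective'' truly forbids the branch $g_m\equiv g_p$ — this uses that $g_p$ is surjective onto $\mathbb{N}\backslash\mathbb{P}$ (Theorem \ref{theorem:surjective}) together with Lemma \ref{lemma:range}, which guarantees the image of $g_m$ already lies in $\mathbb{N}\backslash\mathbb{P}$ so the two notions of surjectivity match up on the correct codomain. The remaining subtlety is purely bookkeeping: invoking Theorem \ref{Theorem:composite} requires a fixed prime $p\mid m$, which exists since $m\geq 2$, so the argument is uniform in the choice of $p$. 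No further computation is needed; the obstacle is conceptual clarity rather than calculation.
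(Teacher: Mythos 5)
Your proposal is correct and follows essentially the same route as the paper: both arguments reduce the claim to Theorem \ref{Theorem:composite} by using the surjectivity of $g_p$ (for a prime $p\mid m$) together with Lemma \ref{lemma:divisible} and Lemma \ref{lemma:range} to show that non-surjectivity of $g_m$ forces some $k$ with $g_m(k)\neq g_p(k)$. The only cosmetic difference is that the paper exhibits this $k$ explicitly as a $g_p$-preimage of a value missed by $g_m$, whereas you obtain its existence by the dichotomy ``$g_m\equiv g_p$ or not''; the content is identical.
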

\begin{proof}
    First note that $g_m(n)\in\mathbb{N}\backslash\mathbb{P}$ for all $n\in\mathbb{N}$ by Lemma \ref{lemma:range}.

    If $g_m(n)$ is not surjective, there exists some $n'\in\mathbb{N}\backslash\mathbb{P}$ for which there is no $n\in\mathbb{N}$ such that $g_m(n)=n'$. Then there is a prime $p$ such that $p|m$ and $g_p(n)$ is injective and surjective on $\mathbb{N}\backslash\mathbb{P}$ by Theorems \ref{theorem:surjective} and \ref{theorem:injective}. Therefore there exists $n_1$ such that $g_p(n_1)=n'$ and as there is no $n\in\mathbb{N}$ such that $g_m(n)=n'$, it follows from Lemma \ref{lemma:divisible} that $g_m(n_1)<g_p(n_1)=n'$. By Theorem \ref{Theorem:composite}, we see that $g_m(n)$ is not injective.
\end{proof}

It would follow from Theorem \ref{theorem:injective} and Conjecture \ref{conjecture:composite} that if $g_m(n)$ is surjective, then it is injective.

We achieve some partial results towards the full conjecture by calculating $g_m(n)$ for $0\leq n\leq 64$ and $m\geq 2$. We explain the calculations needed to prove these results in the Appendix. 

Let $v(m)$ equal the smallest prime that divides $m$.

\begin{lemma}\label{lemma:check}
    If $m$ is composite and $v(m)=2,3,5,7,11$ or $17$ then $g_m(n)$ is not surjective or injective.
\end{lemma}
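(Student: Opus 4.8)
The statement to prove is Lemma~\ref{lemma:check}: if $m$ is composite and its smallest prime factor $v(m)$ lies in $\{2,3,5,7,11,17\}$, then $g_m(n)$ is not surjective or injective. By Theorem~\ref{Theorem:composite} (and the remark immediately following it), it suffices to exhibit, for each such value $p=v(m)$, a single witness $k$ for which $g_m(k)\neq g_p(k)$; then $g_m(n)$ fails to be injective, and by the subsequent corollary on non-surjectivity implying non-injectivity, the lemma's disjunction holds in either case. So the entire task reduces to finding one explicit $k$ per prime where $g_m$ and $g_p$ disagree.

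The strategy is to leverage Lemma~\ref{lemma:divisible}, which gives $g_m(k)\le g_p(k)$ whenever $p\mid m$, so we only need strict inequality $g_m(k)<g_p(k)$ for some $k$. The cleanest source of strict inequality is Lemma~\ref{lemma:m-th power}: if $k=j^d$ with $\gcd(d,m)>1$ but $\gcd(d,p)=1$, then $g_m(k)=k$ while $g_p(k)>k$ (since $k$ is not a $p$-th power in a way that collapses). More concretely, I would pick a perfect power whose exponent shares a factor with $m$ but not with $p=v(m)$. Since $m$ is composite with smallest prime factor $p$, we have $m/p\geq p$, and $m$ has some prime power structure allowing a $d$ with $p\nmid d$, $\gcd(d,m)>1$. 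For instance, taking a prime $r$ and the number $k=r^{m/p}$ (or a suitable perfect power) forces $g_m(k)=k$ by Lemma~\ref{lemma:m-th power}, whereas $g_p(k)$ strictly exceeds $k$ provided $k$ is not a perfect $p$-th power-compatible value — which I would verify by checking $\gcd(m/p,\,p)$ or the relevant exponent condition. The role of the finite list $\{2,3,5,7,11,17\}$ strongly suggests that the authors instead rely on the stated computation of $g_m(n)$ for $0\le n\le 64$: for each of these six primes, there should be a specific small $k\le 64$ where the tabulated $g_m(k)$ differs from $g_p(k)$, and the restriction to this list is precisely the range where such a small witness has been verified computationally.

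Accordingly, my primary plan is: for each $p\in\{2,3,5,7,11,17\}$, invoke the appendix calculations of $g_m(n)$ on $[0,64]$ to locate a concrete $k$ with $g_m(k)<g_p(k)$, then apply Theorem~\ref{Theorem:composite} to conclude non-injectivity, and finally cite the corollary that non-surjectivity implies non-injectivity to deliver the stated ``not surjective or injective'' conclusion uniformly. I expect the main obstacle to be twofold: first, ensuring the witness $k$ works for \emph{every} composite $m$ with that fixed smallest prime factor $p$ (not just one such $m$), which requires the witness to depend only on $p$ and not on the full value of $m$ — the perfect-power construction via Lemma~\ref{lemma:m-th power} is attractive precisely because $g_m(k)=k$ holds whenever $\gcd(d,m)>1$, a condition one can arrange to hold for all relevant $m$ simultaneously; and second, confirming strict inequality $g_p(k)>k$, i.e.\ that $k$ is genuinely not fixed by $g_p$, which again reduces to the exponent condition of Lemma~\ref{lemma:m-th power} applied with modulus $p$. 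The delicate point is choosing $k$ so that these two conditions are compatible for the given $p$ across all admissible $m$; I suspect the finite prime list reflects exactly the cases where a uniform small witness can be exhibited, with larger $v(m)$ left open in the conjecture.
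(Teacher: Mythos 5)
Your reduction to Theorem~\ref{Theorem:composite} is the right frame, but the concrete witness mechanism you propose collapses exactly in the cases that matter. By the corollary following Theorem~\ref{Theorem:composite}, any $m$ that is not a prime power is already known to be non-injective, so the content of Lemma~\ref{lemma:check} lives entirely in the prime-power case $m=p^j$, $j\ge 2$, with $p=v(m)\in\{2,3,5,7,11,17\}$. For such $m$, your ``cleanest source of strict inequality'' via Lemma~\ref{lemma:m-th power} cannot produce a witness: you need $k=r^d$ with $\gcd(d,m)>1$ but $\gcd(d,p)=1$, and when $m=p^j$ every $d$ with $\gcd(d,p^j)>1$ is divisible by $p$, so $g_p(k)=k=g_m(k)$ and nothing is separated. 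Your fallback to the tabulated values of $g_m(n)$ for $n\le 64$ is in fact what the paper does (it offers no other proof of this lemma), but you do not exhibit any witness, and the appendix entries actually printed do not supply them for $p\ge 5$: for instance $g_{25}(18)=24=g_5(18)$ and $g_{121}(48)=54=g_{11}(48)$, so the listed computations for $n\in\{0,\ldots,7,12,18,48\}$ give no $k$ with $g_{p^j}(k)\ne g_p(k)$ when $p=5,7,11,17$. The selection of the list $\{2,3,5,7,11,17\}$ (note $13$ is omitted, and the paper flags $13^2$ as the first unknown case) is precisely the record of which prime squares admit a computational witness within the searched range; without producing those witnesses the lemma is not proved.

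Separately, your logic for the ``not surjective'' half runs the corollary backwards. The paper proves ``not surjective $\Rightarrow$ not injective''; the converse (``not injective $\Rightarrow$ not surjective'', equivalently ``surjective $\Rightarrow$ injective'') is explicitly only conjectural, as the paper notes it would follow from Theorem~\ref{theorem:injective} together with Conjecture~\ref{conjecture:composite}. So even granting a witness $k$ with $g_m(k)<g_p(k)$, you obtain non-injectivity but nothing about surjectivity. Reading the lemma in context (it is a partial result toward Conjecture~\ref{conjecture:composite}, which asserts both failures, and the paper says for $13^2$ it knows neither), the intended conclusion is that $g_m$ is neither surjective nor injective, so the surjectivity half needs its own argument: one must exhibit a specific $y\in\mathbb{N}\backslash\mathbb{P}$ with $g_m(n)\ne y$ for all $n\le y$ (finite check, again from the $n\le 64$ table; e.g.\ $8$ is omitted from the range of $g_4$), after which non-injectivity comes for free from the corollary in its correct direction.
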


The smallest composite $m$ for which we do not know whether $g_m(n)$ is injective or surjective is $13^2$.

These functions are often not injective on squares and higher powers. It appears that for any prime $p$ and $k\geq 1$, if $s$ is not a $p$-th power then there exists $r\neq s^{p^k}$ such that $g_{p^{k+1}}(r)=s^{p^k}=g_{p^{k+1}}(s^{p^k})$. We have verified this for $s^{p^k}\leq 128$. The case where $s=2$ and $k=1$ is the following conjecture:

\begin{conjecture}
    For any prime $p$, there exists $r\neq 2^p$ such that $g_{p^2}(r)=2^p$.
\end{conjecture}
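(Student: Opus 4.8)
The goal is to exhibit, for each prime $p$, an integer $r\neq 2^p$ with $g_{p^2}(r)=2^p$. Since $2^p$ is a $p$-th power, Lemma~\ref{lemma:m-th power} already gives $g_{p^2}(2^p)=2^p$, so we are really asking for a \emph{second} preimage of $2^p$, which is exactly a witness that $g_{p^2}$ fails to be injective at this value. The plan is to find $r<2^p$ and prove the two bounds separately. For the upper bound $g_{p^2}(r)\le 2^p$ I would display an explicit $p^2$-product sequence of the shape $r^{\{a\}},\dots,(2^p)^{\{b\}}$ terminating at $2^p$; here the extra multiplicity budget is essential, since a $p^2$-product sequence may repeat a term up to $p^2-1$ times whereas a $p$-product sequence may only repeat it $p-1$ times, and it is precisely this slack (together with $g_{p^2}(r)\le g_p(r)$ from Lemma~\ref{lemma:divisible}) that should let $g_{p^2}(r)$ drop below $g_p(r)$ down to $2^p$. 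Choosing the multiplicities reduces to solving a linear congruence modulo $p^2$ in the exponents of the primes occurring in the chosen terms; for instance the natural first guess $r=2^{p-1}$ admits the sequence $(2^{p-1})^{\{p\}},2^p$, whose product is $2^{(p-1)p+p}=2^{p^2}$.

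The real content, and the main obstacle, is the matching lower bound $g_{p^2}(r)\ge 2^p$: one must rule out every $p^2$-product sequence starting at $r$ whose largest term is strictly below $2^p$. This amounts to showing that no multiset of integers in $[r,2^p)$, each used fewer than $p^2$ times, has product a perfect $p^2$-th power with $r$ the least term. The difficulty is a genuine tension in the choice of $r$: taking $r$ small makes the window $[r,2^p)$ exponentially long and hence rich in $p^2$-smooth numbers, so a $p^2$-th power can usually be assembled well below $2^p$. Concretely, $r=2^{p-1}$ does the job for $p\in\{2,3\}$ but already fails at $p=5$, where $16\cdot 18^{21}\cdot 27^{11}=2^{25}3^{75}=54^{25}$ is a $25$-product sequence lying inside $[16,31]$, forcing $g_{25}(16)\le 27<32$. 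Thus $r$ must instead be chosen close to $2^p$, and one would want $r$ to carry a prime factor $q$ with $q^j\,\|\,r$ and $\gcd(j,p)=1$, so that Corollary~\ref{lemma:n+p} and Lemma~\ref{lemma:p} block all candidate endpoints between $r$ and the next multiple of $q$, which should be arranged to be $2^p$ itself. Verifying such a configuration uniformly in $p$ requires control over the prime factorisations of the integers in a short interval just below $2^p$; this number-theoretic input is what is missing in general, and is why the statement remains a conjecture (it is only checked for $2^p\le 128$).

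A complementary, more global route that I would also pursue is a counting argument built on the fact that $g_p$ is a bijection from $\mathbb{N}$ onto $\mathbb{N}\backslash\mathbb{P}$ (Theorems~\ref{theorem:injective} and~\ref{theorem:surjective}). On the finite set $V=\{n:g_p(n)\le 2^p\}$, the map $g_p$ is a bijection onto the non-primes in $[0,2^p]$; since $g_{p^2}(n)\le g_p(n)$ and $g_{p^2}(n)$ is again a non-prime (Lemma~\ref{lemma:range}), $g_{p^2}$ sends $V$ into a set of the same finite cardinality, so comparing the two sums $\sum_{n\in V}g_{p^2}(n)$ and $\sum_{n\in V}g_p(n)$ shows that injectivity on $V$ would force $g_{p^2}\equiv g_p$ throughout $V$. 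Hence exhibiting even a single $n$ in this range with $g_{p^2}(n)<g_p(n)$ breaks the bijection and forces, by pigeonhole, some non-prime value $\le 2^p$ to be attained twice by $g_{p^2}$. This would yield non-injectivity, but it does not by itself show that the repeated value is $2^p$; localising the collision at $2^p$ — equivalently, proving the lower bound $g_{p^2}(r)\ge 2^p$ for the second preimage — is the step I expect to be hardest and the one that would complete the proof.
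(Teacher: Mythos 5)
The statement you were asked about is labelled a \emph{conjecture} in the paper: the authors give no proof, only computational verification for $2^p\le 128$ (as an instance of the more general observation about $g_{p^{k+1}}$). So there is no proof of record to compare your attempt against, and your proposal, by your own account, is not a proof either --- it is an analysis of the obstruction. That analysis is essentially sound. Your individual claims check out: $g_{p^2}(2^p)=2^p$ by Lemma~\ref{lemma:m-th power}; the sequence $(2^{p-1})^{\{p\}},2^p$ has product $2^{p^2}$ with admissible multiplicities, so $g_{p^2}(2^{p-1})\le 2^p$, and this suffices for $p=2,3$ (matching the Appendix values $g_4(2)=4$ and $g_9(4)=8$); and your counterexample $16\cdot 18^{21}\cdot 27^{11}=54^{25}$ is a valid $25$-product sequence showing $g_{25}(16)\le 27$, so the naive candidate $r=2^{p-1}$ already fails at $p=5$. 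Your counting argument on $V=\{n:g_p(n)\le 2^p\}$ is also correct and is in fact a tidier route to the paper's Theorem~\ref{Theorem:composite} in this special case, but as you note it only produces \emph{some} repeated value below $2^p$, not a collision at $2^p$ itself.

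The genuine gap is exactly where you place it: the lower bound $g_{p^2}(r)\ge 2^p$ for a second preimage $r$. Nothing in the paper's toolkit (Lemma~\ref{lemma:p}, Corollary~\ref{lemma:n+p}, the $v(A_j)$ congruence bookkeeping of the Appendix) rules out, uniformly in $p$, all $p^2$-product sequences supported on $[r,2^p)$; doing so requires control of the prime factorisations of integers in a short window below $2^p$, which is unavailable in general. One further caution about your proposed fix of choosing $r$ with $q^j\,\|\,r$, $\gcd(j,p^2)=1$, and next multiple of $q$ equal to $2^p$: since $2^p$ is a power of $2$, any such $q>2$ dividing $r$ cannot divide $2^p$, so Lemma~\ref{lemma:p} would then force a \emph{second} multiple of $q$ strictly between $r$ and $2^p$ into the sequence rather than pushing the endpoint up to $2^p$; the blocking mechanism has to be arranged more carefully than you sketch. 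In short, your proposal correctly diagnoses why this remains open but does not close it, which is consistent with the paper leaving it as a conjecture.
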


This conjecture would imply that $g_{p^k}(n)$ is not injective for $k>1$. This follows as $g_p(2^p)=2^p$ by Lemma \ref{lemma:m-th power}, so Theorem \ref{theorem:injective} implies that $g_p(r)\neq 2^p$ for $r\neq 2^p$. As $g_{p^2}(r)\neq g_p(r)$, we see that $g_{p^2}(r)<g_p(r)$ by Lemma \ref{lemma:divisible} and so $g_{p^k}(r)\leq g_{p^2}(r)<g_p(r)$. Therefore, Theorem \ref{Theorem:composite} implies that $g_{p^k}(n)$ is not injective.

For each composite $m$, we expect there to be an infinite number of distinct pairs $n,n'$ such that $g_m(n)=g_m(n')$ and an infinite number of $x\in\mathbb{N}$ such that $g_m(n)\neq x$ for all $n\in\mathbb{N}$. 

We can generate an infinite number of pairs $n,n'$ such that $g_{4t}(n)=g_{4t}(n')$, for $t\geq 1$, if we assume that there are infinitely many primes which are one more than a square. This is a well known conjecture, proposed as Conjecture $E$ in \cite{hardy1923some}. 

\begin{proposition}
    There an infinite number of distinct pairs, $n,n'\in\mathbb{N}$, such that $g_{4t}(n)=g_{4t}(n')$ if there exist an infinite number of primes of the form $a^2+1$ for $a\in\mathbb{N}$.
\end{proposition}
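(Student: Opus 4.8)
The plan is to exhibit, for each prime of the form $p=a^2+1$, an explicit pair of distinct inputs that $g_{4t}$ sends to the common value $p^2$. The pair I would use is $n=a^2p$ and $n'=p^2$, which are distinct since $a^2p=p^2$ would force $a^2=p=a^2+1$. The easy half is $n'$: because $p^2$ is a perfect square and $\gcd(2,4t)=2>1$, Lemma \ref{lemma:m-th power} gives $g_{4t}(p^2)=p^2$ at once.

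For $n=a^2p$ I would establish matching upper and lower bounds. The key observation behind the upper bound is that
$$(a^2 p)^{2t}\cdot (p^2)^{t}=a^{4t}p^{4t}=(ap)^{4t}$$
is a $4t$-th power; this works precisely because $a^2=p-1$ is itself a square, so raising $a^2$ to the power $2t$ already produces a $4t$-th power. Hence the sequence consisting of $a^2p$ repeated $2t$ times followed by $p^2$ repeated $t$ times is a $4t$-product sequence: each term occurs fewer than $4t$ times (as $t\geq 1$), its smallest term is $a^2p$ and its largest is $p^2$ since $a^2<p$. This yields $g_{4t}(a^2p)\leq p^2$. Equivalently, one may note that $a^2p,\,a^2p,\,p^2$ is a $4$-product sequence and invoke Lemma \ref{lemma:divisible} with $4\mid 4t$.

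For the lower bound I would apply Corollary \ref{lemma:n+p}. Since $p$ is prime and $a^2=p-1<p$, we have $p\nmid a^2$, so $p^1\| a^2p$; as $\gcd(1,4t)=1$, the corollary gives $g_{4t}(a^2p)\geq a^2p+p=(a^2+1)p=p^2$. Combining the two bounds yields $g_{4t}(a^2p)=p^2=g_{4t}(p^2)$, so $(a^2p,\,p^2)$ is a genuine collision. Distinct primes $p=a^2+1$ give distinct values $p^2$ and hence distinct pairs, so the assumed infinitude of such primes produces infinitely many pairs, for every fixed $t\geq 1$.

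The only real content is spotting the pair and the $4t$-th-power identity; everything else is routine verification. I expect the main (conceptual) obstacle to be recognizing why the hypothesis is needed in exactly this form: one requires a prime $p$ with $p-1$ a \emph{perfect square} so that the product $(a^2p)^{2t}(p^2)^t$ becomes a genuine $4t$-th power rather than merely a square, which is what lets the construction survive the passage from $g_2$ to $g_{4t}$. The prime $p$ itself then forces the lower bound $p^2$ through the single-prime argument of Corollary \ref{lemma:n+p}, pinning $g_{4t}(a^2p)$ exactly at $p^2$.
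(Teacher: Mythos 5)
Your proof is correct and follows essentially the same route as the paper: the paper takes the pair $p(p-1)=x^2(x^2+1)$ and $p^2$, gets the lower bound $g_{4t}(p(p-1))\geq p^2$ from Corollary \ref{lemma:n+p}, exhibits the same sequence $(x^2(x^2+1))^{\{2t\}},((x^2+1)^2)^{\{t\}}$ for the upper bound, and uses Lemma \ref{lemma:m-th power} for $g_{4t}(p^2)=p^2$. Your added remarks (the explicit check that $2t,t<4t$, the distinctness of the pairs, and the alternative via Lemma \ref{lemma:divisible}) are all accurate.
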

\begin{proof}
    Choose $0<x\in\mathbb{N}$ such that $x^2+1$ is prime, and let $p=x^2+1$. Then by Lemma \ref{lemma:n+p}, $g_{4t}(x^2(x^2+1))\geq x^2(x^2+1)+x^2+1=(x^2+1)^2$. It can be checked that $(x^2(x^2+1))^{\{2t\}},((x^2+1)^2)^{\{t\}}$ is a $4t$-product sequence starting at $x^2(x^2+1)$ and so $g_{4t}(x^2(x^2+1))=(x^2+1)^2$. It follows from Lemma \ref{lemma:m-th power} that $g_{4t}((x^2+1)^2)=(x^2+1)^2$. Therefore $g_{4t}(p(p-1))=g_{4t}(p^2)$.
\end{proof}

If $g_m(n_1)=g_m(n_2)=\ldots=g_m(n_k)$ for distinct $n_i$, then we can also ask about how large $k$ can be for a given $m$. This is essentially asking how far from injective $g_m(n)$ can be. For instance, $g_m(18)=g_m(20)=g_m(25)=25$ if $4|m$ and $5\nmid m$ and $m>4$. Another example $g_m(20)=g_m(24)=g_m(27)=27$ if $9|m$ and $4\nmid m$ and $m>18$. We can provide some bounds for how large $k$ can be for a given $m$.

\begin{proposition}\label{lemma:k(m)}
    Let $m=p_1^{e_1}\cdot\ldots\cdot p_s^{e_s}$ and $k(m)$ be the largest integer for which there exists $n_1<n_2<\ldots<n_{k(m)}$ such that $g_m(n_1)=g_m(n_2)=\ldots=g_m(n_{k(m)})$. Then $k(m)< \prod_{i=1}^s (e_i+1)=d(m)$ where $d(m)$ counts the number of divisors of $m$.
\end{proposition}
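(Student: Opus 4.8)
The plan is to adapt the combination argument used in the proof of \Cref{theorem:injective}, replacing the ``$m$ prime'' input by a $\gcd$-with-$m$ bookkeeping. Suppose $g_m(n_1)=\cdots=g_m(n_{k})=N$ with $n_1<\cdots<n_{k}$ (write $k=k(m)$ for the extremal configuration). For each $i$ I would fix one corresponding sequence $A_i$ and let $v_i$ be the number of times $N$ appears in $A_i$; since $N$ is the largest term of an $m$-product sequence, $1\le v_i\le m-1$. Set $d_i:=\gcd(v_i,m)$, which is then a \emph{proper} divisor of $m$ (as $d_i\le v_i\le m-1$). The goal is to show that $d_1,\dots,d_{k}$ are pairwise distinct: since $m$ has exactly $d(m)-1$ proper divisors, this gives $k\le d(m)-1<d(m)$.

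First I would extract a divisibility constraint. Fix $i<j$, so $n_i<n_j$; then the smallest term of $A_i$ is strictly smaller than that of $A_j$, while both largest terms equal $N$, so \Cref{lemma:product} and \Cref{lemma:k product} apply with $A=A_i$ and $B=A_j$. For every $c\ge 1$ the sequence $A_i\cdot A_j^{\,c}$ is an $m$-product sequence whose smallest term is $n_i$ (its multiplicity is preserved because $n_i$ does not appear in $A_j$) and whose largest term is at most $N$, and in which $N$ appears exactly $v_i+cv_j \bmod m$ times. If some $c\ge 1$ satisfied $v_i+cv_j\equiv 0\pmod m$, then $N$ would not appear, so this sequence would start at $n_i$ with largest term strictly below $N$, contradicting $g_m(n_i)=N$. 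Hence $v_i+cv_j\not\equiv 0\pmod m$ for all $c\ge 1$. Since the congruence $cv_j\equiv -v_i\pmod m$ admits a positive solution precisely when $\gcd(v_j,m)\mid v_i$, I conclude $d_j\nmid v_i$ for all $i<j$.

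Next I would convert this into a statement about the divisors alone. As $d_j\mid m$, we have $d_j\mid v_i$ if and only if $d_j\mid\gcd(v_i,m)=d_i$; thus the constraint $d_j\nmid v_i$ is exactly $d_j\nmid d_i$ for every $i<j$. In particular $d_i\ne d_j$ whenever $i\ne j$ (equality would force $d_j\mid d_i$). Therefore $d_1,\dots,d_{k}$ are $k$ distinct proper divisors of $m$, which yields $k\le d(m)-1<d(m)$. The degenerate case $N=0$ is consistent, since then $n_i\le g_m(n_i)=0$ forces a single $n_i=0$, i.e.\ $k=1<d(m)$.

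I expect the only genuinely delicate point to be the conceptual one of \emph{identifying the right invariant}: that the correct quantity attached to each $n_i$ is $\gcd(v_i,m)$, and that the combination operation $A_i\cdot A_j^{\,c}$ is exactly what forces these values to be distinct. Everything else is routine: checking that $A_i\cdot A_j^{\,c}$ really starts at $n_i$ and has largest term $\le N$ (both immediate from \Cref{lemma:product}), and the elementary solvability criterion for $cv_j\equiv -v_i\pmod m$. Once the invariant is isolated, the count of proper divisors closes the argument immediately.
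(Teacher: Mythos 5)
Your proposal is correct and follows essentially the same route as the paper: attach to each $n_i$ the invariant $\gcd(v_i,m)$, use $A_i\cdot A_j^{\,c}$ via \cref{lemma:k product} to rule out $v_i+cv_j\equiv 0\pmod m$, and conclude that these gcds are distinct proper divisors of $m$. The only cosmetic difference is that you invoke the standard solvability criterion for $cv_j\equiv -v_i\pmod m$ where the paper constructs the exponent $q$ explicitly via an inverse modulo $m/\gcd(v_j,m)$; the content is identical.
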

\begin{proof}
   Take distinct $n_1<\dots<n_{k(m)}\in\mathbb{N}$ such that $g_m(n_1)=\ldots=g_m(n_{k(m)})$. If $n_1=0$ then $g_m(n_1)=0$ and as $n_i\leq g_m(n_i)=0$, we would have $n_i=0$ for all $1\leq i\leq k(m)$. As the $n_i$ are meant to be distinct, we must have $k(m)=1$. As $m\geq 2$, we have $d(m)\geq 2$ and so $k(m)<d(m)$.
   
   Take distinct $0<n_1<\dots<n_{k(m)}\in\mathbb{N}$ such that $g_m(n_1)=\ldots=g_m(n_{k(m)})$. For $1\leq i\leq k(m)$, there exists a corresponding sequence, $A_i$, for $g_m(n_i)$ with largest term $a_{t_i}$ which appears $1\leq r_i\leq m-1$ times in its sequence. Then we claim that $\gcd(r_i,m)\nmid\gcd(r_{j},m)$ for $1\leq j<i\leq k(m)$. Note that $\gcd(r_i,m)\neq m$ as $1\leq r_i\leq m-1$ for any $1\leq i\leq k(m)$, so $\gcd(r_i,m)< m$.

    Let $r_i=\gcd(r_i,m)t$ where $\gcd(t,\frac{m}{\gcd(r_i,m)})=1$. Let $t^{-1}$ be $s\in\{1,\ldots,\frac{m}{\gcd(r_i,m)}-1\}$ such that $t\cdot s\mod m\equiv 1\mod \frac{m}{\gcd(r_i,m)}$. 
    
    Then if $\gcd(r_i,m)|\gcd(r_j,m)$ for $1\leq j<i\leq k(m)-1$ and letting $q\equiv -\frac{r_j}{\gcd(r_i,m)}t^{-1}\mod \frac{m}{\gcd(r_i,m)}$ such that $q\in \{0,1,\ldots,\frac{m}{\gcd(r_i,m)}-1\}$, we see by Lemma \ref{lemma:k product} that $$A_j\cdot A_i^{q},$$ is a $m$-product sequence with smallest term $n_j$ and largest term less than or equal to $g_m(n_j)$. However $g_m(n_j)$ appears
    $$r_j+q\cdot r_i\mod m = 0$$ times in the sequence. This means that the largest term of an $m$-product sequence, with smallest term $n_j$, is strictly less than $g_m(n_j)$. This contradicts the definition of $g_m(n_j)$.

    As $\gcd(r_i,m)\nmid\gcd(r_{j},m)$ for $1\leq j<i\leq k(m)-1$, it follows that $\gcd(r_i,m)\neq \gcd(r_j,m)$ for $i\neq j$. It is clear that $\gcd(r_i,m)$ is a divisor of $m$ for each $1\leq i\leq k(m)$ so there can be at most $d(m)$ distinct values of $\gcd(r_i,m)$. However we cannot have $\gcd(r_i,m)=m$. So there can be at most $d(m)-1$ distinct values of $\gcd(r_i,m)$ and therefore $k(m)<d(m)$. 
\end{proof}

Note that as $g_8(18)=g_8(20)=g_8(25)=25$ and $g_{27}(20)=g_{27}(24)=g_{27}(27)=27$, the above bound is tight for $m=4$ and $m=8$. Additionally, if $m$ is a prime then it is clear that $k(m)=1$, so the bound is tight for prime $m$ as well.

\begin{conjecture}
   The bound in Proposition \ref{lemma:k(m)} is tight for all prime powers $m$, that is $k(m)=d(m)-1$.
\end{conjecture}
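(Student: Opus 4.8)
The upper bound $k(p^{e})\le d(p^{e})-1=e$ is already supplied by Proposition~\ref{lemma:k(m)}, so the whole content is the matching lower bound: for every prime $p$ and every $e\ge 1$ I must exhibit one value $N$ together with distinct integers $n_{1}<n_{2}<\dots<n_{e}$, all satisfying $g_{p^{e}}(n_{i})=N$. The plan is to read off exactly what the proof of Proposition~\ref{lemma:k(m)} demands and then build a witness meeting it. That proof shows that if $N=g_{p^{e}}(n_{i})$ and $N$ occurs $r_{i}$ times in a corresponding sequence for $n_{i}$, then, writing $p^{w_{i}}:=\gcd(r_{i},p^{e})$, the exponents $w_{i}$ must be strictly increasing along the increasing list of the $n_{i}$; since $1\le r_{i}\le p^{e}-1$ forces $w_{i}\in\{0,1,\dots,e-1\}$, the extremal configuration is the one realising each value $w_{i}=i-1$ exactly once. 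Thus the goal is to realise, at a common $N$, every multiplicity class $t\in\{0,1,\dots,e-1\}$.

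The topmost class is free: take $N=c^{p}$ for a suitable $c$, say a prime, so that $N$ is a perfect $p$-th power with $\gcd(p,p^{e})=p>1$. Lemma~\ref{lemma:m-th power} then gives $g_{p^{e}}(N)=N$ through the sequence $N^{\{p^{e-1}\}}$, in which $N$ appears $p^{e-1}$ times, an exponent-$(e-1)$ multiplicity; this supplies $n_{e}=N$. For each remaining class $t\in\{0,\dots,e-2\}$ I keep the same $N$ and seek a $p^{e}$-product sequence supported on an interval $[n_{t},N]$ in which $N$ appears $p^{t}u$ times with $\gcd(u,p)=1$. Concretely this is a congruence problem over $\mathbb{Z}/p^{e}\mathbb{Z}$: encoding each admissible term of $[n_{t},N]$ by its vector of prime exponents, one needs a combination with each coordinate in $\{0,\dots,p^{e}-1\}$, with $n_{t}$ used at least once and $N$ used in the prescribed residue, whose total exponent vector vanishes modulo $p^{e}$. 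The two verified cases $N=25$ for $p^{e}=2^{3}$ (starts $18,20,25$) and $N=27$ for $p^{e}=3^{3}$ (starts $20,24,27$) show such solutions exist, and they suggest choosing $N$ so that the interval just below it is rich in smooth integers admitting many independent factorisations.

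Having produced the sequences, two points remain. First, each yields only $g_{p^{e}}(n_{t})\le N$; to upgrade this to equality I would pin the lower bound $g_{p^{e}}(n_{t})\ge N$ by choosing $n_{t}$ with an unbalanced factorisation, namely some prime $\ell$ with $\ell^{j}\| n_{t}$ and $\gcd(j,p^{e})=1$, so that Corollary~\ref{lemma:n+p} forces $g_{p^{e}}(n_{t})\ge n_{t}+\ell$, and then excluding every intermediate target in $[\,n_{t}+\ell,\,N\,)$ by the prime-exponent obstruction of Lemma~\ref{lemma:p}. Second, the $n_{t}$ must be genuinely distinct; this follows because sequences realising different exponents $w_{t}$ cannot share a smallest term, for if two did, the combining operation of Lemma~\ref{lemma:product} and Corollary~\ref{lemma:k product} would annihilate $N$ exactly as in the proof of Proposition~\ref{lemma:k(m)} and contradict minimality of $g_{p^{e}}$ at that common starting point.

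The main obstacle is making the construction of the second paragraph uniform in $p$ and $e$: producing, for an arbitrary exponent $e$, a single $N$ whose lower neighbourhood admits $p^{e}$-product sequences realising all $e$ multiplicity classes simultaneously, while keeping the exact-value argument of the third paragraph under control. The small cases are reassuring but reveal no closed-form recipe, and it is this simultaneous realisation of every class at one $N$, rather than any individual class, that I expect to be the crux and the reason the statement is advanced only as a conjecture.
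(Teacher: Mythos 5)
The statement you are addressing is advanced in the paper only as a conjecture: the paper contains no proof of it, so there is no argument of the authors' to compare yours against. Judged on its own terms, your proposal is not a proof either, and to your credit you say so explicitly. The upper bound $k(p^e)\le e=d(p^e)-1$ is indeed already Proposition~\ref{lemma:k(m)}, and your reduction of the remaining content is correct: by that proposition's proof the quantities $\gcd(r_i,p^e)=p^{w_i}$ must be pairwise distinct with $w_i$ strictly increasing along $n_1<\dots<n_k$ and $w_i\le e-1$, so tightness is exactly the assertion that some single $N$ admits, for every $t\in\{0,\dots,e-1\}$, a starting value $n_t$ with $g_{p^e}(n_t)=N$ realised by a sequence in which $N$ appears $p^t u$ times with $p\nmid u$. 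Your observation that the top class $t=e-1$ comes for free from $N=c^p$ via Lemma~\ref{lemma:m-th power} is sound, and your plan for pinning the exact value $g_{p^e}(n_t)=N$ via Corollary~\ref{lemma:n+p} and Lemma~\ref{lemma:p} is the same style of argument the Appendix uses for $g_8(18)=g_8(20)=g_8(25)=25$ and $g_{27}(20)=g_{27}(24)=g_{27}(27)=27$. But the entire load-bearing step --- producing, for arbitrary $p$ and $e\ge 2$, witnesses for all the classes $t\le e-2$ at a common $N$ --- is left as "a congruence problem over $\mathbb{Z}/p^e\mathbb{Z}$" with no construction and no existence argument, only the two verified instances already in the paper. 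That is the gap, and it is the whole difficulty; nothing in the proposal advances the conjecture beyond what the paper already records.

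One secondary point is actually wrong rather than merely missing. Your distinctness argument claims that two corresponding sequences realising different classes $w<w'$ cannot share a smallest term because combining them "would annihilate $N$ exactly as in the proof of Proposition~\ref{lemma:k(m)}." Lemma~\ref{lemma:product} and Corollary~\ref{lemma:k product} are stated for sequences with \emph{distinct} smallest terms precisely so that the multiplicity of the smaller starting value is preserved under combination; if $A$ and $B$ start at the same $n$, the multiplicity of $n$ itself is also reduced modulo $p^e$ and may vanish for the same $k$ that kills $N$, in which case no contradiction with the definition of $g_{p^e}(n)$ results. Moreover in one direction the annihilation is impossible outright: no multiple of $p^{w'}$ is congruent to $-r\bmod p^e$ when $p^w\,\|\,r$ and $w<w'$. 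Distinctness of the $n_t$ is not something you can extract from the combining machinery; it has to be arranged by the (missing) construction itself.
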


\section{\texorpdfstring{$T_m(n)$}{Tm(n)} and \texorpdfstring{$T'_m(n)$}{T'm(n)}}
We begin by introducing the definition of the length of a sequence, as defined in \cite{kagey2024conjecture}.

   Let $A=(a_i)_{i=1}^t$ be a product sequence, then we say $L(A)=t$ is the number of terms in $A$. We call this the \textit{length} of $A$.

    Let $C_m(n)$ be the set of corresponding sequences for $g_m(n)$, then we define $$T_m(n)=\min_{A\in C_m(n)} L(A),$$ to be the minimum length of a corresponding sequence for $g_m(n)$.

\begin{definition}
    Let $A$ be a corresponding sequence for $g_m(n)$, $$a_1,\ldots,a_t.$$ Then we can also represent $A$ as $$b_1^{\{r_1\}},\ldots,b_{s}^{\{r_s\}},$$ where $s$ is the number of distinct integers that appear in $A$. We say $L'(A)=s$ is the distinct length of $A$.

    Let $C_m(n)$ be the set of corresponding sequences for $g_m(n)$, then we define $$T'_m(n)=\min_{A\in C_m(n)} L'(A),$$ to be the minimum distinct length of corresponding sequence for $g_m(n)$. Note that $T_2(n)=T'_2(n)$.
\end{definition}

There are a number of basic properties that we can outline. 
We state the following lemma without proof.
\begin{observation}\label{lemma:clear}
For all $m\geq 2$ and $n$, we have $T'_m(n)\leq T_m(n)$.
\end{observation}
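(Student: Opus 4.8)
The plan is to show the inequality $T'_m(n)\leq T_m(n)$ holds for any single corresponding sequence, from which the inequality on the minima follows immediately. The key observation is that $L'(A)$ counts the number of \emph{distinct} integers appearing in $A$, whereas $L(A)$ counts all terms \emph{with multiplicity}. Since each distinct value $b_j$ contributes a multiplicity $r_j\geq 1$ in the representation $b_1^{\{r_1\}},\ldots,b_s^{\{r_s\}}$, we have $L(A)=\sum_{j=1}^s r_j\geq \sum_{j=1}^s 1 = s = L'(A)$. Thus for every fixed $A\in C_m(n)$, the distinct length never exceeds the length.

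From this pointwise bound, I would conclude by taking minima over the common index set $C_m(n)$. Let $A^\star\in C_m(n)$ be a sequence achieving the minimum length, so $T_m(n)=L(A^\star)$. Then $T'_m(n)=\min_{A\in C_m(n)} L'(A)\leq L'(A^\star)\leq L(A^\star)=T_m(n)$, where the middle inequality is the pointwise bound just established. This completes the argument.

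There is essentially no obstacle here: the statement is a direct consequence of the fact that a minimum over a set is bounded above by the value at any particular element, combined with the trivial inequality between a sum of positive integers and the count of its summands. The only care required is to note that $C_m(n)$ is nonempty, which holds because $g_m(n)$ is well-defined for every $n$ (a corresponding sequence always exists), so both minima are taken over the same nonempty set and are genuine minima rather than infima over the empty set. This is why the lemma can reasonably be stated as an observation without proof.
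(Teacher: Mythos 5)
Your argument is correct and is exactly the intended one: the paper states this as an observation without proof precisely because $L'(A)\leq L(A)$ holds pointwise for every sequence (distinct values counted once versus with multiplicity), and the inequality on minima over the nonempty set $C_m(n)$ follows immediately. Nothing to add.
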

An analogue of Lemma \ref{lemma:divisible} holds for $T'_m(n)$ using a similar proof. 
\begin{lemma}\label{lemma:T bound}
    If $l|m$ and $g_l(n)=g_m(n)$ then $T'_m(n)\leq T'_l(n)$.
\end{lemma}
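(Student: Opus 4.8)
The plan is to mimic the proof of Lemma \ref{lemma:divisible}, but now tracking distinct length rather than just the existence of a short enough corresponding sequence. The key observation is that the hypothesis $g_l(n)=g_m(n)$ guarantees that a corresponding sequence for $g_l(n)$ is taken to a corresponding sequence for $g_m(n)$ (not merely to a witness of the upper bound $g_m(n)\le g_l(n)$), since the largest term is preserved exactly.

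First I would take a corresponding sequence $A$ for $g_l(n)$ that achieves the minimum distinct length, so $L'(A)=T'_l(n)$, and write it as $b_1^{\{r_1\}},\ldots,b_s^{\{r_s\}}$ with $s=T'_l(n)$ distinct values $b_1<\cdots<b_s$, where $b_1=n$ and $b_s=g_l(n)$. Following the construction in Lemma \ref{lemma:divisible}, I would form the new sequence by replacing each $b_i^{\{r_i\}}$ with $b_i^{\{r_i\,\frac{m}{l}\}}$; equivalently, raise the whole product to the power $\frac{m}{l}$. The product becomes $\left(\prod_{i=1}^t a_i\right)^{m/l}=(R^l)^{m/l}=R^m$, an $m$-th power, and each distinct value now appears $r_i\frac{m}{l}\le (l-1)\frac{m}{l}<m$ times, so no integer appears $m$ or more times. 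Thus this is a valid $m$-product sequence starting at $n$ and ending at $g_l(n)=g_m(n)$, hence it is a corresponding sequence for $g_m(n)$.

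The crucial point is that this new sequence uses exactly the same set of distinct integers $\{b_1,\ldots,b_s\}$ as $A$, so its distinct length is again $s=T'_l(n)$. Therefore $T'_m(n)=\min_{A\in C_m(n)} L'(A)\le s=T'_l(n)$, which is the claimed bound. I would note explicitly that the equality $g_l(n)=g_m(n)$ is what certifies membership in $C_m(n)$: without it the constructed sequence would only witness $g_m(n)\le g_l(n)$ and its distinct length would bound $L'$ of a sequence ending at $g_l(n)$ rather than at $g_m(n)$, so the comparison to $T'_m(n)$ would not be licensed.

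I do not expect a serious obstacle here, as the argument is a direct adaptation of Lemma \ref{lemma:divisible} with the added bookkeeping that multiplicities scale by $\frac{m}{l}$ while the underlying set of distinct terms is unchanged. The only subtlety to state carefully is the use of the hypothesis $g_l(n)=g_m(n)$ to ensure the constructed sequence is genuinely a \emph{corresponding} sequence for $g_m(n)$ (i.e.\ ends at $g_m(n)$), so that its distinct length legitimately bounds $T'_m(n)$ from above.
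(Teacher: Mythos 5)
Your proposal is correct and follows essentially the same route as the paper: take a minimum-distinct-length corresponding sequence for $g_l(n)$, replicate each term $\frac{m}{l}$ times to get an $m$-product sequence with the same set of distinct values, and use $g_l(n)=g_m(n)$ to certify it is a corresponding sequence for $g_m(n)$. The only difference is that you make explicit the role of the hypothesis $g_l(n)=g_m(n)$, which the paper uses silently.
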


\begin{proof}
    Let $T'_l(n)=k$, then there exists a corresponding sequence for $g_l(n)$, $$n=a_1^{\{e_1\}},a_2^{\{e_2\}},\ldots,a_k^{\{e_k\}}=g_l(n),$$ where each $e_i\leq l-1$. Then consider the sequence where each term is copied $\frac{m}{l}$ times, $$n=a_1^{\{e_1\frac{m}{l}\}},a_2^{\{e_2\frac{m}{l}\}},\ldots,a_k^{\{e_k\frac{m}{l}\}}=g_m(n).$$ As $\prod_{i=1}^k a_i^{e_i}$ is an $l$-th power, we see that $\prod_{i=1}^k a_i^{e_i\frac{m}{l}}=(\prod_{i=1}^k a_i^{e_i})^\frac{m}{l}$ is an $m$-th power. Therefore this is a corresponding sequence for $g_m(n)$ and $T'_m(n)\leq k$.
\end{proof}

The following lemmas provide a precise characterization about which $n$ satisfy $T'_m(n)=1$ and what values $T_m(n)$ can be.

\begin{lemma}\label{lemma:prime1}
    We have $T'_m(n)=1$ if and only if $g_m(n)=n$.
\end{lemma} 
\begin{proof}
    Assume $g_m(n)=n$. Then as every term, $a_i$, in a corresponding sequence for $g_m(n)$ satisfies $n\leq a_i\leq g_m(n)$, we see that every term in the sequence equals $n$. Therefore $T'(n)=1$.

    Assume $T'_m(n)=1$. Then there is only one integer that appears in the corresponding sequence for $g_m(n)$, and as both $n$ and $g_m(n)$ must appear in the sequence it follows that $g_m(n)=n$.
\end{proof} 

\begin{lemma}\label{lemma:prime2}
    Let $n=\prod_{i=1}^s p_i^{e_i}$ and $d=\gcd(e_1,\ldots,e_s)$. Then $T_m(n)=\frac{m}{\gcd(d,m)}$ and $T'_m(n)=1$ if and only if $\gcd(d,m)>1$. 
\end{lemma}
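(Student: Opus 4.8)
The plan is to reduce both assertions to the already-established characterisation of when $g_m(n)=n$. The central fact I would prove first is that $\gcd(d,m)>1$ is equivalent to $g_m(n)=n$. By Lemma~\ref{lemma:m-th power}, $g_m(n)=n$ holds exactly when $n=k^{d'}$ for some $d'$ with $\gcd(d',m)>1$; since $n=\prod_i p_i^{e_i}$ is a $d'$-th power precisely when $d'\mid d$ (where $d=\gcd(e_1,\ldots,e_s)$), such a $d'$ exists if and only if some prime divides both $d$ and $m$, that is, if and only if $\gcd(d,m)>1$. Feeding this through Lemma~\ref{lemma:prime1}, which states $T'_m(n)=1\iff g_m(n)=n$, I immediately obtain $T'_m(n)=1\iff\gcd(d,m)>1$. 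This settles the $T'_m$-half of the biconditional in both directions; it then remains only to verify the value of $T_m(n)$ in the case $g_m(n)=n$, after which both directions of the full statement follow at once.

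For that value, I would argue under the standing assumption $\gcd(d,m)>1$, equivalently $g_m(n)=n$. Every term $a_i$ of a corresponding sequence satisfies $n\le a_i\le g_m(n)=n$, so every corresponding sequence is of the form $n^{\{j\}}$ with $1\le j\le m-1$, and $T_m(n)$ is the least admissible $j$. The product $n^j=\prod_i p_i^{je_i}$ is an $m$-th power exactly when $m\mid je_i$, i.e.\ $\tfrac{m}{\gcd(m,e_i)}\mid j$, for every $i$; hence the minimal such $j$ is $\operatorname{lcm}_i\tfrac{m}{\gcd(m,e_i)}$, and the entire computation comes down to the identity $\operatorname{lcm}_i\tfrac{m}{\gcd(m,e_i)}=\tfrac{m}{\gcd(m,d)}$.

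I expect this last identity to be the only real (if routine) obstacle, and I would settle it by comparing $q$-adic valuations prime by prime. For a prime $q\mid m$, writing $v_q$ for the $q$-adic valuation, one has $v_q\!\left(\tfrac{m}{\gcd(m,e_i)}\right)=\max\bigl(0,\,v_q(m)-v_q(e_i)\bigr)$, so taking the lcm yields $\max_i\max\bigl(0,v_q(m)-v_q(e_i)\bigr)=\max\bigl(0,\,v_q(m)-\min_i v_q(e_i)\bigr)=\max\bigl(0,\,v_q(m)-v_q(d)\bigr)$, using $v_q(d)=\min_i v_q(e_i)$; this is exactly $v_q\!\left(\tfrac{m}{\gcd(m,d)}\right)$, proving the identity and hence $T_m(n)=\tfrac{m}{\gcd(m,d)}$. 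To finish I would confirm admissibility: since $\gcd(m,d)>1$ we have $\tfrac{m}{\gcd(m,d)}\le\tfrac{m}{2}\le m-1$, so $n^{\{m/\gcd(m,d)\}}$ is a genuine $m$-product sequence and the minimum is attained. The degenerate case $n=1$ (no prime factors) can be handled separately or absorbed by the convention $d=0$, under which $\gcd(d,m)=m$ and the formula correctly returns $T_m(1)=1$.
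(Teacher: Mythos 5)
Your proposal is correct and follows essentially the same route as the paper: both reduce the statement to $g_m(n)=n$ via Lemma~\ref{lemma:m-th power} and Lemma~\ref{lemma:prime1}, observe that every corresponding sequence must then be $n^{\{j\}}$, and identify $T_m(n)$ as the least $j$ with $n^j$ an $m$-th power. The only cosmetic difference is in the final arithmetic step: you compute that minimum as $\operatorname{lcm}_i\frac{m}{\gcd(m,e_i)}$ and verify the identity with $\frac{m}{\gcd(m,d)}$ by $q$-adic valuations, whereas the paper gets the same lower bound from $m\mid re_i$ for all $i$, hence $m\mid rd$, hence $\frac{m}{\gcd(d,m)}\mid r$.
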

\begin{proof}
    Note that $n=k^d$ for $k=\prod_{i=1}^s p_i^{\frac{e_i}{d}}$. Then if $\gcd(d,m)>1$, by Lemma \ref{lemma:m-th power} we see that $g_m(n)=n$. From Lemma \ref{lemma:prime1} we see that $T'_m(n)=1$. Then $T_m(n)$ is the smallest $0<r\in\mathbb{N}$ such that $n^{\{r\}}$ is a corresponding sequence for $g_m(n)$. This means that it is the smallest $r$ such that $n^r$ is a $m$-th power. Clearly $$n^{\frac{m}{\gcd(d,m)}}=k^{\frac{dm}{\gcd(d,m)}}=k^{m\frac{d}{\gcd(d,m)}},$$ is a $m$-th power, so $r\leq \frac{m}{\gcd(d,m)}$. Therefore $m|re_i$ for every $1\leq i\leq s$ and so $$m|\gcd(re_1,re_2,\ldots,re_s)=r\cdot\gcd(e_1,\ldots,e_s).$$ So $m|rd$. It follows that $\frac{m}{\gcd(d,m)}|r\frac{d}{\gcd(d,m)}$ and as $\gcd(\frac{m}{\gcd(d,m)},\frac{d}{\gcd(d,m)})=1$ we see that $\frac{m}{\gcd(d,m)}|r$. As $r>0$, this implies that $r\geq \frac{m}{\gcd(d,m)}$. Therefore $r=\frac{m}{\gcd(d,m)}$. 

    If $T'_m(n)=1$ and $T_m(n)=\frac{m}{\gcd(d,m)}$ then by Lemma \ref{lemma:prime1} we see that $g_m(n)=n$ and so $n^{\{\frac{m}{\gcd(d,m)}\}}$ is a corresponding sequence for $g_m(n)$. Therefore $\frac{m}{\gcd(d,m)}<m$ and so $\gcd(d,m)>1$. 
\end{proof}

We now show that when $T'_m(n)=1$, these are the only possible values of $T_m(n)$.

\begin{lemma}\label{lemma:prime3}
    For a given $m\geq 2$, there exists $n$ such that $T'_m(n)=1$ and $T_m(n)=r$ if and only if $r|m$ and $r<m$. These $n$ are exactly those such that $n=\prod_{i=1}^s p_i^{e_i}$ where $\gcd(e_1,\ldots,e_s,m)=\frac{m}{r}>1$.
\end{lemma}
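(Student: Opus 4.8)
The plan is to prove both directions by leaning heavily on the structural result just established in Lemma \ref{lemma:prime2}, which computes $T_m(n)$ exactly whenever $T'_m(n)=1$. The statement $T'_m(n)=1$ with $T_m(n)=r$ should be rephrased entirely in terms of the quantity $\gcd(d,m)$, where $d=\gcd(e_1,\ldots,e_s)$ is the $\gcd$ of the exponents in the factorisation $n=\prod_{i=1}^s p_i^{e_i}$. Note first that $\gcd(d,m)=\gcd(\gcd(e_1,\ldots,e_s),m)=\gcd(e_1,\ldots,e_s,m)$, so the condition appearing in the final clause of the statement is just a restatement of $\gcd(d,m)$.

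For the forward direction, I would start from the hypothesis that some $n$ has $T'_m(n)=1$ and $T_m(n)=r$. By Lemma \ref{lemma:prime1}, $T'_m(n)=1$ forces $g_m(n)=n$, and then Lemma \ref{lemma:prime2} gives both $\gcd(d,m)>1$ and the explicit formula $r=T_m(n)=\frac{m}{\gcd(d,m)}$. Rearranging, $\gcd(d,m)=\frac{m}{r}$, and since $\gcd(d,m)$ is a divisor of $m$ that is strictly greater than $1$, it follows immediately that $r=\frac{m}{\gcd(d,m)}$ is a divisor of $m$ satisfying $r<m$. This also pins down exactly which $n$ work: those with $\gcd(e_1,\ldots,e_s,m)=\frac{m}{r}>1$, matching the claimed characterisation.

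For the reverse direction, suppose $r\mid m$ and $r<m$. The task is to exhibit an $n$ with $T'_m(n)=1$ and $T_m(n)=r$. Since $r<m$ and $r\mid m$, the integer $\frac{m}{r}$ satisfies $\frac{m}{r}>1$ and $\frac{m}{r}\mid m$. The natural construction is to take $n=k^{m/r}$ for any $k$ that is not itself a proper power making the $\gcd$ of exponents larger—for instance $n=2^{m/r}$, whose exponent $\gcd$ is exactly $\frac{m}{r}$ so that $\gcd(d,m)=\gcd(\frac{m}{r},m)=\frac{m}{r}$. Applying Lemma \ref{lemma:prime2} to this $n$ yields $T'_m(n)=1$ (since $\gcd(d,m)=\frac{m}{r}>1$) and $T_m(n)=\frac{m}{\gcd(d,m)}=\frac{m}{m/r}=r$, as required. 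The explicit characterisation of all such $n$ then follows by running Lemma \ref{lemma:prime2} in both directions.

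The main obstacle is not any single hard step but rather bookkeeping: ensuring the chosen witness $n$ genuinely has exponent-$\gcd$ equal to $\frac{m}{r}$ and not a larger multiple. Picking $n=2^{m/r}$ (a single prime raised to the power $\frac{m}{r}$) sidesteps this cleanly, since the $\gcd$ of a one-element exponent set is that exponent itself. I should also double-check the edge interaction $\gcd(\frac{m}{r},m)=\frac{m}{r}$, which holds precisely because $\frac{m}{r}\mid m$; this is where the divisibility hypothesis $r\mid m$ is used, and the strict inequality $r<m$ is exactly what guarantees $\frac{m}{r}>1$ so that Lemma \ref{lemma:prime2} applies and delivers $T'_m(n)=1$. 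Everything else is a direct substitution into the formula of Lemma \ref{lemma:prime2}.
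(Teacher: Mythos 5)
Your proposal is correct, and its overall skeleton (reduce everything to Lemma \ref{lemma:prime1} and the formula of Lemma \ref{lemma:prime2}, then exhibit a one-prime witness for the converse) matches the paper's. The one place you genuinely diverge is the forward direction: the paper does \emph{not} read $r=m/\gcd(d,m)$ off Lemma \ref{lemma:prime2}; instead it re-derives $r\mid m$ from scratch by writing $n=k^d$, observing $m\mid dr$, and arguing that if $r\nmid m$ one could divide $r$ by a prime and contradict minimality of $T_m(n)$. Your route --- deducing $\gcd(d,m)>1$, applying the already-proved formula $T_m(n)=m/\gcd(d,m)$, and concluding $r=m/\gcd(d,m)$ is a proper divisor of $m$ --- is shorter and arguably cleaner, since it also hands you the characterisation $\gcd(e_1,\ldots,e_s,m)=m/r$ for free rather than as a separate observation. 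One small citation point to tighten: as stated, Lemma \ref{lemma:prime2} is an equivalence between the conjunction ``$T'_m(n)=1$ and $T_m(n)=m/\gcd(d,m)$'' and ``$\gcd(d,m)>1$'', so it does not by itself deliver the implication $g_m(n)=n\Rightarrow\gcd(d,m)>1$ that you need before you may invoke its ``if'' direction. That implication comes from Lemma \ref{lemma:m-th power}: $g_m(n)=n$ gives $n=k^{d'}$ with $\gcd(d',m)>1$, and since $d'$ divides every exponent $e_i$ it divides $d$, whence $\gcd(d,m)\geq\gcd(d',m)>1$. With that one line inserted, your argument is complete. Your choice of witness $n=2^{m/r}$ in the converse is a special case of the paper's family $e_1=\cdots=e_s=t\,m/r$ with $\gcd(t,m)=1$, and works for the same reason.
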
 
\begin{proof}
    If $T'_m(n)=1$ and $T_m(n)=r$ then by Lemma \ref{lemma:prime1} we see that $g_m(n)=n$ and $r$ is the smallest integer such that $n^r$ is a $m$-th power. Note that by definition, $r=T_m(n)<m$. By Lemma \ref{lemma:m-th power} this means that $n=k^d$ for some $k,d\in\mathbb{N}$ such that $\gcd(m,d)>1$. It follows that $k^{dr}$ is a $m$-th power, so $m|dr$. 
    
    Assume $r\nmid m$. Then there exists a prime $p$ such that $m|d\frac{r}{p}$. However then $k^{d\frac{r}{p}}=n^{\frac{r}{p}}$ is an $m$-th power and so $T_m(n)\leq \frac{r}{p}<r$, which is a contradiction. Therefore $r|m$.
    
    Conversely if $r|m$ and $r<m$, then there are infinitely many $n$ such that $n=\prod_{i=1}^s p_i^{e_i}$ and $\gcd(e_1,\ldots,e_s,m)=\frac{m}{r}>1$. (Eg. $e_1=\ldots=e_s=t\frac{m}{r}$ for any $t$ such that $\gcd(t,m)=1$). By Lemma \ref{lemma:prime2}, it follows that these are exactly the $n$ such that $T'_m(n)=1$ and $T_m(n)=\frac{m}{\gcd(e_1,\ldots,e_s,m)}=r$. 
\end{proof}

On the OEIS page for $g_2(n)$ \cite{citation-key}, Robert G. Wilson v. observed that there did not seem to be any $n$ such that $T_2(n)=2$. It was proved in \cite{kagey2024conjecture} that there does not exist any such $n$ and the authors conjectured that for all $k\neq 2$, there exists $n$ such that $T_2(n)=k$. They also asked if there were any notable properties of $T_m(n)$ or $T'_m(n)$ for other $m$.

We show that for other $m$ we can have $T'_m(n)=2$ and $T_m(n)=2$. For instance, it is clear that for even $m\geq 4$ and $x\in\mathbb{N}$, we have $T_{m}(x^{\frac{m}{2}})=2$. However it is more interesting to look at cases where $n$ is not a power and $m$ is prime.

\begin{lemma}\label{lemma:18} We have that:
\begin{enumerate}
        \item  $T'_{5}(18)=2$

        \item  $T'_{11}(48)=2$

        \item  $T_{3}(2)=T'_{3}(2)=2$

        \item $T_{5}(4)=T'_{5}(4)=2$

        \item $T_{3}(12)=T'_{3}(12)=2$
    \end{enumerate}
\end{lemma}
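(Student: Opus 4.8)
The plan is to verify each of the five claims by the same two-step recipe: determine the exact value of $g_m(n)$, and then exhibit one explicit corresponding sequence whose (distinct) length matches the claim, with the lower bound on the length coming essentially for free.

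First I would dispose of the lower bound $T'_m(n)\geq 2$ uniformly. By Lemma \ref{lemma:prime1}, $T'_m(n)=1$ holds exactly when $g_m(n)=n$, and by Lemma \ref{lemma:m-th power} this occurs exactly when $n=k^d$ with $\gcd(d,m)>1$. None of the listed pairs is of this form: $2$, $12$, $18=2\cdot 3^2$ and $48=2^4\cdot 3$ are not perfect powers at all, while $4=2^2$ satisfies $\gcd(2,5)=1$. Hence $g_m(n)\neq n$ in every case, so $T'_m(n)\geq 2$, and therefore $T_m(n)\geq T'_m(n)\geq 2$ by Observation \ref{lemma:clear}. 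It then remains only to prove the matching upper bounds.

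For the upper bounds I would exhibit the following witnesses, each of which is a valid $m$-product sequence since no term repeats more than $m-1$ times. For claim (3), $2\cdot 4=2^3$ gives the two-term sequence $2,4$; for claim (4), $4\cdot 8=2^5$ gives $4,8$; and for claim (5), $12\cdot 18=6^3$ gives $12,18$. Each of these has only two terms, so once we know $g_m(n)$ equals its top term we get $T_m(n)\leq 2$, and hence $T_m(n)=T'_m(n)=2$. For claim (1), $18^2\cdot 24=2^5 3^5=6^5$ gives the sequence $18^{\{2\}},24$, and for claim (2), $48^8\cdot 54=2^{33}3^{11}=24^{11}$ gives $48^{\{8\}},54$; each of these uses exactly two distinct integers (but more than two terms), so they yield $T'_m(n)\leq 2$ only, matching the fact that only the $T'$ statement is claimed in these two cases. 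In each case the witness proves that $g_m(n)$ is at most its largest term, namely $g_3(2)\leq 4$, $g_5(4)\leq 8$, $g_3(12)\leq 18$, $g_5(18)\leq 24$ and $g_{11}(48)\leq 54$, and a witness is a \emph{corresponding} sequence precisely when this inequality is an equality.

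The heart of the argument is therefore showing each of these inequalities is tight. Lower bounds come from Corollary \ref{lemma:n+p}: for instance $2\,\|\,2$ gives $g_3(2)\geq 4$ (matching immediately), $2^2\,\|\,4$ gives $g_5(4)\geq 6$, $3\,\|\,12$ gives $g_3(12)\geq 15$, $3^2\,\|\,18$ gives $g_5(18)\geq 21$, and $3\,\|\,48$ gives $g_{11}(48)\geq 51$. This leaves, in each case, only a short window of candidate values between the corollary's bound and the witness's top term to eliminate (e.g. ruling out $g_5(18)\in\{21,22,23\}$ and $g_{11}(48)\in\{51,52,53\}$). The main obstacle is thus exactly this finite exclusion: checking that no $m$-product sequence starting at $n$ terminates inside that window. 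This is a bounded exhaustive computation, carried out in the Appendix; once it is in place, each $g_m(n)$ is pinned down and the claimed values of $T_m$ and $T'_m$ follow at once.
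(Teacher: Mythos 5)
Your proposal is correct and follows essentially the same route as the paper: the exact values of $g_m(n)$ are pinned down by the Appendix computations (Lemma \ref{lemma:p}/Corollary \ref{lemma:n+p} plus finite case elimination), an explicit two-distinct-integer witness gives the upper bound, and Lemma \ref{lemma:prime1} rules out $T'_m(n)=1$ since $g_m(n)\neq n$. Your witnesses differ only cosmetically (e.g.\ $48^{\{8\}},54$ versus the paper's $48^{\{2\}},54^{\{3\}}$, and the cleaner $12\cdot 18=6^3$ for claim (5)), all of which check out.
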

\begin{proof}
   In the Appendix, we show that $g_5(18)=24$ and $18^{\{2\}},24$ is a corresponding sequence for $g_5(18)$. By Lemma \ref{lemma:prime1}, $T'_5(18)\neq 1$ as $g_5(18)\neq 18$. So $T'_5(18)=2$.

    In the Appendix, we show that $g_{11}(48)=54$ and $48^{\{2\}},54^{\{3\}}$ is a corresponding sequence for $g_{11}(48)$. By Lemma \ref{lemma:prime1}, $T'_{11}(48)\neq 1$ as $g_{11}(48)\neq 48$. So $T'_{11}(48)=2$.

    In the Appendix, we show that $g_3(2)=4$ and $T'_3(2)\leq T_3(2)\leq 2$. As $g_3(2)\neq 2$, it follows from Lemma \ref{lemma:prime1} that $T_3(2)\geq T'_3(2)>1$. Therefore $T_3(2)=T'_3(2)=2$.

   In the Appendix, we show that $g_5(4)=8$ and $T'_5(4)\leq T_5(4)\leq 2$. As $g_5(4)\neq 4$, it follows from Lemma \ref{lemma:prime1} that $T_5(4)\geq T'_5(4)>1$. Therefore $T_5(4)=T'_5(4)=2$.

    In the Appendix, we show that $g_3(12)=18$ and $T'_3(12)\leq T_3(12)\leq 2$. As $g_3(12)\neq 12$, it follows from Lemma \ref{lemma:prime1} that $T_3(12)\geq T'_3(12)>1$. Therefore $T_3(12)=T'_3(12)=2$. .
\end{proof}

\begin{lemma}\label{lemma:k}
    Let $k\geq 1$. If $g_m(n)=g_{km}(n)\neq n$ and $T'_m(n)=2$ then $T'_{km}(n)=2$.
\end{lemma}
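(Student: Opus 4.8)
The plan is to sandwich $T'_{km}(n)$ between $1$ and $2$ using the two structural lemmas already established, namely Lemma \ref{lemma:T bound} and Lemma \ref{lemma:prime1}. Since $T'_{km}(n)$ is a positive integer, proving $1 < T'_{km}(n) \leq 2$ forces $T'_{km}(n)=2$, which is exactly the desired conclusion.

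For the upper bound, I would apply Lemma \ref{lemma:T bound} with $l=m$ and with the modulus of that lemma taken to be $km$. The divisibility hypothesis $m \mid km$ holds trivially for $k\geq 1$, and the equality hypothesis $g_m(n)=g_{km}(n)$ is given. Therefore Lemma \ref{lemma:T bound} yields $T'_{km}(n) \leq T'_m(n)$, and since $T'_m(n)=2$ by assumption, we obtain $T'_{km}(n)\leq 2$.

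For the lower bound, I would invoke Lemma \ref{lemma:prime1}, which states that $T'_{km}(n)=1$ if and only if $g_{km}(n)=n$. The hypothesis gives $g_{km}(n)=g_m(n)\neq n$, so in particular $g_{km}(n)\neq n$. Hence Lemma \ref{lemma:prime1} rules out $T'_{km}(n)=1$, giving $T'_{km}(n)\geq 2$. Combining the two bounds completes the argument.

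There is no substantial obstacle here: the statement is essentially a bookkeeping consequence of the monotonicity of $T'$ under the divisibility relation (Lemma \ref{lemma:T bound}) together with the characterization of the case $T'=1$ (Lemma \ref{lemma:prime1}). The only point requiring a moment's care is verifying that the hypotheses of Lemma \ref{lemma:T bound} are met with the roles $l=m$ and modulus $km$, so that the inequality points in the correct direction; this is immediate from $k\geq 1$ and the assumed equality $g_m(n)=g_{km}(n)$.
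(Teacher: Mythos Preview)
Your proposal is correct and follows exactly the same approach as the paper's proof: apply Lemma~\ref{lemma:T bound} for the upper bound $T'_{km}(n)\leq T'_m(n)=2$, then use Lemma~\ref{lemma:prime1} with $g_{km}(n)\neq n$ to rule out $T'_{km}(n)=1$.
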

\begin{proof}
    By Lemma \ref{lemma:T bound}, we have $T'_{km}(n)\leq T'_m(n)=2$. As $g_{km}(n)\neq n$, we see by Lemma \ref{lemma:prime1} that $T'_{km}(n)>1$. Therefore $T'_{km}(n)=2$.
\end{proof}

\begin{corollary}
    We see that for any $k\geq 1$:

    \begin{enumerate}
        \item  $T'_{5k}(18)=2$

        \item  $T'_{11k}(48)=2$

        \item  $T'_{3k}(2)=2$

        \item $T'_{5(2k-1)}(4)=2$

        \item $T'_{3k}(12)=2$.
    \end{enumerate}
\end{corollary}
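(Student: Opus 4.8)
The plan is to prove each of the five items by invoking Lemma \ref{lemma:k}, which reduces everything to checking its two hypotheses for the base cases already established in Lemma \ref{lemma:18}. For each base case $(m_0,n)$ among $(5,18),(11,48),(3,2),(5,4),(3,12)$, we already know from Lemma \ref{lemma:18} that $T'_{m_0}(n)=2$ and, in particular, that $g_{m_0}(n)\neq n$. So the only work left is, for each $k$, to verify that $g_{km_0}(n)=g_{m_0}(n)$; once that holds, Lemma \ref{lemma:k} immediately gives $T'_{km_0}(n)=2$.

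\medskip

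First I would recall the two-sided control we have on $g_{km_0}(n)$. By Lemma \ref{lemma:divisible}, since $m_0 \mid km_0$, we always have $g_{km_0}(n)\leq g_{m_0}(n)$. Thus it suffices to prove the reverse inequality $g_{km_0}(n)\geq g_{m_0}(n)$ in each case, which amounts to showing that no $km_0$-product sequence starting at $n$ can have largest term strictly below $g_{m_0}(n)$. The natural tool here is Lemma \ref{lemma:p} (and its Corollary \ref{lemma:n+p}): for each base value $n$, one identifies a prime $p$ with $p^j\|n$ and $\gcd(j,km_0)=1$, forcing the largest term of any corresponding sequence to be at least $n+p$, and more generally constraining which integers can appear. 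For instance $18=2\cdot 3^2$ with the prime $2$ appearing to the first power, and $\gcd(1,5k)=1$, so a term divisible by $2$ but not an appropriate power must appear; iterating this kind of exponent-counting argument pins down the largest term as exactly $g_{m_0}(n)$ regardless of $k$.

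\medskip

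Concretely, I would lift each explicit witnessing $m_0$-product sequence from Lemma \ref{lemma:18} to a $km_0$-product sequence by copying each term $k$ times, exactly as in the proof of Lemma \ref{lemma:divisible}: if $b_1^{\{r_1\}},\ldots,b_s^{\{r_s\}}$ is a corresponding sequence for $g_{m_0}(n)$ then $b_1^{\{kr_1\}},\ldots,b_s^{\{kr_s\}}$ is a $km_0$-product sequence (each multiplicity $kr_i\leq k(m_0-1)<km_0$, and the product becomes a $(km_0)$-th power). This shows $g_{km_0}(n)\leq g_{m_0}(n)$ explicitly and also exhibits a corresponding sequence of distinct length $2$, so we even recover $T'_{km_0}(n)\leq 2$ directly. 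Combined with the lower bound $g_{km_0}(n)\geq g_{m_0}(n)$ from the Lemma \ref{lemma:p} arguments, we conclude $g_{km_0}(n)=g_{m_0}(n)\neq n$, and then Lemma \ref{lemma:k} yields $T'_{km_0}(n)=2$.

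\medskip

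The main obstacle is establishing the lower bound $g_{km_0}(n)\geq g_{m_0}(n)$ uniformly in $k$, since the original computations of $g_{m_0}(n)$ in the Appendix may have used properties specific to the modulus $m_0$. The safe route is to argue that the constraints coming from Lemma \ref{lemma:p} are essentially insensitive to replacing $m_0$ by $km_0$ in these small cases: the relevant coprimality conditions $\gcd(j,m_0)=1$ upgrade to $\gcd(j,km_0)=1$ only when $\gcd(j,k)=1$, so for the handful of values in item (4) the hypothesis $5(2k-1)$ (restricting to odd multipliers) is exactly what guarantees $4=2^2$ still forces the same obstruction. I would therefore treat item (4) with the parity restriction explicitly, and verify for the others that the prime and exponent used in the original lower-bound argument remain coprime to $km_0$ for every admissible $k$; this case-check, while routine, is where care is required, and it is precisely the content that justifies the stated ranges of $k$ in each item.
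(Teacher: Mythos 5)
Your proposal is correct and follows essentially the same route as the paper: the paper's proof simply cites the Appendix for the fact that $g_{km_0}(n)=g_{m_0}(n)\neq n$ in each case and then applies Lemmas \ref{lemma:18} and \ref{lemma:k}, and your upper bound (lifting the witnessing sequence by copying each term $\frac{km_0}{m_0}$ times, as in Lemma \ref{lemma:divisible}) together with your lower bound (the Lemma \ref{lemma:p} obstructions being uniform in the modulus) is exactly how the Appendix establishes that fact. You also correctly isolate the one delicate point, namely that item (4) needs odd multiples of $5$ because $4=2^2$ only triggers Lemma \ref{lemma:p} when the modulus is odd, which matches the paper's restriction to $5(2k-1)$.
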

\begin{proof}
    In the Appendix, we show that $g_5(18)=g_{5k}(18)\neq 18$. Then by Lemmas \ref{lemma:18} and \ref{lemma:k}, we see that $T'_{5k}(18)=2$.

    In the Appendix, we show that $g_{11}(48)=g_{11k}(48)\neq 48$. Then by Lemmas \ref{lemma:18} and \ref{lemma:k}, we see that $T'_{11k}(48)=2$.

    In the Appendix, we show that $g_3(2)=g_{3k}(2)\neq 2$. Then by Lemmas \ref{lemma:18} and \ref{lemma:k}, we see that $T'_{3k}(2)=2$.

    In the Appendix, we show that $g_5(4)=g_{5(2k-1)}(4)\neq 4$. Then by Lemmas \ref{lemma:18} and \ref{lemma:k}, we see that $T'_{5(2k-1)}(4)=2$.

    In the Appendix, we show that $g_3(12)=g_{3k}(12)\neq 12$. Then by Lemmas \ref{lemma:18} and \ref{lemma:k}, we see that $T'_{3k}(12)=2$.
\end{proof}

\begin{conjecture}
    We conjecture that for each $m\geq 2$, there are only finitely many $n$ such that $T_m(n)=T'_m(n)=2$.
\end{conjecture}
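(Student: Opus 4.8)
The plan is first to recast $T_m(n)=T'_m(n)=2$ numerically. Introduce $h_m(n)$, the least integer $a>n$ for which $na$ is a perfect $m$-th power; since $\{n,h_m(n)\}$ is then an $m$-product sequence, $g_m(n)\leq h_m(n)$ always. I claim that $T_m(n)=T'_m(n)=2$ holds if and only if $g_m(n)=h_m(n)>n$. If $T'_m(n)=2$ then $g_m(n)\neq n$ by Lemma \ref{lemma:prime1}, so $g_m(n)>n$ and any corresponding sequence of total length $2$ consists of two distinct terms $n<g_m(n)$ with $n\,g_m(n)$ a perfect $m$-th power; hence $g_m(n)\geq h_m(n)$, forcing $g_m(n)=h_m(n)>n$. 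Conversely, if $g_m(n)=h_m(n)>n$ then $\{n,h_m(n)\}$ is a corresponding sequence of distinct length and total length $2$ and no shorter one exists, so $T_m(n)=T'_m(n)=2$. The task is thus to show $g_m(n)=h_m(n)>n$ for only finitely many $n$.

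Next I would discard the easy range. By Theorem \ref{theorem:upper}, $g_m(n)\leq 2n$ for all $n\geq 5$, so whenever $h_m(n)>2n$ we automatically have $g_m(n)<h_m(n)$ and the condition fails; it therefore suffices to treat $n$ with $n<h_m(n)\leq 2n$. Writing $n=uv^m$ with $u$ the ($m$-th-power-free) part carrying the exponents not divisible by $m$ and $u'$ its complementary $m$-th-power-free part, so that $uu'$ is an $m$-th power, one computes $h_m(n)=u'w_0^{m}$ where $w_0$ is the least integer exceeding $\alpha:=v(u/u')^{1/m}$, and hence $h_m(n)/n=(w_0/\alpha)^m$. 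In particular the relative gap tends to $1$, and $h_m(n)-n$ grows only like a constant times $v^{m-1}$, i.e. like $n^{1-1/m}$, so the entire difficulty is concentrated in an interval $(n,h_m(n))$ that is short compared with $n$.

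The engine of the argument is to produce, for all large $n$ in this range, an $m$-product sequence starting at $n$ whose largest term is strictly below $h_m(n)$; any such sequence gives $g_m(n)<h_m(n)$ and kills the condition. The template is $m=2$, which I would present in full: if $n=uv^2$ is not a square then $u\geq 2$, and Lemma \ref{lemma:ab} applied to the factorisation $n=v\cdot uv$ yields $g_2(n)\leq(v+1)(uv+1)=n+uv+v+1$, while $h_2(n)=u(v+1)^2=n+2uv+u$; the difference is $(u-1)(v+1)>0$, so $g_2(n)<h_2(n)$ and no $n$ satisfies $T_2(n)=2$. For general $m$ I would seek the analogous family of integers lying in $(n,h_m(n))$ whose product, together with suitable repetitions of $n$ and of $u'$-type completions, is a perfect $m$-th power; the natural candidates are $m$-th-power multiples of $u$, of $u'$, and Selfridge-type perturbations such as $u v^{m-1}(v+1)$ and their relatives.

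The main obstacle is precisely this existence question in a short interval. When $v$ is large the interval $(n,h_m(n))$ has length only $O(v^{m-1})=o(n)$, yet the two obvious residue classes give no help: the previous $u'$-type completion $u'(w_0-1)^m$ lies at or below $n$, and, when $u'<u$, there need be no $u$-type integer $u t^m$ inside the interval at all. One is therefore forced to find a collection of integers in a short interval near $n$ whose combined $m$-th-power-free signature cancels against $u$, a genuine problem about the distribution of integers with prescribed $m$-free part (equivalently, of suitably smooth numbers) in short intervals. I expect the proof to split on the size of $u'$ relative to $u$: a range where $u'$ is large enough that a short Selfridge-type chain provably fits, and a complementary range where uniform control over such configurations is not currently available. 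It is exactly this uniformity that seems to require input beyond the elementary methods of the paper, and is the reason the statement is posed as a conjecture.
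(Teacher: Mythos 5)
This statement is a conjecture in the paper; the authors offer no proof, only the informal remark that $T'_m(n)=2$ forces $n$ to be closely followed by an integer with a matching $m$-free part, plus the examples of Lemma \ref{lemma:18} showing that for $m\geq 3$ the exceptional set is nonempty. So there is no proof to compare yours against, and the relevant question is whether your proposal actually closes the conjecture. It does not, and you say as much. What you do establish is correct and worthwhile: the equivalence $T_m(n)=T'_m(n)=2\iff g_m(n)=h_m(n)>n$ is sound (the forward direction uses Lemma \ref{lemma:prime1} and Observation \ref{lemma:clear} exactly as you indicate, and the backward direction is immediate since $\{n,h_m(n)\}$ is always an $m$-product sequence), and your $m=2$ computation is complete: for non-square $n=uv^2$ with $u\geq 2$ squarefree, Lemma \ref{lemma:ab} applied to $n=v\cdot uv$ gives $g_2(n)\leq (v+1)(uv+1)<u(v+1)^2=h_2(n)$, so no $n$ has $T_2(n)=2$. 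This recovers the Kagey--Rajesh result cited in the paper by a clean route, and the reduction via $h_m$ sharpens the paper's heuristic into a precise short-interval statement.

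The gap is everything with $m\geq 3$: for no such $m$ does the proposal produce the required $m$-product sequence inside $(n,h_m(n))$ for all large $n$, and you correctly identify why — the interval has length $O(n^{1-1/m})$ and one must find integers there whose combined $m$-free signatures cancel $u^{r_1}$ for some $1\leq r_1\leq m-1$, a distribution-in-short-intervals problem with no analogue of the single four-term Selfridge chain that works for $m=2$. Two further points to keep in mind if you pursue this. First, for $m\geq 3$ the conjecture asserts finiteness, not emptiness (the paper exhibits $T_3(2)=T'_3(2)=2$, $T_5(4)=T'_5(4)=2$, $T_3(12)=T'_3(12)=2$), so any general construction must be allowed to fail for bounded $n$; your restriction to ``all large $n$ in the range $h_m(n)\leq 2n$'' is the right target, and Theorem \ref{theorem:upper} does legitimately dispose of the complementary range $h_m(n)>2n$ for $n\geq 5$. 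Second, the constant implicit in $h_m(n)-n=O(v^{m-1})$ degrades as $u$ and $u'$ grow with $v$ bounded, so the case split you anticipate (on the size of $u'$ relative to $u$) is genuinely necessary and the uniformity problem is real. In short: a correct partial result and an accurate diagnosis of the obstruction, but not a proof of the conjecture for any $m\geq 3$.
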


We would expect it to be rare just for $T'_m(n)=2$ when $n$ is large, as it requires $n$ to be closely followed by an integer with the same set of prime divisors that are raised to powers not divisible by $m$. 

By Lemmas \ref{lemma:clear} and \ref{lemma:prime3}, our conjecture asserts that for each $m$, there are finitely many $n$ that satisfy $T_m(n)=2$ which are not $\frac{m}{2}$-th powers. 

It would be of interest to know, for fixed $m$, which values of $(T_m(n),T'_m(n))=(a,b)$ occur and if they occur for infinitely many $n$. 

\bigskip

\textbf{Acknowledgements:} We thank Peter Kagey for his encouragement and suggestions in writing this paper.

\bibliographystyle{plain}
\bibliography{Latex}

\begin{thebibliography}{1}

\bibitem{citation-key}
OEIS Foundation Inc. (2024), The On-Line Encyclopedia of Integer Sequences.

\bibitem{MR4706772}
Hung~M. Bui, Kyle Pratt, and Alexandru Zaharescu.
\newblock A problem of {E}rdős-{G}raham-{G}ranville-{S}elfridge on integral points on hyperelliptic curves.
\newblock {\em Math. Proc. Cambridge Philos. Soc.}, 176(2):309--323, 2024.

\bibitem{dubner1996large}
Harvey Dubner.
\newblock Large {S}ophie {G}ermain primes.
\newblock {\em Mathematics of computation}, 65(213):393--396, 1996.

\bibitem{Erdos}
Paul Erdős and J.~L. Selfridge.
\newblock 6655.
\newblock {\em The American Mathematical Monthly}, 99(8):791--794, 1992.

\bibitem{doi:10.1080/0025570X.1986.11977243}
Ronald Graham.
\newblock Problems.
\newblock {\em Mathematics Magazine}, 59(3):172, 1986.

\bibitem{hardy1923some}
Godfrey~H Hardy and John~E Littlewood.
\newblock Some problems of ‘{P}artitio numerorum’; {III}: {O}n the expression of a number as a sum of primes.
\newblock {\em Acta Mathematica}, 44(1):1--70, 1923.

\bibitem{kagey2024conjecture}
Peter Kagey and Krishna Rajesh.
\newblock On a {C}onjecture about {R}on {G}raham's {S}equence.
\newblock {\em arXiv preprint arXiv:2410.04728}, 2024.

\bibitem{efdab2f9-940e-3325-88c5-7e0949aa0d3d}
Michael Reid.
\newblock Bijection between {I}ntegers and {C}omposites.
\newblock {\em Mathematics Magazine}, 60(3):180, 1987.

\end{thebibliography}

\section{Appendix}

Let $a_1^{\{r_1\}},\ldots, a_t^{\{r_t\}}$ be a $m$-product sequence. Then $\prod_{i=1}^t a_i^{r_i}$ is an $m$-th power. Equivalently, for any prime $p$ such that $p|a_i$ for some $1\leq i\leq t$, we have $p^{km}||\prod_{i=1}^t a_i^{r_i}$ for some $k\in\mathbb{N}$. Let $q_1,\ldots,q_s$ be the set of primes that divide $\prod_{i=1}^t a_i^{r_i}$ and so let $\prod_{i=1}^t a_i^{r_i}=\prod_{j=1}^s q_j^{e_j}$. Then $e_j\equiv 0\mod m$ for all $1\leq j\leq s$. Let $A_j=\{a_i\mid 1\leq i\leq t, q_j|a_i\}$ and if $q_j^l||a_i$, define $v_{q_j}(a_i)=l$. Then let $v(A_j)=\sum_{a_i\in A_j}v_{q_j}(a_i)r_i$. Then we see that $v(A_j)=e_j\equiv 0\mod m$. It is clear that $\prod_{i=1}^t a_i^{r_i}$ is an $m$-th power if and only if $v(A_j)\equiv 0\mod m$ for all $1\leq j\leq s$.

So fix $m$ and consider $a_1=n, a_2=n+1,\ldots, a_t=k$ to be the integers in $[n,k]$. Then letting each $0\leq r_i\leq m-1$ be a variable, we can see whether there exists a solution $(r_1,\ldots,r_t)$. A solution exists with $1\leq r_1\leq m-1$ if and only if $g_m(n)\leq k$. We must have $r_1\geq 1$ as $n$ must be present in the sequence. 

Let $n=a_1^{\{r_1\}},\ldots,a_t^{\{r_t\}}=g_m(n)$ be a $m$-product sequence for $g_m(n)$ where $1\leq r_1\leq m-1$ and $0\leq r_i\leq m-1$ for $2\leq i\leq t$. Note that $m\nmid r_1$. If we say $r_i=m-20$ and $m<20$ then we mean $r_i\equiv m-20\mod m$ and $r_i\in\{0,\ldots,m-1\}$. Similarly, if we say $r_i=a$ for some $a\in\mathbb{N}$, then we mean $r_i\equiv a\mod m$ and $r_i\in\{0,\ldots,m-1\}$.

We include the results for $0\leq n\leq 7$ and $n=12,18,48$.

\begin{enumerate}
\setcounter{enumi}{-1}
\item $g_m(0)$: By Lemma \ref{lemma:m-th power}, $g_m(0)=0$ for all $m\geq 2$.

\item $g_m(1)$: By Lemma \ref{lemma:m-th power}, $g_m(1)=1$ for all $m\geq 2$.

\item $g_m(2)$: As $2$ is prime, by Lemma \ref{lemma:2p} it follows that $g_m(2)\geq 4$.

If $m=2$, by Lemma \ref{lemma:not m-th power} we do not need to include $4$, so $g_2(2)\geq 6$. As $2\cdot 3\cdot 6=6^2$, we see that $g_2(2)=6$.

If $m>2$ is odd, then $2\cdot 4^{\frac{m-1}{2}}=2^m$. If $m>2$ is even then $2^2\cdot 4^{\frac{m-2}{2}}=2^m$. So $g_m(2)=4$ for $m>2$.

\item $g_m(3)$: As $3$ is prime, by Lemma \ref{lemma:2p} we see that $g_m(3)\geq 6$. If $m>2$, then $3^2\cdot 4 \cdot 6^{m-2}=6^m$, so $g_m(3)=6$.

Let $m=2$. If $6$ is in the sequence then as $2||6$, there must be a distinct even term in the sequence. By Lemma \ref{lemma:not m-th power}, we ignore $4$ so $a_t\geq 8$. If $6$ is not in the sequence, then $a_t\geq 9$ as a multiple of $3$ other than $3$ is in the sequence. Therefore $g_2(3)\geq 8$. We see that $3\cdot 6\cdot 8=12^2$ so $g_2(3)=8$.

\item $g_m(4)$: 
By Lemma \ref{lemma:m-th power}, we have $g_m(4)=4$ if $2|m$. 

If $2\nmid m$, then by Lemma \ref{lemma:p} there must be a distinct even term in any $m$-product sequence starting with $4$. If $6$ is in the sequence, then by Lemma \ref{lemma:p}, we see that $a_t\geq 9$. If $6$ is not in the sequence then another even term must be included, so $a_t\geq 8$. Therefore $g_m(4)\geq 8$.

If $m>3$ and $2\nmid m$, then as $4^{\frac{m-3}{2}}\cdot 8=2^m$, we have $g_m(4)=8$.

If $m=3$ then by Lemma \ref{lemma:not m-th power}, we ignore $8$, so $g_3(4)\geq 9$. Then as $4\cdot 6\cdot 9=6^3$, we see that $g_3(4)=9$.

\item $g_m(5)$: By Lemma \ref{lemma:2p}, it follows that $g_m(5)\geq 10$. If $m$ is odd then $5\cdot 6\cdot 9^{\frac{m-1}{2}} \cdot 10^{m-1}=30^m$ and if $m$ is even then $5^{\frac{m}{2}}\cdot 8^\frac{m}{2}\cdot 10^\frac{m}{2}=20^m$. So $g_m(5)=10$.

\item $g_m(6)$: By Lemma \ref{lemma:p}, we see that $g_m(6)\geq 9$. 

As $6^6\cdot 8^{m-2}\cdot 9^{m-3}$ is an $m$-th power, we see that $g_m(6)=9$ if $m\neq 2,3,6$.

Let $m=2$. Then by Lemma \ref{lemma:not m-th power}, we ignore $9$. As we must include another multiple of $3$, we have $g_2(6)\geq 12$. As $6\cdot 8\cdot 12=24^2$, we see that $g_2(6)=12$.

Let $m=3$. Then by Lemma \ref{lemma:not m-th power}, we ignore $8$. By Lemma \ref{lemma:p}, we must include an even term greater than $6$. If we include $10$, then by \ref{lemma:p} we must have $a_t\geq 15$. If we do not include $10$ then $a_t\geq 12$ because we must include an even term distinct from $6$. As $6^2\cdot 9\cdot 12^2=36^3$, we see that $g_3(6)=12$. 

Let $m=6$. Assume that $g_6(6)\leq 11$. If we use $7, 10$ or $11$ in the $6$-product sequence then by Lemma \ref{lemma:p} we would have $a_t\geq 14$. So consider $a_1=6$, $a_2=8$ and $a_3=9$. Then $v(A_2)=r_1+3r_2$ and $v(A_3)=r_1+2r_3$. So if $v(A_2)\equiv 0\mod m$ then $3|r_1$ and if  $v(A_3)\equiv 0\mod m$ then $2|r_1$. Therefore if $6^{\{r_1\}},8^{\{r_2\}},9^{\{r_3\}}$ is a $6$-product sequence then $r_1\equiv 0 \mod 6$. But as $1\leq r_1\leq 5$, this is not possible. So $g_6(6)\geq 12$. As $6^2\cdot 9\cdot 12^2=6^6$, we have $g_6(6)=12$.

\item $g_m(7)$: By Lemma \ref{lemma:2p}, it follows that $g_m(7)\geq 14$. As $7^3\cdot 8\cdot 14^{m-3}=14^m$ and $7^2\cdot9\cdot 12\cdot 14=42^3$, we see that $g_m(7)=14$. 

\item $g_m(12)$: By Lemma \ref{lemma:p} we must have a distinct integer divisible by $3$ in the sequence, so $a_t\geq 15$. If we include $15$ then by Lemma \ref{lemma:p} we similarly must have $a_t\geq 20$. If we do not include $15$ then as we must include a multiple of $3$, we have $a_t\geq 18$. So $g_m(12)\geq 18$. When $m\neq 2,4,8$, we see that $12^{8}\cdot 16^{m-3}\cdot 18^{m-4}$ is a $m$-th power so $g_m(12)=18$.

Let $m=8$. If we include $13,14,15$ or $17$ we see by Lemma $\ref{lemma:p}$ that $g_8(12)\geq 20$. We have shown that $a_t\geq 18$ and if $g_8(12)=18$ then the sequence would have $a_1=12, a_2=16$ and $a_3=18$. Then $v(A_2)=2r_1+4r_2+r_3$ and $v(A_3)=r_1+2r_3$. If $v(A_2)\equiv 0\mod 8$ then $2|r_3$ and so if $v(A_3)\equiv 0\mod 8$ then $4|r_1$. Then $0\equiv v(A_2)\equiv 2r_1+4r_2+r_3\equiv 4r_2+r_3\mod 8$ so $4|r_3$ and it follows that $0\equiv v(A_3)\equiv r_1+2r_3\equiv r_1\mod 8$. So $r_1\equiv 0\mod 8$ which is a contradiction. Then $g_8(12)>18$. If $19$ is in the sequence then by Lemma \ref{lemma:p} we see that $a_t\geq 38$. So $a_t\geq 20$ and as $12\cdot15^7\cdot16\cdot20=30^8$, we have $g_8(12)=20$.

Let $m=4$. Then by Lemma \ref{lemma:divisible} we see that $g_4(12)\geq g_8(12)=20$ and as $12\cdot15^7\cdot16\cdot20=900^4$, we have $g_4(12)=20$.

Let $m=2$. Then by Lemma \ref{lemma:divisible} we see that $g_2(12)\geq g_8(12)=20$ and as $12\cdot15\cdot20=60^2$, we have $g_2(12)=20$.

\item $g_m(18)$: By Lemma \ref{lemma:p}, then the sequence must include a multiple of $2$ distinct from $18$. If $20$ appears in the sequence, then there must be another multiple of $5$ in the sequence, so $a_t\geq 25$. If $22$ appears in the sequence, then there must be another multiple of $11$ in the sequence, so $a_t\geq 33$. Therefore $a_t\geq 24$ and as $18^{\frac{2m}{5}}\cdot 24^{\frac{m}{5}}=6^m$ when $5|m$, we have $g_m(18)=24$ in this case.

Let $5\nmid m$. If $19,21,22$ or $23$ were in the sequence then $a_t\geq 28$. If $20$ was in the sequence then $a_t\geq 25$. So if $g_m(18)=24$ then $a_1=18$ and $a_2=24$. Then $v(A_2)=r_1+3r_2$ and $v(A_3)=2r_1+r_2$. If $v(A_2)\equiv 0\mod m$ then $0\equiv 2v(A_2)\equiv 2r_1+6r_2\mod m$. Then if $v(A_3)\equiv 0\mod m$ then $0\equiv 2v(A_2)-v(A_3)\equiv 5r_2\mod m$. Then as $5\nmid m$, we have $r_2\equiv 0\mod m$ and so $r_1\equiv 0\mod m$. This is a contradiction so $g_m(18)\geq 25$. As $18^4\cdot 20^{10}\cdot 24^{m-8}\cdot 25^{m-5}$ is a $m$-th power, we see that $g_m(18)=25$ when $m\neq 2,4$ and $5\nmid m$. 

Let $m=4$. If $g_4(18)=25$ then the sequence would have to be $a_1=18$, $a_2=20$, $a_3=24$ and $a_4=25$. So $v(A_2)=r_1+2r_2+3r_3$, $v(A_3)=2r_1+r_3$ and $v(A_5)=r_2+2r_4$. If $v(A_5)\equiv 0\mod 4$ then $2|r_2$. Then if $v(A_2)\equiv 0\mod 4$ it follows that $0\equiv r_1+2r_2+3r_3\mod 4\equiv r_1+3r_3\mod 4$. If $v(A_3)\equiv 0\mod 4$ then $2|r_3$ and as $r_1+3r_3\equiv 0\mod 4$, we see that $2|r_1$. Then $0\equiv 2r_1+r_3\equiv r_3\mod 4$ and so $0\equiv r_1+3r_3\equiv r_1\mod 4$. This is a contradiction, so $g_4(18)\geq 26$. If $26$ is in the sequence then $a_t\geq 39$. Therefore $g_4(18)\geq 27$ and as $18^2\cdot 24^2\cdot 27^2=108^4$, it follows that $g_4(18)=27$. By Lemma \ref{lemma:divisible} we see that $g_2(18)\geq g_4(18)=27$ and as $18\cdot 24\cdot 27=108^2$, it follows that $g_2(18)=27$.

\item $g_m(48)$: By Lemma \ref{lemma:p}, we see that there must be a multiple of $3$ greater than $48$ in the sequence. If $51=3\cdot17$ is in the sequence, then $a_t\geq 68=4\cdot17$. So $g_m(48)\geq 54$ and as $48^\frac{m}{11}\cdot54^\frac{7m}{11}$ is a $m$-th power, we see that $g_m(48)=54$ if $11|m$. As $48^\frac{m}{2}\cdot50^\frac{m}{2}\cdot54^\frac{m}{2}$ is a $m$-th power, we see that $g_m(48)=54$ if $2|m$. 

Let $11\nmid m$ and $2\nmid m$. Assume $g_m(48)=54$. If $51,52$ or $53$ are in the sequence then $a_t\geq 65$. So the sequence is $a_1=48$, $a_2=49$, $a_3=50$ and $a_4=54$. Then $v(A_2)=4r_1+r_3+r_4$, $v(A_3)=r_1+3r_4$, $v(A_5)=2r_3$ and $v(A_7)=2r_2$. If $v(A_5)\equiv 0\mod m$, then as $2\nmid m$ we see that $r_3\equiv 0\mod m$. If $v(A_2)\equiv 0\mod m$, then $0\equiv 3v(A_2)\equiv 12r_1+3r_4\mod m$. If $v(A_3)\equiv 0\mod m$, then $0\equiv 12r_1+3r_4-v(A_3)\equiv 11r_1\mod m$. As $11\nmid m$, this means $r_1\equiv 0\mod m$ which is a contradiction. Therefore $g_m(48)\geq 55$. If $55$ is in the sequence, then $a_t\geq 66$. Therefore $g_m(48)\geq 56$ and as $48^{18}\cdot49^{11}\cdot54^{m-6}\cdot56^{m-22}$, we see that $g_m(48)=56$ if $11\nmid m$, $2\nmid m$ and $m\neq 3,9$.

Let $m=3$. Assume $g_3(48)=56$. If 50 is in the sequence, then we need to include another multiple of 5 by Lemma \ref{lemma:p}, so either 55 is in the sequence or $a_t\geq 60$. If 55 is in the sequence, then $a_t\geq 66$. If 51,52,53 or 55 are in the sequence, then $a_t\geq 65$. Therefore the sequence is $a_1=48$, $a_2=49$, $a_3=54$ and $a_4=56$. So $v(A_2)=4r_1+r_3+3r_4$, $v(A_3)=r_1+3r_3$ and $v(A_7)=2r_2+r_4$. If $v(A_3)\equiv 0\mod 3$, then $0\equiv r_1+3r_3\equiv r_1\mod 3$. This is a contradiction, so $g_3(48)\geq 57$. If $57,58$ or $59$ are in the sequence, then $a_t\geq 76$. So $g_3(48)\geq 60$, and as $48\cdot49\cdot50^2\cdot54^2\cdot56\cdot60^2$ is a cube, we see that $g_3(48)=60$. 

Let $m=9$. Assume $g_9(48)=56$. By the above, the sequence is $a_1=48$, $a_2=49$, $a_3=54$ and $a_4=56$. So $v(A_2)=4r_1+r_3+3r_4$, $v(A_3)=r_1+3r_3$ and $v(A_7)=2r_2+r_4$. If $v(A_3)\equiv 0\mod 9$, then $3|r_1$ and so if $v(A_2)\equiv 0\mod 9$, we see that $3|r_3$. Then as $v(A_3)\equiv r_1+3r_3\equiv 0\mod 9$, it follows that $r_1\equiv 0\mod 9$. This is a contradiction, so $g_9(48)\geq 57$. If $57,58$ or $59$ are in the sequence, then $a_t\geq 76$. So $g_9(48)\geq 60$ and by Lemma \ref{lemma:divisible}, we see that $g_9(48)\leq g_3(48)=60$. Therefore $g_9(48)=60$. 
\end{enumerate}

\end{document}